\numberwithin{equation}{section}
\numberwithin{figure}{section}
\newtheorem{theorem}{Theorem}[section]
\newtheorem{proposition}[theorem]{Proposition}
\newtheorem{lemma}[theorem]{Lemma}{\Large }
\newtheorem{conjecture}[theorem]{Conjecture}
\theoremstyle{definition}
\newtheorem{definition}[theorem]{Definition}
\newtheorem{example}[theorem]{Example}
\newtheorem{remark}[theorem]{Remark}
\newtheorem{convention}[theorem]{Convention}
\definecolor{myblue}{rgb}{0.6, 0.9, 1}
\newcommand{\Rmnum}[1]{\expandafter\@slowromancap\romannumeral #1@}
\definecolor{myblue}{rgb}{0.6, 0.9, 1}
\definecolor{mygreen}{rgb}{0,0,1}
\definecolor{purple}{rgb}{0.6,0.2,1}
\definecolor{orange}{rgb}{0.8,0,0.2}
\newcommand{\bC}{\mathbb{C}}
\newcommand{\bL}{\mathbb{L}}
\newcommand{\bZ}{\mathbb{Z}}
\newcommand{\bQ}{\mathbb{Q}}
\newcommand{\bP}{\mathbb{P}}
\newcommand{\R}{\mathbb{R}}
\newcommand{\A}{\mathbb{A}}
\newcommand{\ord}{\operatorname{ord}}
\newcommand{\del}{\partial}
\newcommand{\eps}{\varepsilon}
\newcommand{\Res}{\operatorname{Res}}
\newcommand{\Gal}{\operatorname{Gal}}
\newcommand{\kbar}{\overline{k}}
\newcommand{\Qbar}{\overline{\bQ}}
\newcommand{\Kbar}{\overline{K}}
\newcommand{\supp}{\operatorname{supp}}
\newcommand{\cS}{\mathcal{S}}
\begin{document}
\title[Variation of heights for Fatou points]{Variation of canonical height for Fatou points on $\bP^1$}

\author{Laura DeMarco} 
\address{Department of Mathematics, Harvard University, 1 Oxford Street, Cambridge, MA 02138, USA}
\email{\href{mailto:demarco@math.harvard.edu}{demarco@math.harvard.edu}}

\author{Niki Myrto Mavraki}
\address{Department of Mathematics, Harvard University, 1 Oxford Street, Cambridge, MA 02138, USA}
\email{\href{mailto:mavraki@math.harvard.edu}{mavraki@math.harvard.edu}}
\date{October 4, 2022}

\subjclass[2020]{Primary 11G50, 37P50, 37F10; Secondary 37P40, 37P45}

\begin{abstract}
Let $f: \bP^1\to \bP^1$ be a map of degree $>1$ defined over a function field $k = K(X)$, where $K$ is a number field and $X$ is a projective curve over $K$.   For each point $a \in \bP^1(k)$ satisfying a dynamical stability condition, we prove that the Call-Silverman canonical height for specialization $f_t$ at point $a_t$, for $t \in X(\Qbar)$ outside a finite set, induces a Weil height on the curve $X$;  i.e., we prove the existence of a $\bQ$-divisor $D = D_{f,a}$ on $X$ so that the function $t\mapsto \hat{h}_{f_t}(a_t) - h_D(t)$ is bounded on $X(\Qbar)$ for any choice of Weil height associated to $D$.  We also prove a local version, that the local canonical heights $t\mapsto \hat{\lambda}_{f_t, v}(a_t)$ differ from a Weil function for $D$ by a continuous function on $X(\bC_v)$, at each place $v$ of the number field $K$.  These results were known for polynomial maps $f$ and all points $a \in \bP^1(k)$ without the stability hypothesis \cite{Ingram:polyvariation, Favre:Gauthier:continuity}, and for maps $f$ that are quotients of endomorphisms of elliptic curves $E$ over $k$ and all points $a \in \bP^1(k)$ \cite{Tate:variation, Silverman:VCHII}.  Finally, we characterize our stability condition in terms of the geometry of the induced map $\tilde{f}: X\times \bP^1 \dashrightarrow X\times \bP^1$ over $K$; and we prove the existence of relative N\'eron models for the pair $(f,a)$, when $a$ is a Fatou point at a place $\gamma$ of $k$, where the local canonical height $\hat{\lambda}_{f,\gamma}(a)$ can be computed as an intersection number.  
\end{abstract}
\maketitle

\thispagestyle{empty}

\bigskip
\section{Introduction}

In this article, we study the variation of canonical height in families of maps $f: \bP^1 \to \bP^1$.  More precisely, we fix a number field $K$ and a smooth projective curve $X$ defined over $K$. Let $k = K(X)$ be the associated function field, and let $\Kbar$ denote an algebraic closure of $K$.  Any map $f: \bP^1\to \bP^1$ of degree $d$ defined over $k$ will specialize to a morphism $f_t : \bP^1\to \bP^1$ of degree $d$, defined over $\Kbar$, for all but finitely many $t \in X(\Kbar)$.  For points $a \in \bP^1(k)$, we are interested in properties of the function $t \mapsto \hat{h}_{f_t}(a_t)$, where $\hat{h}_{f_t}$ is the Call-Silverman canonical height for $f_t$ as defined in \cite{Call:Silverman}, as $t$ varies in $X(\Kbar)$.

An important case was studied in the early 1980s.  Given any elliptic surface $E \to X$ with a zero section, defined over a number field $K$, and given a section $P: X\to E$ also defined over $K$, the fiber-wise canonical height $t\mapsto \hat{h}_{E_t}(P_t)$ is known to define a Weil height on the base curve $X(\Kbar)$ \cite{Tate:variation}.  That is, there exists a $\bQ$-divisor $D_{E,P}$ on $X$, of degree equal to the geometric canonical height $\hat{h}_E(P)$ (viewing $E$ as an elliptic curve over the function field $k$) so that 
\begin{equation}  \label{Tate VCH}
	\hat{h}_{E_t}(P_t) - h_{D_{E,P}}(t) = O(1)
\end{equation}
for any choice of Weil height associated to $D_{E,P}$.  The notation $O(1)$ represents a bounded function, defined on the complement of finitely many points in $X(\Kbar)$; the bound depends on the pair $(E,P)$ and the choice of Weil height $h_{D_{E,P}}$.  This can be viewed as a dynamical example on $\bP^1$ as follows.  Projecting each smooth fiber $E_t$ to $\bP^1$ by the natural degree-two quotient that identifies a point $x \in E_t$ with its inverse $-x$, and taking, for example, the multiplication-by-2 endomorphism on $E_t$, we obtain a family of maps $f_t: \bP^1 \to \bP^1$, well-defined for all but finitely many $t \in X(\Kbar)$.  See, for example, \cite[\S6.4]{Silverman:dynamics}.  The section $P$ projects to an element $p \in \bP^1(k)$, and we have $\hat{h}_{f_t}(p_t) = 2 \, \hat{h}_{E_t}(P_t)$, so that 
\begin{equation} \label{Tate dyn VCH}
	\hat{h}_{f_t}(p_t) -  h_{D_{f,p}}(t) = O(1)
\end{equation}
on the complement of finitely many points in $X(\Kbar)$, for a $\bQ$-divisor $D_{f,p} = 2\, D_{E,P}$ on $X$.  

For any given map $f: \bP^1\to \bP^1$ defined over $k$ of degree $ > 1$, and each point $a \in \bP^1(k)$, Call and Silverman proved that the specializations satisfy
\begin{equation} \label{CS bound}
	\hat{h}_{f_t}(a_t) - h_D(t) = o(h_D(t))
\end{equation}
as $h_D(t) \to \infty$, for any choice of Weil height $h_D$ on $X(\Kbar)$ associated to a divisor $D$ of degree equal to the geometric (i.e., over $k$) canonical height $\hat{h}_f(a)$ \cite[Theorem 4.1]{Call:Silverman}.   Recently, Ingram improved the error term $o(h_{D}(t))$ in \eqref{CS bound} to $O(h^{2/3}_{D}(t))$ \cite{Ingram:specialization}. Inspired by \eqref{Tate dyn VCH} and \eqref{CS bound}, Call and Silverman asked if there can exist a divisor $D = D_{f,a}$ on $X$ so that the stronger result of the form \eqref{Tate dyn VCH} will hold for every $f$ and $a$; see the Remark after Theorem 4.1 in \cite{Call:Silverman}. We give a partial answer to this question.

\begin{definition}
A point $a \in \bP^1(k)$ is said to be {\bf totally Fatou} for $f$ if it is an element of the non-archimedean Fatou set at every place $\gamma \in X(\Kbar)$ of $k$. 
\end{definition}

We refer the reader to Section \ref{Fatou section} for more information. 
We note here that throughout this article we identify the places of $k$ with those of $k\otimes \overline{K}$ and with the points $\gamma\in X(\overline{K})$. The notion of a totally Fatou point has also appeared in \cite{Petsche} in the setting of number fields.

\begin{theorem}  \label{dyn VCH}
Let $K$ be a number field and $X$ a smooth projective curve over $K$.  Let $f : \bP^1\to \bP^1$ be a map of degree $>1$ defined over $k = K(X)$, and suppose that $a \in \bP^1(k)$ is totally Fatou for $f$.  Then there exists a $\bQ$-divisor $D = D_{f,a}$ on $X$, of degree equal to the geometric height $\hat{h}_f(a)$, so that $t\mapsto \hat{h}_{f_t}(a_t)$ defines a Weil height for $D$ on $X(\Kbar)$.  More precisely, for any choice of Weil height $h_D$ associated to $D$, we have 
	$$\hat{h}_{f_t}(a_t) - h_D(t) = O(1)$$
as a function of $t\in X(\Kbar)\setminus Y$ for a finite set $Y$, where $\hat{h}_{f_t}(a_t)$ is well defined. The bounds on $\hat{h}_{f_t}(a_t) - h_D(t)$ depend on $f$, $a$, and the choice of Weil height $h_D$.  
\end{theorem}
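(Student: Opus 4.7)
The plan is to derive Theorem \ref{dyn VCH} from the local statement promised in the abstract. For each place $v$ of $K$, I would produce a $\bQ$-divisor $D = D_{f,a}$ on $X$ and a continuous function $\phi_v$ on $X(\bC_v)$ such that $\hat\lambda_{f_t,v}(a_t) = \lambda_{D,v}(t) + \phi_v(t)$, with $\phi_v \equiv 0$ for all but finitely many $v$. Summing the standard local-global decomposition of the canonical height over places of $K$ then gives $\hat h_{f_t}(a_t) - h_D(t) = \sum_v n_v\, \phi_v(t)$; because each remaining $\phi_v$ is bounded by continuity on the compact Berkovich analytification $X_v^{\mathrm{an}}$ after filling in the degenerate fibers, the right-hand side is $O(1)$ outside a finite set $Y \subset X(\Kbar)$.

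The preperiodic case is trivial: if $f^m(a) = f^n(a)$ over $k$ for some $m > n$, specialization yields $f_t^m(a_t) = f_t^n(a_t)$ wherever defined, so $\hat h_{f_t}(a_t) = 0$ and one takes $D = 0$. For the globally Fatou case, I construct $D$ from the geometry of the rational self-map $\tilde f: X\times \bP^1 \dashrightarrow X\times \bP^1$. Resolving the indeterminacy on a suitable modification $Y\to X\times \bP^1$, the dynamical asymptotics of $(\tilde f^n)^*\mc{O}_{\bP^1}(1)/d^n$ produce, in the limit, a $\bQ$-line bundle on the surface; pulling back along the section $\sigma_a : X \hookrightarrow X\times \bP^1$ defines $D$, whose degree is forced to equal $\hat h_f(a)$ by the Call--Silverman estimate \eqref{CS bound}.

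The heart of the proof is the local theorem. At archimedean $v$, continuous variation of the escape-rate Green function $G_{f_t,v}$ on the complement of the Julia set, combined with the globally Fatou hypothesis (which forces $a_t$ to stay bounded away from the Julia set of $f_t$ as $t$ approaches any $\gamma\in X(\Kbar)$), gives locally uniform convergence of the telescoping series defining $\hat\lambda_{f_t,v}(a_t)$, hence $\phi_v$ is continuous. At non-archimedean $v$, for all but finitely many $v$ the pair $(f,a)$ has good reduction and $\phi_v\equiv 0$; at the bad places, I would use the relative N\'eron model for $(f,a)$ at Fatou places promised in the abstract to express $\hat\lambda_{f_t,v}(a_t)$ as an intersection number $\sigma_{a_t} \cdot \mc{F}_v$ on a fixed model, which then varies continuously in $t \in X(\bC_v)$ by a standard intersection-theoretic argument on surfaces.

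The principal obstacle is the non-archimedean analysis at places of bad reduction. Without a Fatou hypothesis, the local escape rate of $a_t$ can jump discontinuously as $t$ approaches $\gamma$, which is precisely what restricts the general Call--Silverman bound \eqref{CS bound} to $o(h_D)$; no finite sum of Weil functions can absorb such a jump. The role of the Fatou condition is to guarantee existence of the relative N\'eron model, on which the local canonical height becomes an intersection number and is therefore automatically continuous in the parameter. This construction plays the same role here as the classical N\'eron model does in Tate's elliptic variation of canonical height, and executing it carefully---together with verifying that the continuous local defects $\phi_v$ assemble globally into a bounded function---is the technical crux on which the rest of the proof rests.
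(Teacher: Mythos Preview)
Your overall architecture---derive the global statement from the local one by summing the continuous defects $\phi_v$ over places of $K$---matches the paper's, but there is a genuine gap in the summation step. You assert that $\phi_v \equiv 0$ for all but finitely many $v$; this is \emph{false} for general rational maps. The paper points this out explicitly (see the paragraph after Theorem~\ref{local dyn VCH} and Remark~\ref{quasi adelic}): for $f(z) = z(z+1)/(z+t)$ over $\bQ(t)$ with $a=1$, one has $V_p(0)=\alpha_p<0$ for \emph{every} prime $p$. The phenomenon is that as $t$ approaches a degeneration point $\gamma\in\cS(F,A)$ in the $v$-adic topology, the specialized iterates $(A_n)_\gamma$ can acquire $v$-adic content at arbitrarily many places $v$, so the continuous extension of $V_v$ across $\gamma$ is nonzero at infinitely many $v$. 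Your ``good reduction of $(f,a)$ at $v$'' does not prevent this, because it says nothing about what happens when $t$ wanders $v$-adically into a neighbourhood of $\gamma$.

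What the paper actually does to close this gap is prove a quantitative summable bound: Proposition~\ref{alpha} shows that the values $\alpha_v := V_v(\gamma)$ form an $M_K$-\emph{quasi}constant (i.e.\ $\sum_v N_v|\alpha_v|<\infty$, not that almost all vanish), and Proposition~\ref{good bound on g_v} upgrades this to $|V_v(t)|\le (d\,\ord_\gamma\Res F+1)\,|\alpha_v|$ on an $M_K$-neighbourhood of $\gamma$ for all $v$ outside a finite set. Only with this uniform-in-$t$, summable-in-$v$ control can one conclude that $\sum_v N_v V_v(t)$ is bounded on $X(\Kbar)$. Two further confusions to flag: (i) the globally Fatou hypothesis concerns the non-archimedean Julia sets of $f$ at places $\gamma$ of $k$, not the complex Julia sets of $f_t$, so your archimedean sketch (``$a_t$ stays away from the Julia set of $f_t$'') is not the right mechanism---the paper instead proves uniform convergence of the telescoping series near each $\gamma$ via the hole-avoiding reduction (Theorem~\ref{B point} and Theorem~\ref{uniform convergence}); (ii) the relative N\'eron model in the paper (Proposition~\ref{relative Neron}) computes the \emph{geometric} local height $\hat\lambda_{f,\gamma}(a)$ at places of $k$ and is used to show $D$ is a $\bQ$-divisor, not to control the arithmetic local heights $\hat\lambda_{f_t,v}(a_t)$ at places of $K$.
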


We shall see that the divisor $D$ is given by 
\begin{equation} \label{divisor def}
	D_{f,a} = \sum_{\gamma \in X(\Kbar)} \hat{\lambda}_{f,\gamma}(a) \, \cdot \gamma 
\end{equation}
where $\hat{h}_f = \sum_\gamma \hat{\lambda}_{f,\gamma}$ is a local decomposition of the geometric canonical height for $f$ over $k$.  The fact that $D$ is a $\bQ$-divisor for totally Fatou points $a$, so that $\hat{\lambda}_{f,\gamma}(a) \in \bQ$ and therefore also $\hat{h}_f(a) \in \bQ$, is new; see Proposition \ref{rational}, addressing a question in \cite{DG:rationality}.  As a special case of Theorem \ref{dyn VCH} we recover \eqref{Tate VCH} and \eqref{Tate dyn VCH}, because all points in $\bP^1(k)$ are totally Fatou for the maps $f$ coming from elliptic curves.

The statement of Theorem \ref{dyn VCH} was proved by Ingram for polynomial maps $f(z) \in k[z]$ and for all points $a \in \bP^1(k)$ without the totally Fatou assumption \cite{Ingram:polyvariation}.  Polynomial maps have a totally invariant super-attracting fixed point at $\infty$, simplifying computations of the canonical height.   In fact, much more is known for polynomials $f$ and for maps $f$ coming from elliptic curves, and we address some of this  below in the context of Theorem \ref{local dyn VCH}; see the works of Favre and Gauthier \cite{Favre:Gauthier:continuity,FG:book} and of Silverman \cite{Silverman:VCHI,Silverman:VCHII,Silverman:VCHIII}.  However, even with the totally Fatou assumption, new complications arise for rational maps that do not exist for polynomials or maps coming from elliptic curves, as we discuss after Theorem \ref{local dyn VCH} and illustrate by example in Section \ref{example section}.

\medskip\noindent{\bf The totally Fatou condition.}
In contrast with the setting of number fields, it may be true that {\em every} point $a\in \bP^1(k)$ is either preperiodic or totally Fatou for maps $f$ defined over $k$.  (Note that the statement of Theorem \ref{dyn VCH} holds trivially when $a$ is preperiodic for $f$, as $\hat{h}_{f_t}(a_t) = 0$ at all points $t$ where $f_t$ is defined, and we can take $D = 0$.)  We know of no examples, nor any mechanisms to prove existence, of maps $f$ defined over $k$ and points $a \in \bP^1(k)$ with infinite orbit for which $a$ lies in the non-archimedean Julia set of $f$ at a place $\gamma$ of $k$. 

\begin{conjecture} \label{conj}
Let $K$ be a number field and $X$ a smooth projective curve over $K$.  Let $f: \bP^1\to \bP^1$ of degree $>1$ be defined over $k=K(X)$.  Then every point $a\in\bP^1(k)$ is either preperiodic or totally Fatou for $f$.  
\end{conjecture}

Note that the conjecture remains open for polynomial maps $f$, though the conclusion of Theorem \ref{dyn VCH} is known to hold for all points $a \in \bP^1(k)$ in that case \cite{Ingram:polyvariation}.  In Section \ref{example section}, we observe that for all of the previously known cases of Theorem \ref{dyn VCH} in the literature where the maps $f$ are {\em not} polynomials (nor conjugate to polynomials), the points $a \in \bP^1(k)$ are totally Fatou for $f$.

Here we prove that ``most" points in $\bP^1(k)$, from a density point of view, are totally Fatou.  Let $k_\gamma$ denote the completion of $k$ at the place $\gamma \in X(\Kbar)$.

\begin{theorem} \label{dense}
For any $f: \bP^1 \to \bP^1$ of degree $>1$ defined over $k = K(X)$, the set of totally Fatou points for $f$ in $\bP^1(k)$ is open and dense in the product topology on $\bP^1(k)$, coming from the embedding of $k$ into $\prod_{\gamma \in X(\Kbar)} k_\gamma$.  
\end{theorem}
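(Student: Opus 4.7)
The plan is to reduce the global Fatou condition to a finitary condition at only the finitely many bad-reduction places of $f$, and then to apply weak approximation for the function field $k = K(X)$. Concretely, let $S \subset X(\Kbar)$ denote the finite set of places at which $f$ has bad reduction. At each $\gamma \notin S$ the Berkovich Julia set is empty and hence $F_\gamma = \bP^1(k_\gamma)$, so a point $a \in \bP^1(k)$ is globally Fatou precisely when $a_\gamma \in F_\gamma$ for every $\gamma \in S$.

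Openness is then immediate: each $F_\gamma$ with $\gamma \in S$ is open in $\bP^1(k_\gamma)$, so the globally Fatou locus in $\bP^1(k)$ is cut out by finitely many open conditions in the product topology. For density, given a non-empty basic open $V \subset \bP^1(k)$ determined by constraints $a_\gamma \in U_\gamma$ at a finite set $T$ of places, I would enlarge $T$ to $T \cup S$ and seek $a \in \bP^1(k)$ with $a_\gamma \in U_\gamma$ for $\gamma \in T$ and $a_\gamma \in F_\gamma$ for every $\gamma \in S$. Such an $a$ exists given (i) the density of $F_\gamma \cap \bP^1(k_\gamma)$ in $\bP^1(k_\gamma)$ at each $\gamma \in S$, so that the opens $U_\gamma \cap F_\gamma$ (for $\gamma \in T \cap S$) and $F_\gamma$ (for $\gamma \in S \setminus T$) are non-empty, and (ii) weak approximation for the function field $k = K(X)$, which supplies a point of $\bP^1(k)$ hitting prescribed non-empty opens at the finitely many places in $T \cup S$.

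The main obstacle will be (i). Although the Berkovich Julia set $J_\gamma$ is closed with empty interior in Berkovich $\bP^1$ over $k_\gamma$, the space $\bP^1(k_\gamma)$ is typically not dense in the Berkovich analytic space (for instance, the open subset of Berkovich $\bP^1$ over $\bQ_p$ cut out by $|z^2+1|<1/p$ for $p \equiv 3 \pmod{4}$ meets no $\bQ_p$-rational point), so the desired density does not follow by a purely Berkovich topological argument. To bridge this I would use the multiplier identity $\sum_{i}(1-\la_{i})^{-1}=1$ at the $d+1$ fixed points of $f$: combined with the ultrametric inequality this forces at least one non-repelling fixed point and hence an open Fatou basin in $\bP^1(\bC_\gamma)$, and combining this with the $\Gal(\overline{k_\gamma}/k_\gamma)$-invariance of $F_\gamma$ and pullbacks by the iterates $f^n$, one should rule out $J_\gamma$ containing any $k_\gamma$-rational open disc. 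Alternatively, (i) can be deduced from the geometric characterization of the Fatou condition developed later in the paper in terms of the induced map $\tilde f: X\times \bP^1 \dashrightarrow X\times \bP^1$: the sections whose iterates meet the finite indeterminacy loci of $\tilde f^n$ over places in $S$ form a thin family that can be avoided by small weak-approximation perturbations in $\bP^1(k)$.
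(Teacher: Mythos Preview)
Your overall framework is correct and matches the paper: reduce to the finite set $S$ of bad-reduction places, note that openness is immediate, and use weak approximation for density once you know $F_\gamma \cap \bP^1(k_\gamma)$ is dense in $\bP^1(k_\gamma)$ for each $\gamma \in S$. You also correctly identify this local density statement (your item (i)) as the crux.

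However, neither of your two approaches to (i) closes the gap. Approach (a), via the multiplier identity and the existence of a non-repelling fixed point, produces a non-empty Fatou basin in $\bP^1(\bC_\gamma)$ but says nothing about density of Fatou points in the much smaller set $\bP^1(k_\gamma)$; ``Galois invariance and pullbacks by $f^n$'' does not bridge this, since preimages of a Fatou basin accumulate on the Julia set only in the Berkovich topology, not a priori along $k_\gamma$-rational points. Decisively, your argument (a) uses nothing specific to function fields and would apply verbatim to a rational map over a $p$-adic field, where the analogue of (i) is \emph{false} (as the paper notes immediately after the theorem). So (a) cannot be completed without input particular to the function-field setting. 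Approach (b) also fails: being Julia at $\gamma$ is not the condition that some iterate section meets a fixed finite indeterminacy locus, but rather that for \emph{every} modification $Y$ some iterate meets $I(\tilde f_Y)$; the Julia locus is not a thin family of sections in any sense that weak approximation can avoid.

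The paper's proof supplies exactly the missing arithmetic ingredient. Fixing a $k$-split Type II point $\zeta$ near a given $b\in\bP^1(k)$, the directions from $\zeta$ are identified with $\bP^1(\Kbar)$, and the ones containing $k$-rational points (``$k$-disks'') correspond to $\bP^1(K)$. If only finitely many $k$-disks meet the Julia set, there are infinitely many $k$-disks entirely in the Fatou set, and we are done. If infinitely many meet the Julia set, then $\zeta$ itself is Julia and hence preperiodic; the reduction of a suitable iterate $f^n$ at the periodic image of $\zeta$ is a self-map $g$ of $\bP^1$ defined over the \emph{number field} $K$. If $\deg g>1$, the Julia $k$-directions lie in $\bigcup_{j\ge 0} g^{-j}(E)$ for a finite $E\subset\bP^1(\Kbar)$, so those in $\bP^1(K)$ have bounded Weil height and are finite by Northcott, a contradiction. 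If $\deg g=1$, a short lemma shows that the forward orbit under an automorphism of a finite set, pulled back by a fixed nonconstant map, cannot exhaust $\bP^1(K)$. This exploitation of the number-field residue field (equivalently, the non-local-compactness of $k_\gamma$) is precisely what separates the function-field case from the $p$-adic one, and it is absent from your sketch.
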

 
\noindent
Theorem \ref{dense} exploits the non-local-compactness of $k_\gamma$; it is false for maps $f$ defined over number fields $K$, where the Fatou set in a completion $K_v$ can fail to be dense at archimedean or non-archimedean places $v$.

To understand the totally Fatou condition better, we relate it to the geometry of the induced rational map 
	$$\tilde{f}:  X\times \bP^1 \dashrightarrow  X \times \bP^1$$ 
on the complex surface $X \times \bP^1$, defined by $(t,z) \mapsto (t, f_t(z))$. 
 Let $I(\tilde{f})$ denote the (finite) indeterminacy set of $\tilde{f}$ in $(X\times \bP^1)(\Kbar)$.  For a point $a \in \bP^1(k)$, let $C_a$ denote the graph in $X\times \bP^1$ of the associated holomorphic map $t \mapsto a(t)$ from $X$ to $\bP^1$.
 
\begin{theorem} \label{modification}
Let $f: \bP^1 \to \bP^1$ be of degree $ > 1$, defined over a function field $k = K(X)$, with the number field $K$ chosen so that all indeterminacy points of $\tilde{f}$ lie in $(X \times \bP^1)(K)$.  A point $a \in \bP^1(k)$ is totally Fatou for $f$ if and only if there exists a birational morphism $Y \to  X \times \bP^1$, defined over $K$, so that the induced map 
$$\xymatrix{ Y \ar[d] \ar@{-->}[r]^{\tilde{f}_Y} & Y\ar[d] \\   X\times\bP^1  \ar@{-->}[r]^{\tilde{f}} & X\times\bP^1  }$$	
 satisfies $C_{f^n(a)}^Y \cap I (\tilde{f}_Y ) = \emptyset$ for all $n\geq 0$, where $C_{f^n(a)}^Y$ is the proper transform of the curve $C_{f^n(a)}$ in $Y$.   Moreover, the modification $Y$ can be chosen so that $\tilde{f}_Y$ is algebraically stable, meaning that no curve is mapped by an iterate $(\tilde{f}_Y)^n$ into the indeterminacy set $I(\tilde{f}_Y)$, and such that $C_{f^n(a)}^Y$ intersects the singular fibers of the projection $Y \to X$ only at smooth points, for all $n\ge 0$.
\end{theorem}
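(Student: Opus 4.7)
The plan is to reduce to a local analysis at each ``bad'' place $\gamma\in X(\Kbar)$. Since $\tilde{f}$ has finite indeterminacy set (contained in fibers over $X(K)$ by hypothesis), only finitely many places require attention, namely the first-coordinate projection of $I(\tilde{f})$ together with places at which some $C_{f^n(a)}$ meets an indeterminacy point; at every other place the pair $(f,a)$ has good reduction and both directions of the theorem hold trivially. The argument proceeds one bad place at a time.

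For the $(\Leftarrow)$ direction, fix a model $Y$ satisfying the hypothesis and a bad place $\gamma$. The fiber of the composition $\pi_Y\colon Y\to X\times\bP^1\to X$ over $\gamma$ is a tree of rational curves defining an admissible SNC formal model of $\bP^1_{k_\gamma}$ over the local ring $\mathcal{O}_\gamma$; its components are in bijection with closed Berkovich disks in the Berkovich projective line $\bP^1_{\mathrm{Berk},k_\gamma}$. Since $C_{f^n(a)}^Y\cap I(\tilde{f}_Y)=\emptyset$ for all $n$, the map $\tilde{f}_Y$ is regular on an analytic neighborhood of the reduction of each orbit point. Pulling this regularity back through the reduction map produces a Berkovich neighborhood of $a$ on which the iterates $\{f^n\}$ are equicontinuous, placing $a$ in the Fatou set of $f$ at $\gamma$.

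For the $(\Rightarrow)$ direction, the Fatou hypothesis at a bad $\gamma$ produces a Berkovich neighborhood $U$ of $a$ on which $\{f^n\}$ is equicontinuous; in particular the orbit $\{f^n(a):n\geq 0\}$ is contained in a finite union of closed Berkovich disks $D_1,\dots,D_r$ bounded away from the Julia set. Each $D_i$ corresponds to a Type-II point of $\bP^1_{\mathrm{Berk},k_\gamma}$, and hence arises as a component of the special fiber of some SNC model obtained by a finite sequence of blowups of $X\times\bP^1$ over $\gamma$. Performing these blowups at every bad place yields a model $Y$ in which each reduction of $f^n(a)$ lands on the component associated to the $D_i$ containing it; since each $D_i$ avoids the Julia locus, it avoids $I(\tilde{f}_Y)$ as well, giving the required $C_{f^n(a)}^Y\cap I(\tilde{f}_Y)=\emptyset$ for all $n$.

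For the ``moreover'' statement, algebraic stability is obtained via the Diller--Favre construction: iteratively blow up any irreducible curve $E\subset Y$ contracted by some iterate $\tilde{f}_Y^n$ into $I(\tilde{f}_Y)$. Since $\tilde{f}$ has only finitely many exceptional curves and finite indeterminacy, this process terminates, and the extra blowups occur at base points disjoint from the orbit curves, preserving the avoidance property. A final round of blowups at the finitely many nodes where a $C_{f^n(a)}^Y$ meets a singular fiber achieves the ``smooth point'' condition. The main obstacle in the entire argument is showing that finitely many blowups at each $\gamma$ suffice to handle the infinitely many orbit curves $C_{f^n(a)}^Y$ simultaneously; this depends crucially on the Fatou hypothesis, which confines the orbit to a finite union of closed Berkovich disks, so that separating those disks from the Julia set in the model separates every orbit curve from indeterminacy at once.
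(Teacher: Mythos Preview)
Your overall architecture---localize at bad places, translate between fiber components and Type~II points, build the model from a finite vertex set---matches the paper's. But the central step of the $(\Rightarrow)$ direction is asserted rather than proved, and that step is the real content of the theorem.

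You write that the Fatou hypothesis ``confines the orbit to a finite union of closed Berkovich disks $D_1,\dots,D_r$ bounded away from the Julia set.'' Equicontinuity on a neighborhood of $a$ does \emph{not} give this directly. If $a$ lies in a wandering Fatou component, the iterates $f^n(a)$ visit infinitely many distinct components; one needs Benedetto's theorem to know these are eventually disks with a \emph{periodic} boundary point, so that finitely many Type~II points bound the whole tail. If $a$ lies in a Rivera domain, the orbit can be dense in a component that is not a disk at all, and one must identify a finite set of Type~II points (the retraction of the orbit to the periodic locus, as in the paper's case analysis) to serve as your vertices. The paper handles this via the Rivera--Letelier classification and a three-case analysis (attracting basin / wandering / Rivera), packaged as a separate ``stable modification'' theorem that produces a finite vertex set $\Gamma'$ with $(f,\Gamma')$ analytically stable and every $f^n(a)$ in an $F$-disk. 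Your last paragraph correctly identifies this as the main obstacle but then simply claims the Fatou hypothesis resolves it; that claim is the theorem.

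A second gap: ``since each $D_i$ avoids the Julia locus, it avoids $I(\tilde f_Y)$ as well.'' Indeterminacy points of $\tilde f_Y$ over $\gamma$ correspond to $J$-components for the vertex set $\Gamma$ (directions whose iterates hit $\Gamma$), not to the Julia set. A disk lying entirely in the Fatou set can still be a $J$-component if its boundary point later maps onto a vertex; avoiding the Julia set is not the right condition. You need $\Gamma$ chosen so that $(f,\Gamma)$ is analytically stable \emph{and} the orbit lies in $F$-disks, and these must be arranged simultaneously. This also undermines the sequential ``moreover'' argument: running Diller--Favre after the fact, and then blowing up nodes, can create new indeterminacy or new $J$-components meeting the (infinite) orbit, and you have not argued why it does not. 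The paper avoids this by building algebraic stability, indeterminacy-avoidance, and smoothness of intersections into a single construction of $\Gamma'$.
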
  

\begin{remark}
It was proved in \cite[Theorem E]{DF:degenerations2} that, for every $f$ of degree $>1$ over $k$, there exists a modification $Y \to X\times \bP^1$ so that the induced map $\tilde{f}_Y: Y \dashrightarrow Y$ is algebraically stable.  Theorem \ref{modification} implies we can further modify $Y$ so that the orbit of $C_a$ is disjoint from the indeterminacy locus of $\tilde{f}_Y$, when $a$ is totally Fatou.  The choice of $Y$ will depend on $a$.
\end{remark} 

We use Theorem \ref{modification} to prove that the geometric local canonical height $\hat{\lambda}_{f,\gamma}(a)$ can be computed as an intersection number in $Y$, assuming the point $a$ is Fatou at $\gamma$; see Theorem \ref{intersection computation} and compare with \cite[Theorem 6.1]{Call:Silverman}.  In analogy with the study of elliptic curves and abelian varieties, the concept of a ``weak N\'eron model" at a place $\gamma$ of $k$ was introduced in \cite{Call:Silverman} for dynamical systems; but, it is known that these models often fail to exist for maps $f: \bP^1\to \bP^1$ defined over $k_\gamma$ in the absence of good reduction, for example when there is a repelling periodic point in $k_\gamma$ \cite{Hsia:weak}. In fact, as Ingram noted in the Introduction to \cite{Ingram:polyvariation}, if $f: \bP^1\to \bP^1$ defined over $k$ is neither Latt\`es nor isotrivial, then it cannot have a weak N\'eron model at every place $\gamma$.  The proof of Theorem \ref{modification} provides the existence of a {\em relative} type of weak N\'eron model, for a pair $(f,a)$ with $a$ being Fatou at $\gamma$, in which the orbit of the Fatou point can be arranged to be integral.

Theorem \ref{modification} follows from the proof of \cite[Theorem D]{DF:degenerations2} and the classification of $\gamma$-adic Fatou components in the Berkovich projective line $\bP^{1,an}_\gamma$ (over a complete and algebraically closed field $\bC_\gamma$ containing the completion $k_\gamma$) \cite{RiveraLetelier:Asterisque} \cite{Benedetto:wandering} \cite[Appendix]{DF:degenerations2}; many of the ideas were already present in \cite{Hsia:weak}, and what remained was to show that the full orbit $\{f^n(a)\}_{n\geq 0}$ can be disjoint from the indeterminacy set after only finitely many blowups of $X\times \bP^1$.

\medskip\noindent{\bf Local version of Theorem \ref{dyn VCH}.}
In the setting of elliptic surfaces $E\to X$, Silverman strengthened Tate's result \eqref{Tate VCH} by showing that the function $B_{E,P}(t) := \hat{h}_{E_t}(P_t) - h_{D_{E,P}}(t)$, defined for all but finitely many $t\in X(\Qbar)$, can be expressed as a sum over all places $v$ of the number field $K$ of functions with good behavior \cite{Silverman:VCHI, Silverman:VCHII, Silverman:VCHIII}.  More precisely, he proved that the local height functions for $\hat{h}_{E_t}$ on $E_t(\Qbar)$ and for $h_{D_{E,P}}$ on $X(\Qbar)$ can be chosen so that all $v$-adic contributions to $B_{E,P}$ extend to define {\em continuous} functions on $X(\bC_v)$, even across the singular fibers, and that {\em all but finitely many} of the $v$-adic contributions are $\equiv 0$.

We also prove a local continuity result, strengthening the conclusion of Theorem \ref{dyn VCH}:

\begin{theorem} \label{local dyn VCH}
Under the hypotheses of Theorem \ref{dyn VCH}, we assume that the number field $K$ is extended so that $\supp D_{f,a} \subset X(K)$.  There are local decompositions 
	$$\hat{h}_{f_t}(a_t) = \frac{1}{[K:\bQ]} \frac{1}{|\Gal(\Kbar/K)\cdot t|} \sum_{x \in \Gal(\Kbar/K)\cdot t} \sum_{v \in M_K} N_v \, \hat{\lambda}_{f_x,v}(a_x)$$
and
	$$h_D(t) = \frac{1}{[K:\bQ]} \frac{1}{|\Gal(\Kbar/K)\cdot t|} \sum_{x\in \Gal(\Kbar/K)\cdot t} \sum_{v \in M_K} N_v \, \lambda_{D,v}(x)$$
for $t \in X(\Kbar)\setminus \supp D_{f,a}$ so that the function 
	$$V_v(t) := \hat{\lambda}_{f_t,v}(a_t) - \lambda_{D,v}(t)$$
extends to a continuous function on the Berkovich analytification $X^{an}_{\bC_v}$ for each place $v$ of $K$.  
\end{theorem}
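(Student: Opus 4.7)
My approach is to fix, for each place $v \in M_K$, compatible normalizations of the dynamical local canonical height $\hat{\lambda}_{f_t,v}$ and the Weil local height $\lambda_{D,v}$, and then verify continuity of the difference $V_v$ by analyzing three regimes: classical points $t \in X(\bC_v)$ away from $\supp D_{f,a}$, classical points in $\supp D_{f,a}$, and non-classical points of $X^{an}_{\bC_v}$. Once these local choices are made, the two displayed decompositions in the statement are the standard local-global decomposition of the Call-Silverman height and of the Weil height of $D_{f,a}$, together with the product formula on $K$.

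First, for each $v$ I would take $\hat{\lambda}_{f_t,v}$ to be the dynamical local height of $f_t$ constructed from the Tate-style escape-rate limit on $\bC_v$ (equivalently via the local Green function of $f_t$), and $\lambda_{D,v}$ the standard Weil function for $D_{f,a}$ built from a local uniformizer at each point of $\supp D_{f,a}\subset X(K)$. On the open set $X(\bC_v)\setminus \supp D_{f,a}$, $f_t$ is a genuine degree-$d$ map over $\bC_v$ depending continuously on $t$ and $a_t$ is well-defined, so the escape-rate sequence converges locally uniformly by standard dynamical theory (Favre--Rivera-Letelier in the non-archimedean case; classical complex dynamics in the archimedean case), making $t\mapsto \hat{\lambda}_{f_t,v}(a_t)$ continuous. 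Since $\lambda_{D,v}$ is tautologically continuous off $\supp D_{f,a}$, $V_v$ is continuous on $X(\bC_v)\setminus \supp D_{f,a}$.

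The heart of the argument is continuity of $V_v$ at each $\gamma\in \supp D_{f,a}$. With $u_\gamma$ a local uniformizer at $\gamma \in X(K)$, the Weil function has
\begin{equation*}
\lambda_{D,v}(t) \;=\; \hat{\lambda}_{f,\gamma}(a)\bigl(-\log|u_\gamma(t)|_v\bigr) \;+\; (\text{continuous near }\gamma),
\end{equation*}
so the task reduces to proving the matching dynamical asymptotic
\begin{equation*}
\hat{\lambda}_{f_t,v}(a_t) \;=\; \hat{\lambda}_{f,\gamma}(a)\bigl(-\log|u_\gamma(t)|_v\bigr) \;+\; \Phi_v(t)
\end{equation*}
for some $\Phi_v$ continuous at $\gamma$. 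This is where the globally Fatou (or preperiodic) hypothesis on $a$ enters. The preperiodic case reduces to $D=0$ and is automatic; in the Fatou case, Theorem \ref{modification} provides a birational $K$-model $Y\to X\times\bP^1$ on which $\tilde{f}_Y$ is algebraically stable, every iterated section $C_{f^n(a)}^Y$ avoids $I(\tilde{f}_Y)$, and each $C_{f^n(a)}^Y$ meets every singular fiber in smooth points. By Remark \ref{Neron} and the intersection-theoretic interpretation of $\hat{\lambda}_{f,\gamma}(a)$ in this relative N\'eron-type model, the function-field local height at $\gamma$ equals an intersection number in the special fiber of $Y$ over $\gamma$; transporting this computation to the $v$-adic analytic neighborhood of that fiber in $Y(\bC_v)$ produces exactly the asymptotic above with continuous remainder $\Phi_v$, because the avoidance of indeterminacy along the orbit keeps the iterates $f_t^n(a_t)$ $v$-adically bounded away from poles as $t\to\gamma$.

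Finally, to promote continuity from classical points to all of $X^{an}_{\bC_v}$, I would separately extend $\hat{\lambda}_{f_t,v}(a_t)$ and $\lambda_{D,v}(t)$ to the Berkovich analytification: the former via the Berkovich-analytic Green function of $\tilde{f}$, whose defining escape-rate limit converges locally uniformly on $X^{an}_{\bC_v}\times \bP^{1,an}_{\bC_v}$; the latter by the canonical extension of Weil functions on a smooth projective curve to its Berkovich analytification. Continuity of $V_v$ on the dense classical locus then forces continuity on $X^{an}_{\bC_v}$. The main obstacle I anticipate is the asymptotic at $\gamma$ carried out above: the intersection-theoretic identity from Theorem \ref{modification} and Remark \ref{Neron} lives at the function-field place $\gamma$ of $k$, and translating it into a uniform analytic statement for $t$ varying in a $v$-adic neighborhood of $\gamma$ in $X(\bC_v)$, with a genuinely continuous error, requires combining the algebraic stability of $\tilde{f}_Y$ with the fiberwise continuous variation of non-archimedean (and archimedean) Green functions in families.
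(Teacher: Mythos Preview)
Your outline is broadly reasonable away from $\supp D_{f,a}$, but the heart of the matter --- continuity of $V_v$ at each $\gamma\in\supp D_{f,a}$ --- is not actually proved in your proposal. You invoke Theorem~\ref{modification} and Remark~\ref{Neron} to say that the orbit avoids indeterminacy on a good model $Y$ and that $\hat\lambda_{f,\gamma}(a)$ is an intersection number there, and then assert that ``transporting this computation to the $v$-adic analytic neighborhood \ldots\ produces exactly the asymptotic above with continuous remainder.'' But the intersection computation produces a single rational number at the function-field place $\gamma$; it says nothing about how the \emph{arithmetic} escape rate $G_{F_t,v}(A_t)$ behaves as $t\to\gamma$ in $\bC_v$. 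Even when the orbit of $a$ avoids indeterminacy on $Y$, the homogeneous iterates $F_t^n(A_t)$ can shrink in $\|\cdot\|_v$ at rates depending jointly on $n$ and on $|u_\gamma(t)|_v$, and your claim that the iterates stay ``$v$-adically bounded away from poles'' does not control this. Standard results on continuous variation of Green functions in families apply only where $\Res F_t$ is bounded away from $0$, which is exactly what fails at $\gamma$; the example in \S\ref{example section} shows that without the Fatou hypothesis the sequence defining $V_v(\gamma)$ can diverge, so a genuine argument is needed here.

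The paper's proof takes a different and more concrete route. It does not use Theorem~\ref{modification}; instead it uses Theorem~\ref{B point} to reduce, after a coordinate change $B\in\mathrm{PGL}_2(k)$ and passage to iterates, to the case where the pair $(f,a)$ is \emph{hole-avoiding} at $\gamma$, i.e.\ lifts can be chosen with $\ord_\gamma F=\ord_\gamma A=0$ and $F_\gamma^n(A_\gamma)\neq(0,0)$ for all $n$. In that normalized situation it proves Theorem~\ref{uniform convergence}: the functions $g_{n,v}(t)=d^{-n}\log\|F_t^n(A_t)\|_v$ converge \emph{uniformly} on a disk around $\gamma$, including at $t=\gamma$. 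This rests on two nontrivial ingredients that your proposal does not supply: Proposition~\ref{alpha}, which shows the limit $\alpha_v=\lim_n g_{n,v}(\gamma)$ exists by exploiting the factorization $F_\gamma=H\,\hat F$ with $\deg\hat F<d$, and a Maximum-Principle bootstrapping argument that upgrades pointwise convergence at $\gamma$ to uniform convergence nearby. Once uniform convergence is in hand, continuity of $V_v$ across $\gamma$ (and its Berkovich extension) is immediate from \eqref{adjustment}--\eqref{iterate}. This is the missing analytic mechanism in your sketch.
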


\noindent
Here, $M_K$ denotes the set of places of the number field $K$, and the weights $N_v = [K_v: \bQ_v]$ are the same as those appearing in the product formula $1 = \prod_{v \in M_K} |\alpha|_v^{N_v}$ for $\alpha \in K^*$.   
The conclusion of Theorem \ref{local dyn VCH} is known for polynomial maps $f(z) \in k[z]$ and for all $a\in\bP^1(k)$ without the totally Fatou hypothesis \cite{Ghioca:Ye:cubics, Favre:Gauthier:continuity}; their proofs take clever advantage of the compactness of the orbit-closures of all points in the $\gamma$-adic Julia sets, as subsets of $\bP^1(k_\gamma)$ (see \cite[Theorem 3]{Favre:Gauthier:continuity}, \cite[Theorem 4.8]{Hsia:weak}, \cite[Proposition 6.7]{Trucco}), which does {\em not} hold for general rational maps $f$.  See, for example, the $f$ of \S\ref{divergent example}.  Moreover, even for totally Fatou points $a \in \bP^1(k)$, the proof of Theorem \ref{local dyn VCH} requires a new approach.  The local canonical height functions $\hat{\lambda}_{f_t,v}$ for polynomials $f$ can be normalized so they are always non-negative.  The challenge here is the absence of a uniform lower bound on the functions $V_v$ of Theorem \ref{local dyn VCH}, independent of $a$.  (This unboundedness was exploited in \cite{DO:discontinuity} to show $V_v$ can fail to extend continuously for maps $f(z) \in k(z)$ when a point $a \in \bP^1(k')$ is defined over a larger field such as $k' = K_v(X)$; see Remark \ref{DO}.)  

Finally, we remark that Theorem \ref{local dyn VCH} as stated does not imply Theorem \ref{dyn VCH}.  For polynomial maps $f$ and each $a \in \bP^1(k)$, the functions $V_v$ of Theorem \ref{local dyn VCH} will satisfy $V_v \equiv 0$ at all but finitely many places $v$ of $K$ \cite{Ingram:polyvariation}, as is the case for sections of elliptic surfaces \cite{Silverman:VCHIII}.  However, by contrast, it is {\em not} the case that the functions $V_v$ will be $\equiv 0$ for all but finitely many places $v$ for general rational maps $f$; there can be nontrivial contributions at infinitely many places, even for totally Fatou points $a \in \bP^1(k)$.  See the example of \S\ref{all primes}; such examples were studied in depth in \cite{Mavraki:Ye}.  Nevertheless, we extract the summability of the magnitudes of $V_v$, over all places $v$ of $K$, from the proof of Theorem \ref{local dyn VCH}.

\medskip\noindent{\bf Julia points.}
We use the totally Fatou hypothesis on $a \in \bP^1(k)$ in a crucial way in our proofs of Theorems \ref{dyn VCH} and \ref{local dyn VCH}.  
The study of Julia points with infinite orbit is more subtle.  As we show in Section \ref{example section}, there exist examples of the following:\begin{enumerate}
\item  a map $f: \bP^1 \to \bP^1$ of degree 2 defined over $k = \bQ(t)$ with bad reduction at $t=0$, for which the non-archimedean Julia set at $t=0$ is a Cantor set in the completion $\bP^1(k_0)$ at $t=0$, and the local geometric height $\hat{\lambda}_{f,0}(a)$ is in $\R\setminus \bQ$ for {\em all} Julia points $a$ with infinite orbit. See \S\ref{Julia example 1}; compare the main results of \cite{DG:rationality}.
\item  a map $f: \bP^1 \to \bP^1$ of degree 2 defined over $k = \bQ(t)$ with bad reduction at $t=0$, and a point $a$ defined by a formal power series in $\bQ[[t]]$ in the non-archimedean Julia set of $f$ at $t=0$ for which, at the place $v = \infty$ of $\bQ$, the function $V_v$ of Theorem \ref{local dyn VCH} will fail to be defined at $t=0$.  See \S\ref{divergent example}.
\end{enumerate}

For either example, if such a point $a$ can be constructed to be {\em algebraic} over $k$, then, upon replacing $k$ with a finite extension, it would provide a counterexample to Conjecture \ref{conj}, and the results we prove for totally Fatou points would fail to extend to all $a \in \bP^1(k)$.  More precisely, example (1) would show that the divisor $D$ constructed in Theorem \ref{dyn VCH}, defined by \eqref{divisor def}, needs to be an $\R$-divisor instead of a $\bQ$-divisor; compare Proposition \ref{rational}.  Example (2) would show that the sequences of functions converging to define the $V_v$ of Theorem \ref{local dyn VCH} would not always converge uniformly in the neighborhood of a singularity; compare Theorem \ref{uniform convergence}.

\begin{remark}  \label{DO} It is known that, working with maps $f$ defined over the field $\ell = \bC(t)$, there exist points $a \in \bP^1(\ell)$ that are totally Fatou for $f$ but for which the analog of the (archimedean) function $V_\infty$ of Theorem \ref{local dyn VCH} is unbounded on the base curve $\bP^1(\bC)$ \cite{DO:discontinuity}.  The construction in \cite{DO:discontinuity} is different from the construction for example (2) and uses Baire Category.  The results of Favre and Gauthier show that such examples over $\ell$ or examples of the types (1) and (2) above cannot exist for polynomials $f$ \cite{Favre:Gauthier:continuity}. 
\end{remark}

\medskip\noindent{\bf Acknowledgements.}
We thank Hexi Ye for helpful discussions about this problem.  We also thank the anonymous referees for their comments and suggestions.  This research was supported by NSF grant DMS-2050037.

\bigskip
\section{$M_K$-terminology}

In this section, we fix some basic terminology associated to the number field $K$ and remind the reader of fundamental facts about elements of $k = K(X)$.

Let $M_K$ denote the set of places of the number field $K$, each giving rise to an absolute value $|\cdot|_v$ on $K$ which is normalized to extend one of the standard absolute values on the field $\bQ$ of rational numbers.  The set $M_K$ satisfies the product formula, 
	$$\prod_{v \in M_K} |x|_v^{N_v} = 1$$
for all $x \in K^*$.   We let $K_v$ denote the completion of $K$ at $v$, so that 
	$$N_v = [K_v: \bQ_v].$$
For each place $v$ of $K$, we let $\bC_v$ be the completion of an algebraic closure $\overline{K_v}$.  We also fix an embedding $\Kbar \hookrightarrow \bC_v$.  We let $X^{an}_v$ denote the Berkovich analytification of the curve $X$ over the field $\bC_v$.

We will use the following terminology, as in \cite[Chapter 10]{Lang:Diophantine}:  An {\bf $M_K$-constant} is a function $\mathfrak{C}: M_K \to \R$ so that $\mathfrak{C}_v = 0$ at all but finitely many places $v$.  An {\bf $M_K$-quasiconstant} is a function $\mathfrak{C}: M_K \to \R$ such that 
	$$\sum_{v\in M_K} N_v \left|\mathfrak{C}_{v}\right| < \infty.$$
A collection of functions $\mathfrak{f}_v: Y \to \R$, for $v\in M_K$, defined on a set $Y$, is {\bf $M_K$-bounded} if there exists an $M_K$-constant $\mathfrak C$ so that $|\mathfrak{f}_v(y)| \leq \mathfrak{C}_v$ for all $y \in Y$ and $v\in M_K$.

Fix a point $\gamma \in X(K)$ and a choice of $\omega_\gamma \in K(X)$ defining local coordinates for $X$ near $\gamma$.  An {\bf $M_K$-neighborhood} of $\gamma$ is a collection of open neighborhoods  $U_v$ of $\gamma$ in $X(\bC_v)$, for $v\in M_K$, given locally by $\{|\omega_\gamma|_v < 1\}$ for all but finitely many places $v$.  This definition is independent of the choice of $\omega_\gamma$ uniformizing $X$ near $\gamma$, as a consequence of the following proposition.  

 Let $g$ denote the genus of $X$. For each $\gamma \in X(K)$, choose $\xi^\gamma \in K(X)$ so that $\xi^\gamma$ has a pole of order $2g+1$ at $\gamma$ and no other poles in $X$.  The divisor of a function $h\in K(X)$ is 
 	$$(h) = \sum_{\gamma \in X(\Kbar)} (\ord_\gamma h) \; \gamma.$$

\begin{proposition}  \label{global bound}
For any nonconstant $h \in K(X)$ with $\supp (h) \subset X(K)$, there exists an $M_K$-constant $\mathfrak{c}$ so that 
\begin{multline*}
e^{-\mathfrak{c}_v}  \prod_{\gamma  \in \supp (h)}  \max\{1, |\xi^\gamma(t)|_v\}^{(-\ord_\gamma  h)/(2g+1)}  ~   \\   \leq  \quad   |h(t)|_v   \quad
\leq \quad  e^{\mathfrak{c}_v} \prod_{\gamma \in \supp (h)}   \max\{1, |\xi^\gamma(t)|_v\}^{(-\ord_\gamma  h)/(2g+1)} 
\end{multline*}
for all $t \in X(\Kbar) \setminus \supp (h)$ and $v\in M_K$. Moreover, for each $\gamma \in X(K)$, the notion of $M_K$-neighborhood of $\gamma$ is well defined.
\end{proposition}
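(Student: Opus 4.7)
The plan is to interpret both sides of the asserted double inequality as $M_K$-systems of local Weil functions for the same $\bQ$-divisor $(h) = \sum_\gamma (\ord_\gamma h)\,\gamma$ on $X$, and then to invoke the standard uniqueness-up-to-$M_K$-bounded principle for such systems (\cite[Chapter 10]{Lang:Diophantine}).

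Taking logarithms, the double inequality is equivalent to the statement that
\[
\Psi_v(t) \;:=\; \log|h(t)|_v \;+\; \sum_{\gamma\in\supp(h)} \frac{\ord_\gamma h}{2g+1}\,\log\max\{1,|\xi_\gamma(t)|_v\}
\]
is $M_K$-bounded on $X(\Kbar)\setminus \supp(h)$. First, $-\log|h(t)|_v$ is the standard local Weil function attached to the rational function $h\in K(X)$, for its divisor $(h)$. Second, by construction each $\xi_\gamma\in K(X)$ has a single pole of exact order $2g+1$ at $\gamma$ and no others, so it defines a morphism $\xi_\gamma\colon X\to\bP^1$ of degree $2g+1$ over $K$ whose pullback of $\infty\in\bP^1$ equals $(2g+1)\gamma$. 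Since the standard projective Weil function for $\infty\in\bP^1$ is $z\mapsto\log\max\{1,|z|_v\}$, pulling back shows that $\log\max\{1,|\xi_\gamma(t)|_v\}$ is an $M_K$-system of local Weil functions for $(2g+1)\gamma$ on $X$. By linearity, the weighted sum $\sum_\gamma \frac{\ord_\gamma h}{2g+1}\log\max\{1,|\xi_\gamma(t)|_v\}$ is then a local Weil function for $\sum_\gamma(\ord_\gamma h)\gamma=(h)$. As two local Weil functions for the same divisor differ by an $M_K$-bounded function, $\Psi_v$ is $M_K$-bounded, which is exactly the asserted inequality.

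The main obstacle to a fully self-contained proof is tracking $M_K$-uniformity (rather than mere pointwise $v$-boundedness) of the Weil function errors; this is handled by the cited general machine, which applies here because all the relevant data ($h$, the $\xi_\gamma$, and $\supp(h)\subset X(K)$) are $K$-rational, so only finitely many places of $K$ can contribute nontrivially. Finally, to verify that the $M_K$-neighborhood of $\gamma$ is well defined, I apply the now-proven inequality to $\omega_\gamma/\omega'_\gamma\in K(X)$ for any two local parameters $\omega_\gamma,\omega'_\gamma$ at $\gamma$: since $\ord_\gamma(\omega_\gamma/\omega'_\gamma)=0$, its divisor is supported off $\gamma$, and the $\xi_\beta$ for $\beta\neq\gamma$ are regular near $\gamma$, so the right-hand side of the inequality is $M_K$-bounded above and below by positive constants on a small neighborhood of $\gamma$. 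Hence $|\omega_\gamma(t)/\omega'_\gamma(t)|_v$ lies between two positive $M_K$-constants there, so the conditions $|\omega_\gamma(t)|_v<1$ and $|\omega'_\gamma(t)|_v<1$ coincide on this neighborhood for all but finitely many $v\in M_K$, proving independence of the choice of uniformizer.
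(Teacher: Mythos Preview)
Your argument is correct. Both approaches prove the same thing, but the routes differ in emphasis.

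The paper unpacks the Weil-function machinery from \cite[Chapter 10]{Lang:Diophantine} explicitly for this situation: it builds a Zariski cover $\mathcal{U}$ of $X$ adapted to $\supp(h)$, constructs a projective embedding $X\hookrightarrow\bP^N$ so that the affine charts are subordinate to $\mathcal{U}$, covers $X$ by affine $M_K$-bounded sets $E_{j,v}$, and then estimates $|h|_v$ and each $|\xi_\gamma|_v$ directly on these sets using regularity. Your approach treats the two sides as Weil functions for the common divisor $(h)$ and invokes the uniqueness theorem as a black box. This is exactly what the paper's Remark acknowledges (``its proof is contained in \cite[Chapter 10]{Lang:Diophantine}''), so you are citing the same source the paper unpacks. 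Your version is shorter and more conceptual; the paper's version is self-contained and makes the $M_K$-bookkeeping visible, which is useful since later sections (e.g., Proposition~\ref{F v bounds}) rely on the same explicit affine-cover technique.

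For the well-definedness of $M_K$-neighborhoods, the paper takes a slightly different tack: rather than comparing two uniformizers $\omega_\gamma,\omega'_\gamma$ directly, it shows that \emph{any} $M_K$-neighborhood of $\gamma$ defined via a uniformizer coincides with $\{|\xi_\gamma|_v>1\}$ for all but finitely many $v$, giving a canonical description independent of the uniformizer. Your direct comparison via $\omega_\gamma/\omega'_\gamma$ also works, but note that your last step (that the $\xi_\beta$ for $\beta\neq\gamma$ are $M_K$-bounded near $\gamma$) implicitly uses a one-sided Weil-function estimate for $\xi_\beta$, which is legitimate since the only pole of $\xi_\beta$ is at $\beta\in X(K)$, but is worth flagging since the zeros of $\xi_\beta$ need not lie in $X(K)$.
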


\begin{remark}  
The proposition is a theorem of Weil \cite{Weil:arithmetic}, and its proof is contained in \cite[Chapter 10]{Lang:Diophantine} or \cite[Theorem 2.2.11, Remark 2.2.13]{Bombieri:Gubler}, but we include an argument here for completeness.  
\end{remark}

\begin{proof}
For each $\gamma \in \supp (h)$, let $U_\gamma$ be the complement in $X$ of $\supp(h)\setminus\{\gamma\}$ and all zeroes of $\xi^\gamma$, so that $U_\gamma$ is a Zariski-open neighborhood of $\gamma$.  The functions 
	$$h^\gamma  := h^{2g+1} (\xi^\gamma)^{\ord_\gamma h}$$
and $1/h^\gamma$ and $1/\xi^\gamma$ and $\xi^{\gamma'}$ for $\gamma' \not=\gamma$ in $\supp (h)$  are all regular on $U_\gamma$.  Let $U_h = X \setminus \supp(h)$, so that $h$, $1/h$ and each $\xi^\gamma$, $\gamma \in \supp(h)$, are regular on $U_h$.  Note that $\mathcal{U} = \{U_h \} \cup \{U_\gamma: \gamma \in \supp (h)\}$ is an open cover of $X$.  

As in \cite[Chapter 10, Lemma 1.1]{Lang:Diophantine}, there exists a projective embedding of $X$ into $\bP^N$, defined over $K$, so the complement of each coordinate hyperplane in $\bP^N$ intersects $X$ in an open subset of some $U \in \mathcal{U}$.  Indeed, letting $F_U$ be the divisor consisting of the sum of points in the complement of $U \in \mathcal{U}$, we can find effective divisors $H_U$ so that the elements of $\{F_U + H_U :~  U \in \mathcal{U}\}$ are linearly equivalent, and so that there is no point in the intersection of the supports of $F_U + H_U$. (This is because $mH - F_U$ will be very ample for any choice of ample $H$ and every $U \in \mathcal{U}$, for all sufficiently large $m \in \mathbb{N}$.)  The elements $\{F_U + H_U :~ U \in \mathcal{U}\}$ thus induce a morphism $\phi:X\to \mathbb{P}^k$ for some $k$.  Choosing any projective embedding $i:X\hookrightarrow \bP^r$ defined over $K$, for some $r>0$, our desired embedding comes from postcomposing $\phi\times i: X\to \mathbb{P}^k\times \mathbb{P}^r$ with the Segre embedding $\mathbb{P}^k\times \mathbb{P}^r\hookrightarrow \mathbb{P}^{(k-1)(m-1)-1}$.

Let $\mathbb{A}^N(\Kbar)$ denote affine space of dimension $N$, and let 
	$$\|(y_1, \ldots, y_N)\|_v = \max\{|y_1|_v, \ldots, |y_N|_v\}$$
be a $v$-adic norm on $\mathbb{A}^N(\Kbar)$, for each $v \in M_K$.  A collection of subsets $E_v \subset \mathbb{A}^N(\Kbar)$, for $v \in M_K$, is {\bf affine $M_K$-bounded} if each $E_v$ is bounded, and if $E_v$ lies in the unit polydisk $\{y:  \|y\|_v \leq 1\}$ for all but finitely many $v$.  

As in \cite[Chapter 10, Proposition 1.2]{Lang:Diophantine}, we can now cover $X$ by a finite collection $\{{\bf E}_j\}_{j=0}^N$ of affine $M_K$-bounded sets, subordinate to the open cover $\mathcal{U}$.  
Indeed, suppose that $(x_0: x_1: \cdots : x_N)$ are the coordinates of $\bP^N$.  For each $j = 0, \ldots, N$, let $U(j) \in \mathcal{U}$ be an element containing $X \cap \{x_j \not=0\}$.   For each $v \in M_K$, we let $E_{j,v}$ be the set of all points in $\bP^N(\Kbar)$ with projective coordinates $(x_0:  x_1 : \cdots : x_N)$ so that $|x_j|_v$ is maximal.  Then $E_{j,v}$ is the unit polydisk in the affine chart where $x_j\not=0$ with coordinates $y_i = x_i/x_j$.  For each $v\in M_K$, these affine bounded sets cover all of $\bP^N(\Kbar)$ and so also $X$, and the intersection of $E_{j,v}$ with $X$ is a subset of $U(j)$.    
We let ${\bf E}_j$ be the collection $\{E_{j,v}:  v \in M_K\}$, for $j = 0, \ldots, N$.  

Fix $j$.  For $U(j) = U_\gamma$, since $h^\gamma$ and $1/h^\gamma$ are both regular on $U_\gamma$, we have an $M_K$-constant $\mathfrak{g}_\gamma$ such that 
\begin{equation} \label{equal near gamma}
	e^{-\mathfrak{g}_{\gamma,v}} ~ \leq ~ |h^\gamma|_v = | h^{2g+1} (\xi^{\gamma})^{\ord_\gamma h}|_v  ~ \leq ~ e^{\mathfrak{g}_{\gamma,v}}
\end{equation}
on $E_{j,v}$.  It is also the case that $1/\xi^\gamma$ is regular on $U_\gamma$, and so is $\xi^{\gamma'}$ for each $\gamma'\not= \gamma$ in $\supp(h)$, so we can enlarge $\mathfrak{g}_\gamma$ if needed so that 
	$$|\xi^\gamma|_v \geq e^{-\mathfrak{g}_{\gamma,v}}\quad \mbox{ and } \quad |\xi^{\gamma'}|_v \leq e^{\mathfrak{g}_{\gamma,v}} $$
on $E_{j, v}$.  Moreover, we can also arrange that  
	$$|h|_v  \left\{  \begin{array}{ll}   
			\leq  e^{\mathfrak{g}_{\gamma,v}}   & \mbox{if } \ord_\gamma h >0  \\
			\geq e^{-\mathfrak{g}_{\gamma,v} } & \mbox{if } \ord_\gamma h < 0  \end{array}  \right.$$
on $E_{j,v}$, because either $h$ or $1/h$ is regular on $U_\gamma$.  By increasing $\mathfrak{g}_\gamma$ yet again, it follows that 
\begin{eqnarray*}
 e^{-\mathfrak{g}_{\gamma,v}}  \prod_{\beta \in \supp(h)} \max\{1, |\xi^\beta(t)|_v\}^{(-\ord_{\beta}  h)/(2g+1)} &=&
e^{-\mathfrak{g}_{\gamma,v}}   \max\{1, |\xi^\gamma(t)|_v\}^{(-\ord_\gamma  h)/(2g+1)}   \\
&\leq&   |h(t)|_v    \\
&\leq&  e^{\mathfrak{g}_{\gamma,v}}   \max\{1, |\xi^\gamma(t)|_v\}^{(-\ord_\gamma  h)/(2g+1)}  \\
&=&  e^{\mathfrak{g}_{\gamma,v}}  \prod_{\beta \in \supp(h)} \max\{1, |\xi^\beta(t)|_v\}^{(-\ord_{\beta}  h)/(2g+1)} 
\end{eqnarray*}
for all $t \in E_{j,v}$ where $U(j) = U_\gamma$.  

Similarly for $U(j) = U_h$, we can find an $M_K$-constant $\mathfrak{s}$ so that  
	$$e^{-\mathfrak{s}_{v}} \leq |h|_v \leq e^{\mathfrak{s}_{v}}$$
and 
	$$|\xi^\gamma|_v \leq e^{\mathfrak{s}_v}$$
on $E_{j, v}$, for all $\gamma \in \supp(h)$.  This completes the proof of the first statement of the proposition.

To see that the notion of $M_K$-neighborhood is well defined, we fix $\gamma_0 \in X(K)$ and choose any $\omega_0 \in K(X)$ with a simple zero at $\gamma_0$.   For the covering $\mathcal{U}$ of $X$ associated to $\omega_0$, note that $U_{\gamma_0}$ is the unique element containing $\gamma_0$.  So for each $v$ and $j$, if the set $E_{j,v}$ contains $\gamma_0$, then it must lie in $U_{\gamma_0}$.  The inequality \eqref{equal near gamma} implies that $|\omega_0|_v^{2g+1} = |\xi^{\gamma_0}|_v^{-1}$ on such $E_{j,v}$, for all but finitely many $v$.  On the other hand, we also have that if $|\xi^{\gamma_0}(t)|_v > 1$ at a point $t \in E_{j,v}$, for some $j$, then $E_{j,v}$ is contained in $U_{\gamma_0}$ for all but finitely many $v$ (because $|\xi^{\gamma_0}|_v \leq 1$ on the $E_{j,v}$'s in the other elements of $\mathcal{U}$).  In other words, any $M_K$-neighborhood of $\gamma_0$ defined by $\omega_0$ coincides with $\{t\in X(\bC_v): |\xi^{\gamma_0}(t)|_v > 1\}$ for all but finitely many places $v$ of $K$.  This completes the proof of the proposition.  
\end{proof}  

\bigskip
\section{Escape rates and Weil heights}

Throughout this section, we fix $f: \bP^1\to \bP^1$ of degree $d \geq 2$, defined over $k = K(X)$, and any point $a \in \bP^1(k)$.  

\subsection{The singular set $\cS(F,A)$ in $X$}  \label{lifts}
Working in homogeneous coordinates on $\bP^1$, we let
	$$F = (P,Q):  \A^2 \to \A^2$$
be a {\bf homogeneous lift} of $f$, with homogeneous polynomials $P(z,w) \in k[z,w]$ and $Q(z,w) \in k[z,w]$ of degree $d$ having no common zeroes in $\bP^1(\kbar)$, so that $f(z) = P(z,1)/Q(z,1)$ in local coordinates.  Choose a {\bf lift} $A = (\alpha, \beta) \in k^2\setminus \{(0,0)\}$ of $a = (\alpha:\beta) \in \bP^1(k)$.  For each $\gamma \in X(\Kbar)$, we let 
	$$\ord_\gamma F = \min\{\ord_\gamma c:  \mbox{ coefficients } c \mbox{ of } P \mbox{ and } Q\}$$
and
	$$\ord_\gamma A = \min\{ \ord_\gamma \alpha, \ord_\gamma \beta\}.$$
We let $\Res F \in k^*$ denote the homogeneous resultant of $P$ and $Q$; see, for example, \cite[\S2.4]{Silverman:dynamics}.  

Set
\begin{equation} \label{sing F}
	\cS(F) = \{\gamma \in X(\Kbar):  \ord_\gamma F \not=0 \mbox{ or } \ord_\gamma \Res F \not= 0\}
\end{equation}
and
\begin{equation} \label{sing F A}
	\cS(F,A) = \cS(F) \cup \{\gamma \in X(\Kbar):  \ord_\gamma A \not= 0\}
\end{equation}
Note that $\cS(F,A)$ is a finite set.  

\begin{convention}  \label{K choice}
We enlarge the number field $K$, if needed, so that $\cS(F,A) \subset X(K)$. 
\end{convention}

\subsection{Geometric escape rates and a divisor on $X$}

Recall here that throughout we identify the places of $k$ with the points $\gamma\in X(\Kbar)$, with a slight abuse of terminology. 
For each $\gamma \in X(\Kbar)$, we work with the absolute value on $k$ defined by 
	$$|z|_\gamma := e^{-\ord_\gamma z},$$
and the norm $\| \cdot \|_\gamma$ on $k^2$ given by 
	$$\|(z,w)\|_\gamma = \max\{|z|_\gamma, |w|_\gamma\}.$$
There is a constant $C_\gamma \geq 1$ so that 
\begin{equation}  \label{gamma bounds}
	C_\gamma^{-1} \|(z,w)\|_\gamma^d \leq \|F(z,w)\|_\gamma \leq C_\gamma \|(z,w)\|_\gamma^d
\end{equation}
for all $(z,w) \in k^2$; we can take $C_\gamma = 1$ for all $\gamma \not\in \cS(F)$ \cite[Proposition 5.57]{Silverman:dynamics}.

The {\bf escape rate} of $A$ for $F$ at $\gamma$ is the quantity
\begin{equation} \label{geometric escape}
	G_{F,\gamma}(A) = \lim_{n\to\infty}  \frac{1}{d^n} \log\|F^n(A)\|_\gamma.
\end{equation} 
It exists in $\R$, by \eqref{gamma bounds}, and it is equal to 0 for all $\gamma \not\in \cS(F,A)$; see, e.g., \cite[Proposition 5.58]{Silverman:dynamics}.  We define an $\R$-divisor by  
\begin{equation} \label{D(F,A)}
	D(F,A) = \sum_{\gamma \in X(\Kbar)}  G_{F,\gamma}(A)  \;  \gamma.
\end{equation}
The support of $D(F,A)$ is contained in $\cS(F,A)$ and so in $X(K)$ by Convention \ref{K choice}.
If we had chosen different lifts of $f$ and $a$, say $cF$ and $bA$ for $c, b \in k^*$, then 
\begin{equation} \label{geometric change}
	G_{cF, \gamma}(bA) = G_{F,\gamma}(A) - \frac{1}{d-1} \ord_\gamma c - \ord_\gamma b.
\end{equation}
It follows that $D(cF, bA)$ and $D(F,A)$ are linearly equivalent $\R$-divisors on $X$.

\subsection{A Weil height associated to $D(F,A)$}  \label{Weil for (F,A)}
Let $g$ denote the genus of $X$.   For each $\gamma \in X(K)$, choose a meromorphic function $\xi^{\gamma} \in K(X)$ so that $\xi^{\gamma}$ has a pole of order $2g+1$ at $\gamma$ and no other poles.  

Let $D = D(F,A)$ be defined by \eqref{D(F,A)}, and recall that $\supp D(F,A) \subset X(K)$ by Convention \ref{K choice}.  For each place $v$ of $K$, we define a function on $X(\bC_v) \setminus \supp D$ by 
	$$\lambda_{D,v}(t) =  \sum_{\gamma \in \cS(F,A)} G_{F,\gamma}(A)  \, \frac{\log^+|\xi^\gamma(t)|_v}{2g+1}.$$
This function extends continuously to the Berkovich analytificiation $X_v^{an} \setminus \supp D$.

A Weil height for $D$ can be defined by
\begin{equation} \label{Weil height}
	h_D(t) = \frac{1}{[K:\bQ]}  \frac{1}{|\Gal(\Kbar/K)\cdot t|} \; \sum_{x\in \Gal(\Kbar/K)\cdot t} \; \sum_{v \in M_K} N_v \, \lambda_{D,v}(x)
\end{equation}
for all $t \in X(\Kbar)\setminus\supp D$, and we may set $h_D(t)=0$ for $t\in\supp D$.  
This $h_D$ is indeed a Weil height associated to the $\R$-divisor $D$, as it is an $\R$-linear combination of Weil heights built from the local functions
	$$\frac{1}{2g+1} \log^+|\xi^{\gamma}(t)|_v$$
at each place $v$ of $K$, associated to the divisor $\gamma$.

\subsection{Arithmetic escape rates} \label{aer}
For each place $v$ of the number field $K$, we define a norm $\| \cdot \|_v$ on $\Kbar^2$ by 
	$$\|(z,w)\|_v = \max\{|z|_v, |w|_v\}.$$
For each $t \in X(\Kbar) \setminus \cS(F)$, we let $F_t$ denote the specializations of $F$.  We continue to use the collection of functions $\{\xi^{\gamma}:  \gamma \in \cS(F)\}$ from \S\ref{Weil for (F,A)}.  The following proposition appears in various forms in the literature, e.g., in \cite[Proposition 5.57]{Silverman:dynamics} \cite[Lemma 10.1]{BRbook} \cite[Lemma 3.3]{D:stableheight}, but we require an adelic version for our theorems.  Recall that $\cS(F) \subset X(K)$ by Convention \ref{K choice}.

\begin{proposition}  \label{F v bounds}
For each $\gamma \in \cS(F)$, choose $\beta^{\gamma} \in k$ so that $\ord_\gamma (\beta^\gamma  F) = 0$.  There is an $M_K$-constant $\mathfrak{b}$ and an $M_K$-neighborhood $\mathfrak{U}^\gamma$ of $\gamma$ in $X$ so that
$$e^{-\mathfrak{b}_v}  |\xi^{\gamma}(t)|_v^{- (\ord_\gamma \Res (\beta^\gamma F))/(2g+1)} ~ \leq ~ \frac{|\beta^\gamma_t|_v \| F_t(z,w)\|_v}{\|(z,w)\|_v^d} ~\leq~ e^{\mathfrak{b}_v}   $$
for all $v\in M_K$, $t \in \mathfrak{U}_v^\gamma \setminus \{\gamma\}$, and for all $(z,w) \in (\bC_v)^2\setminus\{(0,0)\}$. 
We can choose the $M_K$-neighborhoods $\mathfrak{U}^\gamma$ for $\gamma \in \cS(F)$ to be pairwise disjoint.  Moreover, given any $M_K$-neighborhoods $\mathfrak{U}^\gamma$, for $\gamma \in \cS(F)$, there exists an $M_K$-constant $\mathfrak{c}$ so that 
$$e^{-\mathfrak{c}_v}   ~ \leq  ~ \frac{\|F_t(z,w)\|_v}{\|(z,w)\|_v^d} ~ \leq ~  e^{\mathfrak{c}_v}$$
for all $v\in M_K$, $t \in X(\bC_v) \setminus \left( \bigcup_{\gamma \in \cS(F)} \mathfrak{U}^\gamma_v \right)$, and for all $(z,w) \in (\bC_v)^2\setminus\{(0,0)\}$.
\end{proposition}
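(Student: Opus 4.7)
The strategy is to apply Proposition~\ref{global bound} to the coefficients of $\beta F$ and to the resultant $\Res(\beta F)$, then deduce the estimates from the triangle inequality (for the upper bound) and a classical resultant identity (for the lower bound).

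For the upper bound, write $\beta F = (\beta P, \beta Q)$ with coefficients $a_{ij} \in K(X)$.  By the choice of $\beta$, each $a_{ij}$ is regular at $\gamma$, so $\ord_\gamma a_{ij} \geq 0$.  Choose an $M_K$-neighborhood $\mathfrak{U}$ of $\gamma$ disjoint from the (finite) set of points $\delta \neq \gamma$ appearing in the divisors of the $a_{ij}$'s; this requires shrinking $\mathfrak{U}_v$ only at finitely many $v$, as $\{|\omega_\gamma|_v < 1\}$ automatically excludes such $\delta$ at all other $v$.  Proposition~\ref{global bound} then bounds $|a_{ij}(t)|_v$ by an $M_K$-constant on $\mathfrak{U}_v$, since $|\xi_\delta(t)|_v \leq 1$ on $\mathfrak{U}_v$ for $\delta \neq \gamma$ (after absorbing the finite bad set into the constant), and the factor $\max\{1, |\xi_\gamma(t)|_v\}^{-\ord_\gamma a_{ij}/(2g+1)} \leq 1$ because the exponent is non-positive.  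The triangle inequality yields
$$|\beta_t|_v \|F_t(z,w)\|_v \leq e^{\mathfrak{b}_v} \|(z,w)\|_v^d$$
on $\mathfrak{U}_v \setminus \{\gamma\}$ for some $M_K$-constant $\mathfrak{b}$.

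For the lower bound, invoke the classical resultant identity: there exist homogeneous polynomials $R_i, S_i \in K(X)[z,w]$ of degree $d-1$ for $i = 1, 2$, whose coefficients are polynomial expressions in those of $\beta F$, satisfying
$$R_i(z,w)\,(\beta P)(z,w) + S_i(z,w)\,(\beta Q)(z,w) = \Res(\beta F)\, z^{2d-i}\, w^{i-1}.$$
The coefficients of $R_i, S_i$ are $M_K$-bounded on $\mathfrak{U}$ by the same argument just applied to the $a_{ij}$, so taking the maximum over $i = 1, 2$ gives
$$|\Res(\beta F)(t)|_v \cdot \|(z,w)\|_v^{2d-1} \leq e^{\mathfrak{c}_v}\, |\beta_t|_v \|F_t(z,w)\|_v \cdot \|(z,w)\|_v^{d-1}$$
for some $M_K$-constant $\mathfrak{c}$.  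Applying Proposition~\ref{global bound} a second time, now to the global function $\Res(\beta F) \in K(X)^*$ (after possibly further shrinking $\mathfrak{U}$ to exclude $\supp \Res(\beta F) \setminus \{\gamma\}$), produces
$$|\Res(\beta F)(t)|_v \geq e^{-\mathfrak{e}_v}\, |\xi_\gamma(t)|_v^{-\ord_\gamma \Res(\beta F)/(2g+1)}$$
on $\mathfrak{U}_v$.  Dividing the first display by $\|(z,w)\|_v^{2d-1}$, substituting the second, and consolidating $M_K$-constants yields the claimed lower bound.

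The $\mathfrak{U}$'s for distinct $\gamma \in \cS(F)$ can be shrunk to be pairwise disjoint, since $\cS(F)$ is finite.  For the complement statement: outside any fixed such collection, $|\xi_\gamma(t)|_v \leq 1$ for every $\gamma \in \cS(F)$ and all but finitely many $v$; since $\cS(F)$ contains both $\supp \Res F$ and the poles of every coefficient of $F$, Proposition~\ref{global bound} bounds the coefficients of $F$ above and $|\Res F(t)|_v$ below by an $M_K$-constant on the complement.  Running the two preceding paragraphs without the $\beta$-rescaling then delivers the two-sided complement estimate.  The main obstacle here is purely organizational---arranging compatible $M_K$-neighborhoods so that all of the relevant $\xi_\delta$'s behave as desired---but this is routine since the finitely many rational functions involved (coefficients of $\beta F$, $R_i$, $S_i$, and $\Res(\beta F)$) all have finite divisor support on $X$.
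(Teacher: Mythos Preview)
Your approach is essentially the same as the paper's: bound the coefficients of $\beta F$ via Proposition~\ref{global bound}, use the triangle inequality for the upper bound, and for the lower bound combine the resultant identity with Proposition~\ref{global bound} applied to $\Res(\beta F)$. The paper cites the resultant-based lower bound as a black box (\cite[Proposition 5.57]{Silverman:dynamics}, \cite[Lemma 3.3]{D:stableheight}) while you unpack it explicitly, but the content is the same.

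One slip to fix: the classical identity yields
\[
R_1\,(\beta P) + S_1\,(\beta Q) = \Res(\beta F)\, z^{2d-1}, \qquad R_2\,(\beta P) + S_2\,(\beta Q) = \Res(\beta F)\, w^{2d-1},
\]
not $z^{2d-i}w^{i-1}$. As written, your $i=1,2$ give $z^{2d-1}$ and $z^{2d-2}w$, and $\max\{|z|^{2d-1},|z|^{2d-2}|w|\}$ need not equal $\|(z,w)\|_v^{2d-1}$ (take $|z|_v<|w|_v$). With the correct pair $z^{2d-1}$, $w^{2d-1}$ your displayed inequality and the rest of the argument go through unchanged.
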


\begin{proof}
Recall that $F = (P,Q)$ for homogeneous polynomials $P$ and $Q$ of degree $d$ with coefficients in $k$.  
By our choice of $\beta^\gamma$, there is an $M_K$-neighborhood $\mathfrak{U}^\gamma$ and an $M_K$-constant $\mathfrak{b}$ so that 
\begin{align}\label{bounded coefficients}
e^{-\mathfrak{b}_v} ~\leq ~\max\{ |c_t|_v:  \mbox{ coefficients } c \mbox{ of } \beta^\gamma P \mbox{ and } \beta^\gamma Q\}  ~ \leq~ e^{\mathfrak{b}_v} 
\end{align}
for each $t \in \mathfrak{U}_v^\gamma$ and each $v \in M_K$, by Proposition \ref{global bound}. 
By increasing the constant $\mathfrak{b}$, the upper bound on $\|\beta^\gamma_t F_t(z,w)\|_v/\|(z,w)\|_v^d$ follows from the triangle inequality. 

We can enlarge $\mathfrak{b}$ at the archimedean places, if needed, so that 
	$$\frac{\|\beta^\gamma_t F_t(z,w)\|_v}{\|(z,w)\|_v^d}  ~ \geq  ~  e^{-\mathfrak{b}_v} |\Res(\beta^\gamma_t F_t)|_v$$
for each $t \in \mathfrak{U}_v^\gamma$, for all $v \in M_K$, applying \cite[Proposition 5.57]{Silverman:dynamics} and \cite[Lemma 3.3]{D:stableheight}.  Applying Proposition \ref{global bound} again, this time to $\Res(\beta^\gamma F)$, shrinking the $M_K$-neighborhood and increasing the $M_K$-constant $\mathfrak{b}$ again if necessary, we have
	$$\frac{\|\beta^\gamma_t F_t(z,w)\|_v}{\|(z,w)\|_v^d}  ~ \geq  ~  e^{-\mathfrak{b}_v}  |\xi^{\gamma}(t)|_v^{- (\ord_\gamma \Res (\beta^\gamma F))/(2g+1)}$$
for all $t \in \mathfrak{U}_v^\gamma$ and each $v \in M_K$.

The final statements of the proposition follow from the same combination of Proposition \ref{global bound} with \cite[Proposition 5.57]{Silverman:dynamics}, because the coefficients of $F$ will have no poles and $\Res F$ will have no poles or zeroes outside of $\cS(F)$.  
\end{proof}

Similar to the geometric escape rates of \eqref{geometric escape}, we can define arithmetic escape rates, working at each place $v$ of the number field $K$.  For each $v \in M_K$, the {\bf escape rate function for the pair $(F,A)$} is defined on $X(\Kbar)\setminus \cS(F,A)$ by
\begin{equation} \label{arithmetic escape}
	G_{F_t,v}(A_t) = \lim_{n\to\infty}  \frac{1}{d^n} \log\|F_t^n(A_t)\|_v.
\end{equation}
It exists in $\R$ for all $t \in X(\Kbar)\setminus \cS(F,A)$ by Proposition \ref{F v bounds}; see, e.g., \cite[Proposition 5.58]{Silverman:dynamics}.  The proof of convergence for \eqref{arithmetic escape} shows it is locally uniform in $t$, so that $G_{F_t,v}(A_t)$ extends to a continuous function $t \in X(\bC_v)\setminus \cS(F,A)$ at each place $v \in M_K$.  In fact, it extends to be continuous on the Berkovich analytification $X_v^{an}\setminus \cS(F,A)$; see, e.g., \cite[pp.~295--296]{BRbook} where the escape rate is ``Berkovich-ized".  If we had chosen different lifts of $f$ and $a$, say $cF$ and $bA$ for $c, b \in k^*$, then 
\begin{equation} \label{arithmetic change}
	G_{c_tF_t,v}(b_tA_t) = G_{F_t,v}(A_t) + \frac{1}{d-1} \log|c_t|_v + \log|b_t|_v
\end{equation}
for $t \in X(\Kbar) \setminus (\cS(F,A) \cup \cS(cF, bA))$.  

These escape rate functions provide local height expressions for the canonical height $\hat{h}_{f_t}$ evaluated at $a_t$.  In particular, we have 
	$$\hat{h}_{f_t}(a_t) = \frac{1}{[K:\bQ]}  \frac{1}{|\Gal(\Kbar/K)\cdot t|} \sum_{x\in \Gal(\Kbar/K)\cdot t} \sum_{v\in M_K} N_v \,G_{F_x,v}(A_x) $$
for all $t \in X(\Kbar) \setminus \cS(F,A)$.  See, for example, \cite[Theorem 5.59]{Silverman:dynamics}.  Note that the sum over all places of $K$ is independent of the choice of lifts $F$ and $A$, by the product formula.  
	
\subsection{Variation of canonical height}  \label{proof goals}
Recall that we are trying to understand if 
	$$\hat{h}_{f_t}(a_t) - h_D(t)$$
is bounded, as claimed in Theorem \ref{dyn VCH}, where $h_D$ is a choice of Weil height for $D = D(F,A)$ defined by \eqref{D(F,A)}.  Recalling that any two choices of Weil height for the same divisor are bounded from one another (and in fact, $M_K$-bounded) it suffices to work with the Weil height constructed in \eqref{Weil height}.  Assuming that the point $a \in \bP^1(k)$ is totally Fatou for $f$, a hypothesis which will be defined and examined in the next Section, we aim to prove three things:  
\begin{enumerate}
\item  that the local geometric height $G_{F,\gamma}(A)$ is in $\bQ$ at all points $\gamma \in X(\Kbar)$, so that the divisor $D = D(F,A)$ of \eqref{D(F,A)} will be a $\bQ$-divisor; 
\item that for all places $v$ of $K$, the $v$-adic functions 
	$$V_v(t) :=  G_{F_t,v}(A_t) -  \sum_{\gamma \in \cS(F,A)}  G_{F,\gamma}(A) \, \frac{\log^+|\xi^\gamma(t)|_v}{2g+1}$$
on $X(\bC_v) \setminus \cS(F,A)$ extend to bounded -- and in fact continuous -- functions on the Berkovich analytification $X^{an}_{\bC_v}$; and
\item that the sum 
	$$\hat{h}_{f_t}(a_t) - h_D(t) ~ = ~ \frac{1}{|\Gal(\Kbar/K)\cdot t|} \sum_{x\in \Gal(\Kbar/K)\cdot t}  \sum_v N_v \, V_v(x)$$ 
is uniformly bounded over all points $t \in X(\Kbar) \setminus \cS(F,A)$.   
\end{enumerate}

\bigskip
\section{The non-archimedean Fatou set}
\label{Fatou section}

Throughout this section, we fix $f: \bP^1\to \bP^1$ of degree $d \geq 2$, defined over $k = K(X)$.  For each fixed $\gamma \in X(\Kbar)$, we let $k_\gamma$ be the completion of $k$ with respect to the valuation $\ord_\gamma$, and let $\mathbb{L}_\gamma$ be the completion of an algebraic closure of $k_\gamma$.   In this section, we introduce and study the totally Fatou condition that is assumed for Theorem \ref{dyn VCH}, and we prove Theorems \ref{dense} and \ref{modification}.

\subsection{The Fatou set} \label{Fatou def}
Fix $\gamma\in X(\Kbar)$.  Let $d_\gamma(x,y)$ denote the chordal distance between $x$ and $y$ in $\bP^1(\bL_\gamma)$.  Explicity, if $x = (x_1:x_2)$ and $y = (y_1:y_2)$, then 
	$$d_\gamma(x,y) = \frac{|x_1y_2 - x_2y_1|_\gamma}{\max\{|x_1|_\gamma, |x_2|_\gamma\} \max\{|y_1|_\gamma, |y_2|_\gamma\}}.$$
The {\bf non-archimedean Fatou set of $f$ at $\gamma$} is the set $\Omega_\gamma(f)$ of all points $x \in \bP^1(\bL_\gamma)$ for which we can find an open disk $D_x$ containing $x$ so that the family of functions $\{f^n|{D_x}\}$ is equicontinuous in the distance $d_\gamma$.  See, for example, \cite[Chapter 5]{Benedetto:book}. Its complement $\bP^1(\bL_{\gamma})\setminus\Omega_{\gamma}(f)$ is the {\bf non-archimedean Julia set of $f$ at $\gamma$}.

This Fatou set $\Omega_\gamma(f)$ will be all of $\bP^1(\bL_\gamma)$ at $\gamma$ where $f$ has good reduction.  In our case, this implies that $\Omega_\gamma(f) = \bP^1(\bL_\gamma)$ for all $\gamma \not\in \cS(F)$, the singular set defined in \eqref{sing F}, for any choice of homogeneous polynomial lift $F$ of $f$. 

A point $a \in \bP^1(k)$ is {\bf totally Fatou for $f$} if $a \in \Omega_\gamma(f)$ at all $\gamma \in X(\Kbar)$.

\subsection{Hole-avoiding pairs} \label{hole-avoiding}
Now fix a point $a \in \bP^1(k)$.  Fix $\gamma\in X(\Kbar)$, and choose homogeneous polynomial lift
	$$F = (P,Q):  \A^2 \to \A^2$$
of $f$ over $k$ and lift 
	$$A = (\alpha, \beta) \in k^2$$
of $a$ so that 
\begin{equation} \label{normalized lifts}
	\ord_\gamma F = \ord_\gamma A = 0,
\end{equation}
where $\ord_\gamma F$ and $\ord_\gamma A$ are defined in \S\ref{lifts}, so that the specializations $F_\gamma$ and $A_\gamma$ are well defined.
The {\bf holes} of $f$ at $\gamma$ are the points $x = (x_1:x_2) \in \bP^1(\Kbar)$ for which $F_\gamma(x_1,x_2) = (0,0)$.  Holes exist if and only if $\Res F_\gamma = 0$. We say that the pair $(f,a)$ is {\bf hole-avoiding at $\gamma$} if the specializations satisfy
	$$F_\gamma^n(A_\gamma) \not= (0,0)$$
for all $n \geq 0$.  In particular, the pair $(f,a)$ is hole-avoiding at $\gamma$ for all points $a \in \bP^1(k)$ if $\Res F_\gamma \not=0$. It is easy to check that this definition is independent of the choice of lifts $F$ and $A$, as long as they satisfy \eqref{normalized lifts}.

\begin{example}  \label{translation example}
Consider 
	$$f(z) = \frac{z(z-1)}{z-t}$$ 
over $k = \bQ(t)$, at $t=0$.  The polynomial map $F(z,w) = (z(z-w), (z-tw)w)$ specializes to $F_0(z,w) = (z(z-w), zw)$.  The point $0 = (0:1) \in \bP^1(\Qbar)$ is the unique hole for $f$.  A point $a \in \bP^1(k)$ will therefore {\em fail to be} hole-avoiding at $t=0$ if and only if it specializes to $a_0 \in \bZ_{\geq 0}$.  Indeed, for $a_0 = n_0 \in \bZ_{\geq 0}$, the iterates of the lift $A = (a,1)$ will satisfy $F_0^{n_0+1}(A_0) = (0,0)$.  
\end{example}

\begin{example} \label{I(d) example}
Consider
	$$f(z)= z^2 + 1/t$$
over $k = \bQ(t)$, at $t=0$.  The polynomial map $F(z,w) = (t z^2 + w^2, t w^2)$ specializes to $F_0(z,w) = (w^2, 0)$.  The point $\infty = (1:0)  \in \bP^1(\Qbar)$ is the unique hole for $f$.  There are no hole-avoiding points in $\bP^1(k)$, because $F_0^2(z,w) = F_0(w^2,0) = (0,0)$ for all $(z,w) \in \Kbar^2$. 
\end{example}

We may view $f: \bP^1 \to \bP^1$ over $k$ as a rational map 
	$$\tilde{f}: X \times \bP^1 \dashrightarrow X \times \bP^1$$
of the surface $X \times \bP^1$ to itself, defined over the number field $K$ by $(t, z) \mapsto (t, f_t(z))$.  We may view the point $a \in \bP^1(k)$ as a section of the projection $X\times\bP^1 \to X$, also defined over $K$.  The following is immediate from the definitions:  

\begin{lemma}  \label{holes are indeterminate}  
A pair $(f,a)$ is hole-avoiding at $\gamma \in X(\Kbar)$ if and only if the iterates $f^n(a)$ (as sections of the fibered surface $X \times \bP^1 \to X$) are disjoint from the indeterminacy locus of the induced map $\tilde{f}:  X \times \bP^1 \dashrightarrow X \times \bP^1$ within the fiber $\{\gamma\}\times \bP^1$, for all $n\geq 0$.
\end{lemma}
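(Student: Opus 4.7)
The plan is to unwind the two conditions and match them through a careful analysis of how the homogeneous lift $F$ and the specialization at $\gamma$ interact with iteration.

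First, I would identify the indeterminacy locus of $\tilde{f}$ within the fiber $\{\gamma\}\times \bP^1$. Since the lift $F=(P,Q)$ satisfies $\ord_\gamma F = 0$, the specialization $F_\gamma=(P_\gamma, Q_\gamma)$ is a nonzero pair of homogeneous polynomials of degree $d$, and the rational map $\tilde{f}$ is locally given by $(t,[z_1:z_2]) \mapsto (t,[P_t(z_1,z_2):Q_t(z_1,z_2)])$. A point $(\gamma,[x_1:x_2])$ therefore belongs to $I(\tilde{f})$ if and only if $P_\gamma(x_1,x_2)=Q_\gamma(x_1,x_2)=0$, which is precisely the condition that $[x_1:x_2]$ is a hole of $f$ at $\gamma$.

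Next, I would compare $F_\gamma^n(A_\gamma)$ to the value of the section $f^n(a)$ at $\gamma$. Because the coefficients of $F$ and the components of $A$ are all regular at $\gamma$ (by $\ord_\gamma F=\ord_\gamma A=0$), polynomial evaluation commutes with specialization and $F_\gamma^n(A_\gamma) = F^n(A)_\gamma$ for every $n\ge 0$. Writing $m_n := \ord_\gamma F^n(A)$, we have $m_0=0$, and $F_\gamma^n(A_\gamma) = (0,0)$ if and only if $m_n>0$. The key observation is that the homogeneity of $F$ forces $m_n$ to grow once it becomes positive: if $\varpi$ is a uniformizer at $\gamma$ and $F^{n-1}(A) = \varpi^{m_{n-1}} B$ with $\ord_\gamma B=0$, then $F^n(A) = \varpi^{d\cdot m_{n-1}} F(B)$, so $m_n \geq d\, m_{n-1}$. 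Thus the set of indices $n$ with $m_n>0$ is upward-closed, and $(f,a)$ fails to be hole-avoiding at $\gamma$ precisely when there is a smallest $n_0\ge 1$ with $m_{n_0}>0$.

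Finally, I would match the two conditions via this index $n_0$. For $n<n_0$, $m_n=0$, so $F^n(A)$ is a normalized lift of $f^n(a)$ at $\gamma$ and $F_\gamma^n(A_\gamma)\in \A^2\setminus\{(0,0)\}$ is an affine representative of the point $f^n(a)(\gamma)\in \bP^1$. At $n=n_0-1$, $F_\gamma^{n_0-1}(A_\gamma)$ represents $f^{n_0-1}(a)(\gamma)$ and is annihilated by $F_\gamma$, so $f^{n_0-1}(a)(\gamma)$ is a hole. Conversely, if $f^{m}(a)(\gamma)$ is a hole for some smallest $m\ge 0$, then the same inductive reasoning yields $m_j=0$ for $j\le m$, $F_\gamma^m(A_\gamma)$ represents the hole, and $F_\gamma^{m+1}(A_\gamma)=(0,0)$. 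Combined with the first step, this produces the claimed equivalence.

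The argument is essentially bookkeeping; the only mild subtlety is keeping straight the distinction between $F^n(A)\in k^2$ and its normalized version representing the section $f^n(a)$ at $\gamma$, together with the inductive use of the growth $m_n \ge d\, m_{n-1}$ ensuring that a positive order of vanishing propagates forward under iteration.
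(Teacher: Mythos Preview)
Your proposal is correct and follows the same line as the paper, which simply declares the lemma ``immediate from the definitions'' without further argument; you have carefully unwound those definitions, correctly identifying the fiberwise indeterminacy locus with the holes and tracking the order of vanishing $m_n=\ord_\gamma F^n(A)$ to link the two conditions.
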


Note that all of the indeterminacy points of $\tilde{f}$ in $X \times \bP^1$ are contained in the fibers over $\cS(F) \subset X$ for every choice of homogeneous polynomial lift $F$.  The term ``hole" for an indeterminacy point was first used in \cite{D:measures};  it was meant to capture the idea that the mass of the measures of maximal entropy in the family $f_t$, for $t \in X(\bC)\setminus \cS(F)$, was ``falling into the holes" of $f$ and its iterates $f^n$ at $t = \gamma$. The same condition appears in \cite{Mavraki:Ye}.  

\subsection{The action of $f$ on the Berkovich projective line} \label{Gamma stuff}
Fix $\gamma\in X(\Kbar)$. 
We now reinterpret the notion of hole-avoiding in the language of the Berkovich projective line defined over the field $\bL_\gamma$, which we denote by $\bP^{1,an}_\gamma$, and the extension of $f$ to a dynamical system on $\bP^{1,an}_\gamma$.   A good basic reference for the dynamics of $f$ on $\bP^{1,an}_\gamma$ is \cite{Benedetto:book}.  Note that the definition of hole-avoiding extends naturally to elements $a \in \bP^1(k_\gamma)$, for $k_\gamma$ the completion of $k$ at $\gamma$, with a lift $A$ chosen in $(k_\gamma)^2\setminus \{(0,0)\}$.  

By definition, the {\bf hole-directions} for $f$ from a Type II point $\zeta \in \bP^{1,an}_\gamma$ are the connected components of $\bP^{1,an}_\gamma \setminus \{\zeta\}$ that intersect the set of preimages $f^{-1}(\zeta)$.  When $\zeta = \zeta_G$ is the Gauss point in $\bP^{1,an}_\gamma$,  the hole-directions correspond to the holes of $f$ at $\gamma$, via the natural identification of the connected components of $\bP^{1,an}_\gamma \setminus \{\zeta\}$ with $\bP^1(\Kbar)$.  See, for example, \cite[\S7.5]{Benedetto:book}. (In particular, if $f(\zeta)=\zeta$, the hole-directions coincide with the ``bad" directions of \cite[Theorem 7.34]{Benedetto:book}.)  This implies:

\begin{lemma}  \label{holes are bad}  
Let $a$ be an element of $\bP^1(k_\gamma)$.  The pair $(f,a)$ is hole-avoiding at $\gamma$ if and only if neither $a$ nor any iterate $f^n(a)$ lies in a hole-direction for $f$ from the Gauss point $\zeta_G$ in $\bP^{1,an}_\gamma$.   
\end{lemma}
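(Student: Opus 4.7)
The plan is to translate the algebraic condition defining hole-avoiding into the Berkovich-geometric condition on hole-directions via reduction to the residue field. The excerpt has already recalled the key dictionary: the connected components of $\bP^{1,an}_\gamma \setminus \{\zeta_G\}$ are in bijection with $\bP^1(\Kbar)$, the component containing a classical point is indexed by its reduction, and the hole-directions from $\zeta_G$ are precisely the components indexed by the holes of $f$ at $\gamma$. What remains is to show that hole-avoiding is equivalent to the orbit $\{\overline{f^n(a)}\}_{n\ge 0}$ of reductions never hitting a hole of $f$.

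First I would fix normalized lifts $F=(P,Q)\in k[z,w]^2$ and $A=(\alpha,\beta)\in (k_\gamma)^2$ with $\ord_\gamma F = \ord_\gamma A = 0$, so that $F_\gamma$ is a nontrivial homogeneous map defined over $\Kbar$ and $A_\gamma\in \Kbar^2$ is a nonzero vector whose projectivization equals $\bar a$. Set $B_n := F^n(A) \in (k_\gamma)^2$; because all coefficients are integral at $\gamma$ one has $\ord_\gamma B_n\geq 0$, and since specialization is a ring homomorphism $(B_n)_\gamma = F_\gamma^n(A_\gamma)$. The defining condition of hole-avoiding is thus precisely that $\ord_\gamma B_n = 0$ for every $n\ge 0$. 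Moreover, whenever $\ord_\gamma B_n = 0$, the vector $(B_n)_\gamma$ is a lift of the reduction $\overline{f^n(a)}\in \bP^1(\Kbar)$ of the classical point $f^n(a)\in \bP^1(k_\gamma)$.

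The core step is then the inductive observation that for any $n\ge 0$ with $\ord_\gamma B_n = 0$,
\[
\ord_\gamma B_{n+1} = 0 \quad \Longleftrightarrow \quad F_\gamma\bigl((B_n)_\gamma\bigr) \neq (0,0) \quad \Longleftrightarrow \quad \overline{f^n(a)} \text{ is not a hole of } f,
\]
where the last equivalence is just the definition of a hole. Iterating, if hole-avoiding fails take the smallest $n_0\ge 1$ with $\ord_\gamma B_{n_0}\ge 1$; the previous lift is normalized, its reduction is forced to be a hole, and so $f^{n_0-1}(a)$ lies in a hole-direction from $\zeta_G$. Conversely, if some $\overline{f^n(a)}$ is a hole, taking the first such $n$ shows $\ord_\gamma B_m = 0$ for $m\le n$ and $\ord_\gamma B_{n+1}\ge 1$, violating hole-avoiding. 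Combining the two directions with the component/reduction dictionary yields the claimed equivalence for both $a$ itself (the $n=0$ case) and all iterates.

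I do not anticipate a genuine obstacle here: the argument is essentially bookkeeping about when the naively iterated homogeneous lift $F^n(A)$ needs to be renormalized to represent the classical point $f^n(a)$, matched against the standard dictionary between reduction at the Gauss point and the hole locus of $f$. The one subtlety worth flagging is that $a$ is permitted to lie in $\bP^1(k_\gamma)$ rather than $\bP^1(k)$, so the lifts $A$ and $B_n$ must be taken in $(k_\gamma)^2$; however, since $F$ still has coefficients in $k\subset k_\gamma$ with $\ord_\gamma F = 0$, the reduction and iteration are unaffected, and the proof carries through verbatim.
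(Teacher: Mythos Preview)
Your proposal is correct and follows essentially the same approach as the paper: the paper treats the lemma as an immediate consequence of the dictionary between components of $\bP^{1,an}_\gamma\setminus\{\zeta_G\}$, reductions in $\bP^1(\Kbar)$, and holes of $f$, while you have carefully unpacked that dictionary by tracking when the iterated lift $F^n(A)$ first drops in valuation. There is nothing to add.
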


Let $\Omega^{an}_\gamma(f)$ be the Berkovich Fatou set of $f$ in $\bP^{1,an}_\gamma$; see, e.g., \cite[Chapter 8]{Benedetto:book}.  Any open set $U \subset \bP^{1,an}_\gamma$ that intersects the Berkovich Julia set $J^{an}_\gamma(f) := \bP^{1,an}_\gamma \setminus \Omega^{an}_\gamma(f)$ has the property that the union $\mathcal{U} = \bigcup_{n\geq 0} f^n(U)$ is dense in $\bP^{1,an}_\gamma$; in fact, the set $\mathcal{U}$ omits at most 2 points, both in $\bP^1(\mathbb{L}_\gamma)$ \cite[Theorem 8.15]{Benedetto:book}.  Recall that $\Omega^{an}_\gamma(f) \cap \bP^1(\bL_\gamma) = \Omega_\gamma(f)$, the non-archimedean Fatou set as we have defined it in \S\ref{Fatou def}.  

\begin{lemma} \label{Julia holes}
For any Type II point $\zeta \in \bP^{1,an}_\gamma$, the points of the non-archimedean Julia set of $f$ in $\bP^1(\mathbb{L}_\gamma)$ are contained in the union of the hole-directions from $\zeta$ for $f^n$, over all $n\geq 1$.   
\end{lemma}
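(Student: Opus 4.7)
The plan is to deduce this directly from the density statement for forward orbits of open sets meeting the Julia set, quoted just above the lemma (\cite[Theorem 8.15]{Benedetto:book}): if $U \subset \bP^{1,an}_\gamma$ is open with $U \cap J^{an}_\gamma(f) \neq \emptyset$, then $\bigcup_{n \geq 0} f^n(U)$ is dense in $\bP^{1,an}_\gamma$ and its complement consists of at most two points, each in $\bP^1(\bL_\gamma)$ (so in particular of Type I).

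Given a Type I Julia point $x \in J^{an}_\gamma(f) \cap \bP^1(\bL_\gamma)$, I would first let $U$ denote the connected component of $\bP^{1,an}_\gamma \setminus \{\zeta\}$ containing $x$. Because $\zeta$ is a single point, $U$ is open in $\bP^{1,an}_\gamma$, and it meets $J^{an}_\gamma(f)$ by the choice of $x$. Applying the density theorem to $U$ yields that $\bigcup_{n \geq 0} f^n(U)$ omits at most two points of $\bP^1(\bL_\gamma)$, all of Type I. Since $\zeta$ is Type II, it cannot belong to this exceptional set, so there exists $n \geq 0$ with $\zeta \in f^n(U)$. The case $n = 0$ is excluded, since $\zeta \notin U$; hence some $n \geq 1$ works, and then there is $y \in U$ with $f^n(y) = \zeta$, i.e., $y$ is a preimage of $\zeta$ under $f^n$ lying in $U$. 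By the definition recalled in \S\ref{Gamma stuff}, this is precisely the assertion that $U$ is a hole-direction from $\zeta$ for $f^n$. Since $x \in U$, this gives the required $n$.

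There is no real obstacle beyond identifying the right tool; the content of the lemma is simply that the Type II hypothesis on $\zeta$ rules it out of the pair of Type I points that may be omitted by the forward orbit. The only checking required is the harmless bookkeeping above, together with the elementary observation that the components of $\bP^{1,an}_\gamma \setminus \{\zeta\}$ are open whenever $\zeta$ is a single point.
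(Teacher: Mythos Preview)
Your proof is correct and follows essentially the same route as the paper's: both take the component $U$ of $\bP^{1,an}_\gamma\setminus\{\zeta\}$ containing a Type~I Julia point, apply the density theorem \cite[Theorem 8.15]{Benedetto:book} to see that $\zeta$ (being Type~II) must lie in some $f^n(U)$, and conclude that $U$ contains a preimage of $\zeta$ under $f^n$, i.e., is a hole-direction for $f^n$. Your write-up is simply a bit more explicit about the bookkeeping (openness of $U$, ruling out $n=0$).
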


\begin{proof} 
Let $U$ be a connected component of $\bP^{1,an}_\gamma \setminus \{\zeta\}$.  If $U$ has non-empty intersection with the non-archimedean Julia set $J^{an}_\gamma(f) \cap \bP^1(\bL_\gamma)$, then the iterates of $U$ must contain all Type II points, including $\zeta$ itself  \cite[Theorem 8.15]{Benedetto:book}.  So $U$ must be a hole-direction from $\zeta$ for some iterate of $f$.  
\end{proof}

We are now ready to prove the following result, needed to analyze the dynamics of totally Fatou points for the proof of Theorem \ref{dyn VCH} and Theorem \ref{local dyn VCH}:

\begin{theorem} \label{B point}
Fix any $f: \bP^1\to \bP^1$ of degree $d \geq 2$, defined over $k = K(X)$, a point $\gamma \in X(K)$, and any point $a \in \bP^1(k_\gamma)$.  The point $a$ lies in the non-archimedean Fatou set $\Omega_\gamma(f)$ if and only if there exist a change of coordinates $B \in \mathrm{PGL}_2(k)$ and iterates $f^n$ and $f^m$ so that the pair $(B f^n B^{-1}, B(f^m(a)))$ is hole-avoiding at $\gamma$.  
\end{theorem}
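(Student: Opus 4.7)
The plan is to prove the two implications separately.

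$(\Leftarrow)$ Set $g=Bf^nB^{-1}$ and $b=B(f^m(a))$. The Berkovich Fatou set is conjugation-equivariant ($\Omega^{an}_\gamma(BhB^{-1})=B(\Omega^{an}_\gamma(h))$) and invariant under iteration ($\Omega^{an}_\gamma(h^n)=\Omega^{an}_\gamma(h)$), so it suffices to show $b\in\Omega_\gamma(g)$. Hole-avoidance of $(g,b)$, translated via Lemma~\ref{holes are bad}, says every iterate $g^j(b)$ lies in a non-hole direction from $\zeta_G$. Arguing the contrapositive: if $b\in J^{an}_\gamma(g)\cap\bP^1(\bL_\gamma)$, then Lemma~\ref{Julia holes} places $b$ in a hole-direction $U$ from $\zeta_G$ for some iterate $g^N$, so $U$ contains a preimage $y$ of $\zeta_G$ under $g^N$. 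I chase the orbit $y,g(y),\ldots,g^{N-1}(y)$: the point $g^{N-1}(y)$ is itself a preimage of $\zeta_G$ under $g$, determining a hole-direction $U'$ for $g$ at $\zeta_G$, and the plan is to show that $g^{N-1}(b)$ also lies in $U'$, contradicting hole-avoidance. This is immediate when $g(\zeta_G)=\zeta_G$, because $g$ then permutes the directions at $\zeta_G$ and carries $U$ through its forward iterates alongside $b$. The general case reduces to this by first passing to an iterate $g^P$ for which some Type II point in the orbit of $\zeta_G$ is $g^P$-fixed, and running the analogous argument with Lemma~\ref{Julia holes} based at that fixed point.

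$(\Rightarrow)$ Assume $a\in\Omega_\gamma(f)$; I produce $B$, $n$, $m$ in three steps.

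First, using the Rivera-Letelier classification of Berkovich Fatou components together with the no-wandering-domains results invoked in the paper (\cite{Benedetto:wandering} and the appendix of \cite{DF:degenerations2}), there exist $m\ge 0$ and $n\ge 1$ with $f^m(a)$ in a periodic Fatou component $U^*$ of period $n$. The classification further identifies $U^*$ as either an attracting basin of an attracting periodic cycle of $f^n$, or a Rivera domain on which $f^n$ acts as an analytic disk automorphism.

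Second, I extract an open disk $D\subset U^*$ containing $f^m(a)$ and all its forward $f^n$-iterates with $f^n(D)\subseteq D$: in the attracting case, after enlarging $m$ by a multiple of $n$, $f^m(a)$ lies near the attracting periodic point and $D$ is taken as a small $f^n$-invariant disk around that point; in the Rivera case, $f^n$-invariance of $D$ is automatic. Using density of $k$ in $k_\gamma$ and cofinality of $|k^*|_\gamma$ in $(0,\infty)$, I refine $D$ to be $k$-rational: $D=\{|z-c|_\gamma<|\beta|_\gamma\}$ with $c\in k$ and $\beta\in k^*$.

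Third, I set $B(z)=(z-c)/\beta\in\mathrm{PGL}_2(k)$, $g=Bf^nB^{-1}$, and $b=B(f^m(a))$. Then $B(D)$ is the open unit disk in $\bP^1(\bL_\gamma)$, so the orbit $\{g^j(b)\}$ stays inside and every iterate reduces to $0=(0:1)\in\bP^1(\overline{k}_\gamma)$. Hole-avoidance of $(g,b)$ reduces to verifying that $(0:1)$ is not a hole of $g$ at $\gamma$, i.e., that no preimage of $\zeta_G$ under $g$ lies in the $(0:1)$-direction from $\zeta_G$. In both cases $g$ acts on the closed Berkovich unit disk without hitting $\zeta_G$ from the interior of the open disk---strictly contracting toward the attractor in the attracting case, preserving $\zeta_G$ as a boundary fixed point in the Rivera case---so $(0:1)$ is not a hole.

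The hardest step is the last: reconciling the algebraic hole-avoidance condition, phrased via the normalized homogeneous lift of $g$ at $\gamma$, with the Berkovich-dynamical construction of $B$. When the invariant disk $D\subset U^*$ is only $k$-rational (rather than $k_\gamma$-rational), the refinement must be chosen carefully so that after normalization of the lift of $g$ no accidental hole appears at $(0:1)$. This is managed by the openness of $U^*$, which lets me further shrink $D$ within $U^*$ while preserving $k$-rationality of the center and radius.
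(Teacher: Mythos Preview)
Your forward implication has a genuine gap: you misread Benedetto's theorem as a no-wandering-domains result. Over $k_\gamma$, wandering Fatou components \emph{can} exist; what \cite{Benedetto:wandering} shows is that such a component is eventually a disk with a \emph{periodic} Type~II boundary point. In that situation there is no $f^n$-invariant disk containing the full forward orbit of $f^m(a)$---the orbit visits infinitely many distinct directions from that periodic boundary point---so your single-disk construction of $B$ breaks down entirely. Your Rivera case has the same defect: a Rivera domain need not be a disk, and even inside one there is generally no $f^n$-invariant open disk, properly contained in the component, that captures the whole orbit (e.g.\ an annulus on which $f^n$ acts by $z\mapsto\alpha z$ with $|\alpha|_\gamma=1$ and $\alpha$ not a root of unity). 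The paper handles both cases by taking $\zeta_B$ to be a suitable periodic Type~II point---the boundary point in the wandering case, the retraction of $f^m(a)$ to the periodic set $P\subset\overline V$ in the Rivera case---rather than forcing the orbit into a single direction from $\zeta_B$; this is where the analytically-stable machinery of \cite{DF:degenerations2} is used essentially. Your attracting-basin case is fine and matches the paper's argument there.

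Your reverse implication also has a gap when $g(\zeta_G)\neq\zeta_G$: you propose to ``pass to an iterate $g^P$ for which some Type~II point in the orbit of $\zeta_G$ is $g^P$-fixed,'' but the forward orbit of $\zeta_G$ need not be preperiodic. The paper's argument is different: hole-avoidance of $\{g^j(b)\}$ forces $g(\zeta_G)$ to lie in a non-hole direction $V$ from $\zeta_G$ (else every non-hole direction would map into a hole under one iterate); since $V$ is itself non-hole, $g(V)\subseteq V$, so the hole-directions from $\zeta_G$ for every iterate $g^N$ coincide with those for $g$, and Lemma~\ref{Julia holes} gives the contradiction directly. (Minor point: even when $g(\zeta_G)=\zeta_G$, the induced action on directions is a rational map of the local degree, not a permutation, though your argument there survives this correction.)
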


We shall see that one implication is straightforward from the definitions, that the existence of the hole-avoiding pair implies that $a \in \Omega_\gamma(f)$.  To prove the converse implication, assuming $a \in \Omega_\gamma(f)$, we follow the proof of \cite[Theorem D]{DF:degenerations2}, which itself uses the Rivera-Letelier classification of Berkovich Fatou components in the Berkovich space $\bP^{1,an}_\gamma$ \cite{RiveraLetelier:Asterisque} \cite[Appendix]{DF:degenerations2} and the Benedetto wandering domains theorem \cite{Benedetto:wandering}, while also keeping track of the orbit of the point $a$.

In the language of \cite{DF:degenerations2}, given a finite set $\Gamma$ of Type II points in $\bP^{1,an}_\gamma$, a connected component $U$ of $\bP^{1,an}_\gamma\setminus \Gamma$ is called a {\bf $J$-component} for $\Gamma$ if $f^n(U)\cap\Gamma \not= \emptyset$ for some $n>0$.  The Julia set $J^{an}_\gamma(f)$ is contained in the union of the $J$-components and $\Gamma$ \cite[Proposition 2.5]{DF:degenerations2}.   The other connected components of $\bP^{1,an}_\gamma\setminus \Gamma$ are called {\bf $F$-components}.  An $F$-component $U$ for $\Gamma$ is called an {\bf $F$-disk} if it is a Berkovich disk; it is {\bf wandering} if the iterates $f^n(U)$ lie in pairwise disjoint $F$-components for all $n$.

A pair $(f, \Gamma)$ is {\bf analytically stable} if, for each $\zeta \in \Gamma$, we have either $f(\zeta) \in \Gamma$ or $f(\zeta)$ is contained in an $F$-component for $\Gamma$.

The {\bf $k$-split} Type II points $\zeta$ are those in the $\mathrm{PGL}_2(k)$-orbit of the Gauss point $\zeta_G$.  These are the Type II points that have $k$-rational points in infinitely many connected components of $\bP^{1,an}\setminus \{\zeta\}$.  Our proof strategy for Thoerem \ref{B point} also gives the following statement, which will be used in our proof of Theorem \ref{modification}. 

\begin{theorem}  \label{stable modification}
Fix any $f: \bP^1\to \bP^1$ of degree $d \geq 2$ defined over $k = K(X)$, a point $\gamma \in X(K)$, and any Fatou point $a \in \Omega_\gamma(f)\cap \bP^1(k_\gamma)$.   For any finite set of Type II points $\Gamma$, there exists a finite set $\Gamma' \supset \Gamma$ so that the pair $(f,\Gamma')$ is analytically stable and each point $f^n(a)$ of the orbit of $a$ lies in an $F$-disk for $\Gamma'$.  Moreover, if the elements of $\Gamma$ are $k$-split, then we can choose $\Gamma'$ so that its elements are also $k$-split.  
\end{theorem}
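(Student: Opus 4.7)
The plan is to follow closely the strategy of \cite[Theorem D]{DF:degenerations2}, augmenting it so as to simultaneously track the orbit of the Fatou point $a$. The two indispensable dynamical inputs are the Rivera-Letelier classification of Berkovich Fatou components \cite{RiveraLetelier:Asterisque} and Benedetto's no-wandering-domains theorem \cite{Benedetto:wandering}; together they force the forward orbit $\{f^n(a)\}_{n\geq 0}$ to meet only finitely many Berkovich Fatou components $U_0,\dots,U_{N-1}$, with a preperiodic tail on this finite list.

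The first stage sets up the $F$-disks. After replacing $f$ by a suitable iterate so that each Fatou component in the periodic tail is forward-invariant, I would use the Rivera-Letelier description to choose, for each $U_i$ visited by the orbit, a finite collection $\Xi_i$ of Type II points whose removal from $\bP^{1,an}_\gamma$ isolates every orbit point $f^n(a)\in U_i$ inside an open Berkovich disk contained in $U_i$. In the attracting-basin case this reduces to placing a single point close to the Type I or Type II attracting cycle, so that all but finitely many orbit points are trapped in a thin disk; in the indifferent (affinoid-type) case, the Rivera-Letelier description lets one separate the orbit from the bounded directions of the affinoid. Setting $\Gamma_1:=\Gamma\cup\bigcup_i\Xi_i$, every $f^n(a)$ then lies in a Berkovich open disk component of $\bP^{1,an}_\gamma\setminus\Gamma_1$, and this disk is an $F$-component because $\Gamma_1$ is disjoint from the orbit and the orbit stays in Fatou components.

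The second stage enforces analytic stability by iteratively adjoining, for each $\zeta\in\Gamma_1$ with $f(\zeta)$ lying in a $J$-component of the current set, the image $f(\zeta)$ itself. New points can be chosen outside the $F$-disks constructed in the first stage (shrinking those disks slightly if needed, which is possible because the $\Xi_i$ may be placed arbitrarily close to the boundaries of $U_i$), so the $F$-disk property for the orbit of $a$ is preserved throughout. That the process terminates in finitely many steps is the argument of \cite[Theorem D]{DF:degenerations2}: otherwise one would produce an infinite pairwise-disjoint family of iterates of some open set, contradicting Benedetto's theorem. For the $k$-split clause, each new Type II point can be perturbed into the $\mathrm{PGL}_2(k)$-orbit of the Gauss point using density of $k$-rational points in $k_\gamma$, and $f$ preserves $k$-splitness since it is defined over $k$.

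The hardest step I anticipate is the first stage in the case where a visited Fatou component $U_i$ is not itself a disk but an affinoid-type indifferent component: here the selection of $\Xi_i$ must simultaneously isolate the orbit tail of $a$ inside a single Berkovich open disk, avoid cutting into the bounded directions of the affinoid in a way that would obstruct the stage-two closure argument, and stay within the $k$-split locus. Once this first-stage configuration has been achieved, the closure-under-$f$ procedure and the $k$-split verification are essentially mechanical and parallel the arguments in \cite{DF:degenerations2}.
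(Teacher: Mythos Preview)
Your proposal has a genuine gap in its use of Benedetto's theorem. You assert that the forward orbit $\{f^n(a)\}$ meets only finitely many Fatou components, with a preperiodic tail. Over the field $k_\gamma$ this is false: wandering Fatou components \emph{do} exist (the field is not locally compact, and Benedetto's paper is precisely about their existence). What \cite{Benedetto:wandering} actually gives is that a wandering component eventually maps to a Berkovich disk whose boundary point is periodic and $k$-split; the orbit of $a$ can visit infinitely many distinct Fatou components, all of which are disks hanging off a single periodic Type II orbit. The paper handles this wandering case explicitly, and your first stage, as written, does not.

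There is also a structural issue with your order of operations. In your second stage the points you adjoin are the images $f(\zeta)$ of existing vertices; these are determined, not chosen, so the claim that ``new points can be chosen outside the $F$-disks'' is not available to you. Shrinking the first-stage disks to dodge these images is circular, since the set of images depends on the $\Gamma_1$ you are constructing. The paper avoids this by reversing the order: it first invokes \cite[Theorem D]{DF:degenerations2} as a black box to obtain an analytically stable $\Gamma'\supset\Gamma$, and only then adds finitely many carefully selected points---an initial segment $\zeta_a,f(\zeta_a),\dots,f^{m-1}(\zeta_a)$ of the orbit of a small $k$-split disk boundary around $a$, together with a finite periodic orbit of a distinguished point $\zeta_B$ (the periodic boundary in the wandering case, a disk boundary near the attractor in the basin case, or a retraction to the periodic locus in the Rivera-domain case). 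Because these added points form a forward-invariant set landing in $F$-components, analytic stability is preserved automatically, and $k$-splitness is inherited from the construction rather than obtained by an unjustified perturbation.
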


\begin{remark}
In \cite[Theorem D]{DF:degenerations2}, the existence of an analytically stable $\Gamma' \supset \Gamma$ for the map $f$ is proved, but without the additional conclusion about the orbit of the Fatou point $a$.
\end{remark}

\begin{proof}[Proof of Theorems \ref{B point} and \ref{stable modification}]
Fix $a \in \bP^1(k_\gamma)$, and assume first that there exist $B \in \mathrm{PGL}_2(k)$ and integers $n \geq 1$ and $m\geq 0$ so that the pair $(B f^n B^{-1}, B(f^m(a)))$ is hole-avoiding at $\gamma$.  Let $\zeta_B = B^{-1}(\zeta_G) \in \bP^{1,an}_\gamma$, where $\zeta_G$ is the Gauss point.  Then by Lemma \ref{holes are bad}, 
the point $f^m(a)$ and all iterates $f^{jn + m}(a)$, for $j\geq 0$, do not lie in the hole-directions of $f^n$ from $\zeta_B$.  But the existence of such an orbit implies that either $f^n(\zeta_B) = \zeta_B$ or that $f^n(\zeta_B)$ lies in a direction from $\zeta_B$ which is not a hole-direction (for otherwise all points would either be in a hole-direction or mapped into a hole-direction under one iterate).   If $f^n(\zeta_B) = \zeta_B$, then the hole-directions from $\zeta_B$ for an iterate $f^{jn}$, with $j\geq 1$, coincide with directions that are mapped to the hole-directions for $f^n$ by some $f^{\ell n}$ with $\ell < j$; if $f^n(\zeta_B) \not= \zeta_B$, then the hole-directions for the iterates $f^{jn}$ must coincide with the holes for $f^n$, for all $j\geq 1$.  In either case, it then follows from Lemma \ref{Julia holes} that $f^m(a)$ is not in $J^{an}_\gamma(f) \cap \bP^1(\bL_\gamma)$.  In other words, $a$ must be an element of the Fatou set $\Omega_\gamma(f)$.  This proves one implication of Theorem \ref{B point}.

To prove the converse implication in Theorem \ref{B point}, we need to find a coordinate change $B$ with good properties.  We do this by constructing a Type II point $\zeta_B$ with the desired properties, and then we will choose any $B \in \mathrm{PGL}_2(k)$ sending $\zeta_B$ to the Gauss point $\zeta_G$. Along the way, we will prove Theorem \ref{stable modification}.

Let $\Gamma$ be any finite set of Type II points.  From \cite[Theorem D]{DF:degenerations2}, we know that there is a finite set of Type II points $\Gamma'\supset \Gamma$ (which can be chosen to be $k$-split if $\Gamma$ is $k$-split) so that the pair $(f, \Gamma')$ is analytically stable.  More precisely, the main theorem of \cite[\S3.2]{DF:degenerations2} states that, for every $\zeta \in \Gamma'$, one of the following three cases must hold:
\begin{enumerate}
\item the orbit of $\zeta$ lies in $\Gamma'$, and $f^k(\zeta) = f^\ell(\zeta)$ for some $\ell > k \geq 0$; 
\item some iterate of $\zeta$ lies in a wandering $F$-disk for $\Gamma'$, with a periodic boundary point $\zeta' \in \Gamma'$; or
\item some iterate of $\zeta$ lies in an $F$-component for $\Gamma'$ that contains an attracting periodic point.
\end{enumerate}  

Now fix a point $a \in \Omega_\gamma(f)$.  Choose a Berkovich disk $D_a$ containing $a$ and contained in $\Omega^{an}_\gamma(f)$, with $k$-split Type II boundary point $\zeta_a$.  Choose $D_a$ small enough so that the elements of $\Gamma'$ are disjoint from the forward iterates $f^j(D_a)$ for all $j\geq 0$; this is possible because $D_a \subset \Omega_{\gamma}^{an}(f)$.  Following the proof in \cite[\S3.2]{DF:degenerations2}, we use the classification of Fatou components (see \cite[Theorem A.1]{DF:degenerations2}) to analyze the orbit of the disk $D_a$, to choose a distinguished $k$-split Type II point to be $\zeta_B$, and to increase the set $\Gamma'$ further so it contains $\zeta_a$ and $\zeta_B$ and remains analytically stable.   

First assume that $a$ (and so also $D_a$) lies in a wandering Fatou component $U$.  From \cite[Theorem 5.1]{Benedetto:wandering}, there exists $m\geq 0$ so that $V = f^m(U)$ is a wandering Berkovich disk with periodic and $k$-split boundary point; see also \cite[Proposition 3.8]{DF:degenerations2}.  Enlarging $m$ if necessary, we can assume that $V$ does not contain $\zeta_a$ and that the union $\bigcup_{j\geq 0} f^j(V)$ is disjoint from $\Gamma'$.  Let $\zeta_B$ be the boundary point of $V$ and $n \geq 1$ the period of $\zeta_B$.  We then increase $\Gamma'$ to include the iterates $\zeta_a, f(\zeta_a), \ldots, f^{m-1}(\zeta_a)$ and the periodic orbit of $\zeta_B$.  Then the point $f^j(a)$ lies in an $F$-disk for $\Gamma'$ for all $j\geq 0$, the point $f^m(a)$ lies in a wandering $F$-disk for $\Gamma'$ with boundary point $\zeta_B$, and $(f,\Gamma')$ is analytically stable.  

Now suppose that $a$ lies in the basin of attraction of an attracting periodic point $p$ of period $n$.  Then there exists an integer $s \geq 0$, so that $f^s(D_a)$ is contained in the periodic Fatou component containing $p$.  Note that $p$ must be in the completion $\bP^1(k_\gamma)$, because the iterates $f^{jn + s}(a) \in \bP^1(k_\gamma)$ converge to $p$ as $j \to \infty$.  Thus, there exists a small disk $D_p$ around $p$ with $k$-split boundary point $\zeta_p$ that does not contain $f^s(\zeta_a)$ nor any element of $\Gamma'$ and so that $f^n(D_p) \subsetneq D_p$.   Now choose $m > s$ so that $f^m(a) \in D_p$ while $f^{m-1}(\zeta_a) \not\in D_p$.  We include the orbit $\zeta_a, f(\zeta_a), \ldots, f^{m-1}(\zeta_a)$ in $\Gamma'$, and we also include $\zeta_p, f(\zeta_p), \ldots, f^{n-1}(\zeta_p)$.  Then we set $\zeta_B = \zeta_p$.  Again it follows that the point $f^j(a)$ lies in an $F$-disk for $\Gamma'$ for all $j\geq 0$, the point $f^m(a)$ in an $F$-disk for $\Gamma'$ containing the attracting periodic point, and $(f,\Gamma')$ is analytically stable. 

The final case is where an iterate $f^m(D_a)$ lies in a periodic Rivera domain $V$.  If $\zeta_a$ is preperiodic, we include its forward orbit in $\Gamma'$.  Let $P$ be the subset of the closure $\overline{V}$ which is periodic; as explained in \cite[Lemma 3.10]{DF:degenerations2}, the set $P$ is a closed and connected subset of $\overline{V}$ that includes the finite set of boundary points $\del V$.  If there exists $m$ so that $f^m(a)$ lies in $P$, then we let $\zeta_B = f^m(\zeta_a)$ and let $n$ be its period.  Note that $f^m(a)$ lies in a periodic $F$-disk for the new $\Gamma'$.  If $\zeta_a$ has infinite forward orbit, let $\zeta_B \in P$ be the Type II point which is the retraction of the iterate $f^m(a)$ to $P$; by increasing $m$ if necessary, we can arrange so that no elements of $\Gamma'$ lie in this component of $V\setminus P$ containing $f^m(a)$ nor in any component containing the forward orbit of $f^m(a)$.    We then include $\zeta_a, f(\zeta_a), \ldots, f^{m-1}(\zeta_a)$ and the orbit of $\zeta_B$ in $\Gamma'$.  Note that $f^m(a)$ now lies in wandering $F$-disk for $\Gamma'$.  The point $\zeta_B$ is $k$-split because the orbit of $a$ intersects infinitely many directions from $\zeta_B$.

In all cases, the pair $(f,\Gamma')$ is analytically stable for the newly augmented $\Gamma'$, and each element of the orbit $f^j(a)$, for $j\geq 0$, must lie in an $F$-disk.  This proves Theorem \ref{stable modification}.  Moreover, in all three cases, if we set $\Gamma_B = \{\zeta_B\}$, then the pair $(f^n, \Gamma_B)$ is also analytically stable, and the point $f^m(a)$ and its future iterates $f^{m+ jn}(a)$, $j\geq 1$, are in $F$-components for $\Gamma_B$.  In particular, they do not lie in hole-directions from $\zeta_B$.  We make any choice of $B \in \mathrm{PGL}_2(k)$ that takes $\zeta_B$ to the Gauss point $\zeta_G$. In view of Lemma \ref{holes are bad} this completes the proof of Theorem \ref{B point}.
\end{proof}

\subsection{Proof of Theorem \ref{modification}}
This is a consequence of Theorem \ref{stable modification}, similar to the proof of \cite[Theorem E]{DF:degenerations2} as an application of \cite[Theorem D]{DF:degenerations2}.  Fix $\gamma \in X(K)$, and assume that $a \in \bP^1(k)$ is in the non-archimedean Fatou set of $f$ at $\gamma$.  Choose a Zariski open set $U_\gamma \subset X$ so that all indeterminacy points of 
	$$\tilde{f}:  U_\gamma \times \bP^1 \dashrightarrow U_\gamma \times \bP^1$$
lie over $\gamma$.  Apply Theorem \ref{stable modification} to $\Gamma = \{\zeta_G\}$, the Gauss point, in $\bP^{1,an}_\gamma$.  The analytically stable pair $(f, \Gamma')$ guaranteed by Theorem \ref{stable modification} gives rise to a birational morphism $Y_\gamma \to U_\gamma \times \bP^1$ defined over $K$, which is an isomorphism outside of $\{\gamma\}\times \bP^1$, and an algebraically stable map $\tilde{f}_\gamma:  Y_\gamma \dashrightarrow Y_\gamma$ lifting $\tilde{f}$. (See \cite[\S4]{DF:degenerations2} for details on the relationship between vertex sets $\Gamma$ and modifications of the surface $X\times \bP^1$.)  Recall that $C_a$ denotes the curve in $X \times \bP^1$ defined the graph of $t \mapsto a_t$, and that $C_{f^n(a)}^{Y_\gamma}$ denotes the proper transform of the curve $C_{f^n(a)}$ in $Y_\gamma$, for each $n\geq 0$. 

Let $\pi: Y_\gamma \to U_\gamma$ denote the projection.  The indeterminacy points for the iterates $(\tilde{f}_\gamma)^n$ in $\pi^{-1}(\gamma)$ are identified with $J$-components of $\Gamma'$, and the $F$-disks for $\Gamma'$ are identified with smooth points in the fiber $\pi^{-1}(\gamma)$ that are not indeterminate for any iterate of $\tilde{f}_\gamma$.  Therefore, the conclusion of Theorem \ref{stable modification} about the Fatou point $a$ guarantees that the curves $C_{f^n(a)}^{Y_\gamma}$ are disjoint from $I(\tilde{f}_\gamma)$ and intersect the fiber over $\gamma$ in smooth points, for all $n\geq 0$.  Assuming that the point $a$ is totally Fatou, we can repeat this argument over each $\gamma \in X(K)$ where $\tilde{f}: X \times \bP^1 \dashrightarrow X \times \bP^1$ has indeterminacy; we glue the surfaces $Y_\gamma$ and maps $f_\gamma$ to obtain our desired rational map 
	$$\tilde{f}_Y : Y \dashrightarrow Y.$$  
	
For the converse implication, let $Y \to X \times \bP^1$ be any choice of birational morphism defined over $K$, and let $\pi: Y \to X$ be the projection to the first factor.  Assume that $a \in \bP^1(k)$ lies in the non-archimedean Julia set at $\gamma \in X(K)$.  Then we know that the curve $C_a^Y$ will intersect an indeterminacy point of some iterate $(\tilde{f}_Y)^j$ in the fiber of $Y$ over $\gamma$, by Lemma \ref{Julia holes}.  Indeed, any small Berkovich disk around $a$ will map, under large iterates of $f$, over each of the Type II points corresponding to the components of $Y$ over $\gamma$.  There are now two cases to consider.  If $C_a^Y$ or some iterate $C_{f^n(a)}^Y$ intersects a component of the fiber over $\gamma$ which is mapped by $\tilde{f}_Y$ into an indeterminacy point, then we are done.  If not, then since the point $p = C_a^Y \cap \pi^{-1}(\gamma)$ is indeterminate for $(\tilde{f}_Y)^j$, it must be that the point $p$ is sent by $(\tilde{f}_Y)^m$, for some $m < j$, to an element of $I(\tilde{f}_Y)$.  Consequently, $C_{f^m(a)}^Y$ intersects the indeterminacy set of $\tilde{f}_Y$, and the proof is complete.   
\qed

\subsection{Proof of Theorem \ref{dense}}
Assume that $f$ is defined over $k = K(X)$, for number field $K$.  Enlarging $K$ if necessary, we can assume that all places $\gamma$ of bad reduction for $f$ lie in $X(K)$.   At each place $\gamma \in X(\Kbar)$ of $k$, we know that the non-archimedean Fatou set $\Omega_\gamma(f) \cap \bP^1(k)$ is open in $\bP^1(k)$, in the $\gamma$-adic topology.  We also know that $\Omega_\gamma(f) \cap \bP^1(k) = \bP^1(k)$ for all but finitely many $\gamma$.  We will show that $\Omega_\gamma(f) \cap \bP^1(k)$ is dense in $\bP^1(k)$ for the remaining $\gamma$.  

Fix $\gamma \in X(K)$ and a point $b \in \bP^1(k)$, and let $\zeta$ be any $k$-split Type II point in $\bP^{1,an}_\gamma$ bounding a disk around $b$.  Consider all the connected components of $\bP^{1,an}_\gamma \setminus \{\zeta\}$.  If one of these disks intersects $\bP^1(k)$, we call it a {\bf $k$-disk} at $\zeta$.   In the natural identification of the set of components of $\bP^{1,an}_\gamma \setminus \{\zeta\}$ with $\bP^1(\Kbar)$, the $k$-disks correspond to the rational points $\bP^1(K)$.  If a $k$-disk at $\zeta$ intersects the Berkovich Julia set, we call it a {\bf Julia $k$-disk} at $\zeta$.  

If there are only finitely many Julia $k$-disks at $\zeta$, then we can always find infinitely many $k$-disks at $\zeta$ that are fully contained in the Fatou set.  This shows the existence of Fatou elements of $\bP^1(k)$ in the closed Berkovich disk around $b$ bounded by $\zeta$.  

If there are infinitely many Julia $k$-disks at $\zeta$, then $\zeta$ is in the Julia set (because the Julia set is closed in $\bP^{1,an}_\gamma$), and $\zeta$ is therefore preperiodic \cite[Proposition 3.9]{DF:degenerations2}.   We are still able to find infinitely many $k$-disks at $\zeta$ that are fully contained in the Fatou set.  Suppose that $f^{m+n}(\zeta) = f^m(\zeta)$ for some $m \geq 0$ and $n\geq 1$.  Let $e$ be the local degree of $f^n$ at $f^m(\zeta)$, as defined in \cite[\S7.4]{Benedetto:book}, so that $e\geq 1$.  For $e > 1$, the iterate $f^n$ induces a map $g: \bP^1 \to \bP^1$ of degree $e$, defined over $K$, by the natural identification of $\bP^1(\Kbar)$ with the set of directions from $f^m(\zeta)$. The Julia set of $f$ (which coincides with the Julia set for $f^n$) in $\bP^{1,an}_\gamma$ is contained in the union of the hole-directions from $f^m(\zeta)$ for $f^n$ and its iterates $f^{jn}$, $j\geq 1$, by Lemma \ref{Julia holes}.  In other words, the Julia directions are identified with a subset of the union $\bigcup_{j\geq 0} g^{-j}(E)$ for a finite set $E \subset \bP^1(\Kbar)$, corresponding to the hole-directions for $f^n$. But this implies that there are only finitely many Julia $k$-disks from $f^m(\zeta)$, because they correspond to a set in $\bP^1(K)$ with bounded Weil height, since $\deg g > 1$.  It follows that there were only finitely many Julia $k$-disks from $\zeta$, a contradiction.  So we conclude that $e=1$.  

The action of $f^n$ at $f^m(\zeta)$ therefore induces an automorphism $A \in \mathrm{PGL}_2(K)$ acting on $\bP^1(\Kbar)$, the set of directions from $f^m(\zeta)$.  The Julia $k$-directions from $f^m(\zeta)$ are contained in the union of the finitely many hole-directions of $f^n$ at $f^m(\zeta)$ and the hole-directions for all iterates $f^{nj}$, $j \geq 1$. As before, these directions are identified with the union of a finite set $E$ in $\bP^1(\Kbar)$ and the orbit of $E$ under $A^{-1}$.  Now let $h: \bP^1 \to \bP^1$ be the map induced by $f^m$ from $\zeta$ to $f^m(\zeta)$, defined over $K$, under any choice of identification of the $k$-directions from $\zeta$ and $f^m(\zeta)$ with $\bP^1(K)$.  Then we can find infinitely many $k$-disks at $\zeta$ that are fully contained in the Fatou set, as a consequence of the following:

\begin{lemma}  
For any number field $K$, any finite set $E$ in $\bP^1(K)$, any $A: \bP^1\to \bP^1$ of degree 1 defined over $K$, and any nonconstant $h: \bP^1\to \bP^1$ defined over $K$, there exists an infinite set $Y \subset \bP^1(K)$ for which 
	$$\left( \bigcup_{j \geq 0} A^j(h(Y))\right) \cap E = \emptyset.$$
\end{lemma}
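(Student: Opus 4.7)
Set $Z := \bigcup_{j\ge 0} A^{-j}(E) \subset \bP^1(K)$; the conclusion is then equivalent to exhibiting an infinite $Y \subset \bP^1(K)$ with $h(Y) \cap Z = \emptyset$. First I would handle the case $A$ of finite order in $\mathrm{PGL}_2(K)$, which is immediate: $Z$ is finite, $h^{-1}(Z) \cap \bP^1(K)$ has at most $(\deg h)|Z|$ elements, and $Y := \bP^1(K) \setminus h^{-1}(Z)$ is infinite. So I would assume from now on that $A$ has infinite order.

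The plan is to produce $Y$ locally at a suitable finite place $v$ of $K$, by $v$-adic approximation of a pole of $h$. I would fix a pole $y_0 \in \bP^1(\Kbar)$ of $h$, which exists since $h$ is nonconstant, and use Chebotarev's density theorem applied to $K(y_0)/K$ to produce infinitely many finite places $v$ of $K$ split completely in $K(y_0)/K$, so that $y_0 \in \bP^1(K_v)$. After excluding finitely many $v$ of bad reduction for $A$ or $h$, or where some $e \in E$ has $|e|_v > 1$, I would further restrict to $v$ at which the reduced orbit $\bar Z := \bigcup_j \bar A^{-j}(\bar E) \subset \bP^1(\mathbb F_v)$ does not contain $\infty$; this ensures $|z|_v \le 1$ for every $z \in Z$. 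Since $h$ has a pole at $y_0 \in \bP^1(K_v)$, there is a $v$-adic neighborhood $U$ of $y_0$ on which $|h(y)|_v > 1$ off $y_0$, and by density of $\bP^1(K)$ in $\bP^1(K_v)$ the set $Y := \bP^1(K) \cap U \setminus \{y_0\}$ is infinite, with each $y \in Y$ satisfying $h(y) \notin Z$.

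The main obstacle will be arranging the integrality condition $\infty \notin \bar Z$, i.e., that $\bar A^j(\infty) \notin \bar E$ for all $j \ge 0$, at infinitely many $v$. If $A$ has a fixed point in $\bP^1(K)$, one first conjugates it over $K$ to $\infty$, so that $\bar A$ fixes $\infty$ in every good reduction and the condition reduces to the innocuous $\infty \notin E$; the latter can be enforced by first handling $\infty$ separately (applying the statement to $E \setminus \{\infty\}$ and discarding the finitely many $y$ with $h(y) = \infty$). If $A$ has no $K$-rational fixed point, its fixed points lie in a quadratic extension $K(\sqrt\Delta)/K$, and I would enlarge the Chebotarev argument to the compositum $K(y_0, \sqrt\Delta)/K$ to produce infinitely many places $v$ splitting this extension as well; at such $v$, $\bar A$ acquires a fixed point in $\bP^1(\mathbb F_v)$ and can be conjugated over $\mathbb F_v$ to fix $\infty$, reducing to the previous case.
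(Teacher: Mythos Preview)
Your reduction step in the case where $A$ has no $K$-rational fixed point does not work as written. Conjugating $\bar A$ over $\mathbb F_v$ so that it fixes $\infty$ tells you that the \emph{reduction $\bar p$ of the fixed point of $A$} lies outside $\bar Z$; it does not tell you that the original point $\infty\in\bP^1(\mathbb F_v)$ lies outside $\bar Z$. But your ``previous case'' needed exactly $\infty\notin\bar Z$, because that is what forces $|z|_v\le 1$ for all $z\in Z$ and hence lets you separate $Z$ from values $|h(y)|_v>1$ near the pole $y_0$. After the $\mathbb F_v$-conjugation the roles of $\infty$ and $\bar p$ are swapped, and the pole $y_0$ you fixed (a preimage of the \emph{original} $\infty$) is no longer the right object; including $K(y_0,\sqrt\Delta)$ in the Chebotarev data does not help. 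Your argument is salvageable: from $\bar p\notin\bar Z$ you can instead pick $\bar y_1\in\bP^1(\mathbb F_v)$ with $\bar h(\bar y_1)=\bar p$ (using surjectivity of $\bar h$ at a place of good reduction) and take $Y$ to be the infinitely many $y\in\bP^1(K)$ reducing to $\bar y_1$; then $\overline{h(y)}=\bar p\notin\bar Z$, so $h(y)\notin Z$. This repair in fact makes the whole Chebotarev detour through a pole $y_0$ unnecessary.

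By contrast, the paper's argument is shorter and avoids reductions mod $v$ altogether: normalize $A^{-1}$ to $x\mapsto\alpha x$ (or $x\mapsto x+1$), choose a place with $|\alpha|_v>1$ (or an archimedean place), so that $Z=\bigcup_j\alpha^jE$ is discrete with sole accumulation point $\infty$; then for any $y_0\in\bP^1(K)$ with $h(y_0)\ne\infty$ and any sequence $y_n\to y_0$, one has $h(y_n)\notin Z$ for all but finitely many $n$. (The paper's normalization ``over $K$'' is itself informal when $A$ has no $K$-rational fixed point, but the same argument runs over $K(\sqrt\Delta)$ with a place $w$ where $|\alpha|_w>1$, and the approximating sequence $y_n$ can still be taken in $\bP^1(K)$.) Your integrality approach is workable but more elaborate, and the key step you flagged as ``reducing to the previous case'' needs the fix above.
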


\begin{proof}  
Choosing coordinates on $\bP^1$ over $K$, we can assume that $A^{-1}(x) = \alpha x$ for some $\alpha \in K^*$ or that $A^{-1}(x) = x+1$.  If $A$ has finite order, then there is nothing to show, as then $\bigcup_{j\geq 0} A^{-j}(E)$ is finite while $h(\bP^1(K))$ is infinite.  So if $A^{-1}(x) = \alpha x$, we can assume there exists a place $v$ of $K$ for which $|\alpha|_v > 1$. Then the set $\bigcup_{j\geq 0} A^{-j}(E) = \bigcup_{j\geq 0} \left(\alpha^j E\right)$ has no $v$-adic accumulation points except $\infty$. Choose any $y_0 \in \bP^1(K)$ so that $h(y_0)\neq \infty$, and let $\{y_n\}$ be any infinite sequence in $\bP^1(K)$ for which $y_n \to y_0$ $v$-adically.  Then $h(y_n) \to h(y_0)$ $v$-adically.  Therefore, letting $Y$ be this sequence $\{y_n\}$, after excluding at most finitely many elements from the sequence, we may conclude that $\left(\bigcup_{j\geq 0} A^{-j}(E) \right) \cap h(Y) = \emptyset$.  For $A^{-1}(x) = x+1$, we work with any archimedean place of $K$.  Let $y_0\in \bP^1(K)$ be a point for which $h(y_0) \not= \infty$, and select any sequence $y_n \in K$ for which $y_n \to y$ at this place.  Then, as before, letting $Y$ be the complement of finitely many points in $\{y_n\}$, we conclude that $\left(\bigcup_{j\geq 0} A^{-j}(E) \right) \cap h(Y) = \emptyset$.
\end{proof}

Repeating the above argument for all $k$-split points $\zeta$, we see that $U_\gamma := \Omega_\gamma(f) \cap \bP^1(k_\gamma)$ is open and dense in $\bP^1(k_\gamma)$ in the $\gamma$-adic topology. 

Let $\gamma_1, \ldots, \gamma_s \in X(K)$ denote the places for which $U_\gamma \not= \bP^1(k_\gamma)$.  Via the canonical embedding of $k$ into $\prod_{\gamma \in X(\Kbar)} k_\gamma$, we can approximate any tuple $(x_1, \ldots, x_s) \in \prod_i U_{\gamma_i}$ by elements in $k$.  This shows that totally Fatou points are open and dense in $\bP^1(k)$ in the topology induced from the product topology.  \qed

\subsection{Intersection theory for a Fatou point}
\label{intersection theory}
The existence of the resolution $Y \to X\times \bP^1$ constructed in Theorems \ref{modification} and \ref{stable modification} shows more. We now prove that the local geometric canonical height $\hat{\lambda}_{f, \gamma}(a)$, at each place $\gamma$ of the function field $k = K(X)$, can be computed as an intersection number in $Y$ when $a \in \bP^1(k)$ is totally Fatou.  In this way, for each place $\gamma$ of $k$, we can view our surface $Y$ as providing a relative type of N\'eron model, associated to the pair $(f,a)$.

Fix a choice of local canonical height functions $\{\hat\lambda_{f,\gamma}: \gamma \in X(\Kbar)\}$ on $\bP^1(k)$ as in \cite[\S3.5]{Silverman:dynamics}, so that $\hat{h}_f(a) = \sum_{\gamma \in X(\Kbar)} \hat{\lambda}_{f,\gamma}(a)$ for every $a \in \bP^1(k)$.  The local canonical height can be computed as 
	$$\hat{\lambda}_{f,\gamma}(a) = - \min\{0, \ord_\gamma(a)\}$$
at all but finitely many places $\gamma$; we enlarge the number field $K$ so that this finite set of places is contained in $X(K)$.  

Recall that, as in the statement of Theorem \ref{modification}, the curve $C_a$ is the section of $X\times \bP^1 \to X$ defined by $t \mapsto a_t$, for any $a \in \bP^1(k)$.  The curve $C_a^Y$ is its proper transform in $Y$.

\begin{proposition} \label{relative Neron}
Let $f: \bP^1 \to \bP^1$ be of degree $d > 1$, defined over a function field $k = K(X)$.  Fix $\gamma\in X(K)$.  Let $\pi:  Y \to X\times \bP^1$ be a birational morphism defined over the number field $K$, which is an isomorphism outside of the line $L_\gamma = \{\gamma\} \times \bP^1$.  Let $\{Y_{\gamma,i}\}_{i=1}^{m_\gamma}$ denote the irreducible components of $E_\gamma = \pi^{-1}(L_\gamma)$.  Let $\tilde{f}_Y$ be the induced map on $Y$ satisfying
$$\xymatrix{ Y \ar[d]_\pi \ar@{-->}[r]^{\tilde{f}_Y} & Y\ar[d]^\pi \\   X\times\bP^1  \ar@{-->}[r]^{\tilde{f}} & X\times\bP^1  }$$
and assume that $\tilde{f}_Y$ maps no component $Y_{\gamma, i}$ into an indeterminacy point of $\tilde{f}_Y$ in $E_\gamma$.  Then, there exist rational numbers $c_{\gamma,i} \in \bQ$, for $i = 1, \ldots, m_{\gamma}$,  so the following holds.  For each point $a \in \bP^1(k)$ such that the curve $C_{f^n(a)}^Y$ is disjoint from the indeterminacy locus $I(\tilde{f}_Y)\cap E_\gamma$ and the singular locus of $E_\gamma$ for every $n\geq 0$, the local geometric canonical height of $a$ at $\gamma$ is computed by 
 $$\hat{\lambda}_{f,\gamma}(a) = (C_a \cdot C_\infty)_\gamma + \sum_{i=1}^{m_{\gamma}}c_{\gamma,i} \; C_a^Y \cdot Y_{\gamma,i}$$
where $(C_a \cdot C_\infty)_\gamma$ is the intersection multiplicity of the curves $C_a$ and $C_\infty$ in $X\times \bP^1$ at $(\gamma, \infty)$. 
\end{proposition}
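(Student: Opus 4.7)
The plan is to define a candidate function
\[
\Lambda_\gamma(a) := (C_a \cdot C_\infty)_\gamma + \sum_i c_{\gamma,i} \, C_a^Y \cdot Y_{\gamma,i},
\]
with rationals $c_{\gamma,i}$ chosen once and for all (depending on $Y$ and $\gamma$ but not on $a$), and then verify $\Lambda_\gamma = \hat\lambda_{f,\gamma}$ by invoking the characterization of the local canonical height as the unique local Weil function for the pole divisor $(\infty)$ satisfying the Call-Silverman functional equation with the correct normalization. The first summand $(C_a\cdot C_\infty)_\gamma = -\min\{0, \ord_\gamma(a)\}$ is already a standard local Weil function for $(\infty)$ at $\gamma$, and the exceptional correction $\sum_i c_{\gamma,i}\, C_a^Y \cdot Y_{\gamma,i}$ takes only finitely many values in $a$ (depending on which smooth point of $E_\gamma$ the curve $C_a^Y$ hits), so $\Lambda_\gamma$ remains a Weil function for $(\infty)$.

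To pin down the $c_{\gamma,i}$, I would analyze the action of $\tilde f_Y^*$ on the $\bQ$-span $V$ of the exceptional components $\{Y_{\gamma,i}\}$. The hypothesis that $\tilde f_Y$ does not collapse any $Y_{\gamma,i}$ into $I(\tilde f_Y)\cap E_\gamma$ ensures that $\tilde f_Y^*Y_{\gamma,j}$ is a well-defined effective $\bQ$-divisor on $Y$ and that, modulo horizontal classes (the proper transform $C_\infty^Y$ and its associated vertical contribution from $\pi^*(X\times\{\infty\})$), it decomposes as $\sum_i M_{ij}\, Y_{\gamma,i}$ for a rational matrix $M$. The coefficients $c_{\gamma,i}$ are then defined as the solution of the linear system obtained by demanding the functional equation
\[
\Lambda_\gamma(f(a)) - d\,\Lambda_\gamma(a) = s_\gamma(a),
\]
where $s_\gamma(a)$ is the explicit local term appearing in the Call-Silverman functional equation for $\hat\lambda_{f,\gamma}(a)$. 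Solvability of this system reduces, by Zariski's lemma, to the negative-definiteness of the intersection matrix $\bigl(Y_{\gamma,i}\cdot Y_{\gamma,j}\bigr)$ on the components of $E_\gamma$ perpendicular to a fixed horizontal transversal, together with a consistency check that the obstruction lies in the image of this form.

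The key verification is the functional equation itself. For any $a$ whose orbit $\{f^n(a)\}_{n\geq 0}$ avoids both $I(\tilde f_Y)\cap E_\gamma$ and $\Sing(E_\gamma)$, the proper transform $C_a^Y$ is disjoint from $I(\tilde f_Y)$ so $(\tilde f_Y)_* C_a^Y = C_{f(a)}^Y$ as $1$-cycles, and the projection formula gives
\[
C_a^Y \cdot \tilde f_Y^* D \; = \; (\tilde f_Y)_* C_a^Y \cdot D \; = \; C_{f(a)}^Y \cdot D
\]
for every divisor $D$ supported on $E_\gamma$. Applying this with $D = \sum_i c_{\gamma,i}\, Y_{\gamma,i}$ and unfolding the linear system that defined the $c_{\gamma,i}$'s produces the desired identity $\Lambda_\gamma(f(a)) = d\,\Lambda_\gamma(a) + s_\gamma(a)$. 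Iterating this identity and telescoping $d^{-n}\Lambda_\gamma(f^n(a))$ against the corresponding telescoping for $\hat\lambda_{f,\gamma}$, we see that the difference $\eta := \Lambda_\gamma - \hat\lambda_{f,\gamma}$ satisfies $\eta(f(a)) = d\,\eta(a)$ on the orbit while being bounded, hence $\eta\equiv 0$.

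The main obstacle I anticipate is the bookkeeping required to match constants precisely: the rationals $c_{\gamma,i}$ must simultaneously cancel the discrepancy between $(C_a\cdot C_\infty)_\gamma$ and a bona fide Weil function for $(\infty)$ \emph{and} reproduce the exact local term $s_\gamma(a)$ coming from the decomposition $\hat h_f = \sum_\gamma \hat\lambda_{f,\gamma}$. Verifying that the linear system in Step 2 is both solvable over $\bQ$ and yields the Call-Silverman normalization — rather than merely some rescaled version — is the technical core, and is precisely what realizes $Y$ as a ``relative N\'eron model'' for the pair $(f,a)$ in the sense of Remark \ref{Neron}. The rationality of each $c_{\gamma,i}$ is what forces the local canonical height $\hat\lambda_{f,\gamma}(a)$ to be rational for hole-avoiding $a$, giving the $\bQ$-divisor claim of Theorem \ref{dyn VCH}.
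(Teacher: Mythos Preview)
Your route is genuinely different from the paper's. The paper does not set up a linear system via the functional equation or invoke the intersection form on $E_\gamma$. Instead it passes to the Berkovich picture: the components $Y_{\gamma,i}$ are identified with a vertex set $\Gamma$ of Type II points in $\bP^{1,an}_\gamma$, and the local height is written directly as
\[
\hat\lambda_{f,\gamma}(a) \;=\; -\min\{0,\ord_\gamma a\} \;-\; \sum_{n\ge 0}\frac{\sigma_n}{d^{n+1}},
\]
where $\sigma_n = \ord_\gamma F(A_n)$ for a normalized lift $A_n$ of $f^n(a)$. The crucial input, imported from \cite[Proposition 4.1, Theorem 4.2]{DG:rationality}, is that $\sigma_n$ depends only on which $F$-disk of $\Gamma$ contains $f^n(a)$, i.e.\ only on the component $Y_{\gamma,i}$ meeting $C_{f^n(a)}^Y$. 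The stability hypothesis then forces the sequence of visited components, and hence $\{\sigma_n\}$, to be eventually periodic, so the tail sum is a rational number depending only on the component hit by $C_a^Y$; that number is $c_{\gamma,i}$. No linear algebra on the intersection lattice is used.

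Your proposal has one genuine misstep and one real gap. The misstep is the appeal to Zariski's lemma: the negative-definiteness of $(Y_{\gamma,i}\cdot Y_{\gamma,j})$ is the tool for producing fiber corrections in the abelian-variety N\'eron-model story, but the system you actually need to solve here is $d\,c_i - c_{\sigma(i)} = b_i$, where $\sigma$ records which component the image section meets; its solvability comes from $d>1$ dominating the spectrum of the permutation-type transition (equivalently, from summing the eventually periodic series $\sum b_{\sigma^n(i)}/d^{n+1}$), not from the intersection form. The gap is that for this system to be well-posed you must know that the inhomogeneous term $b_i$, namely the discrepancy $(C_{f(a)}\cdot C_\infty)_\gamma - d\,(C_a\cdot C_\infty)_\gamma - s_\gamma(a)$, depends only on the component $Y_{\gamma,i}$ and not on where in that component $C_a^Y$ lands. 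This is exactly the content of the order-function results from \cite{DG:rationality} that the paper invokes, and it is not a formality: it is what makes the finitely many unknowns $c_{\gamma,i}$ suffice. Once you supply that ingredient, your projection-formula and telescoping argument does go through and is a legitimate alternative to the paper's direct summation.
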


Combined with Theorems \ref{modification} and \ref{stable modification}, we obtain:

\begin{theorem} \label{intersection computation}
Let $f: \bP^1 \to \bP^1$ be of degree $d > 1$, defined over a function field $k = K(X)$, and let $a \in \bP^1(k)$ be a totally Fatou point.  Extending the number field $K$ if necessary, let $Y$ be the surface of Theorem \ref{modification}, and let $\{c_{\gamma, i}\}$ be the rational numbers guaranteed by Proposition \ref{relative Neron} over each $\gamma \in X(K)$. Then the geometric canonical height of $a$ satisfies 
 $$\hat{h}_f(a) = \sum_{\gamma \in X(\Kbar)} \hat{\lambda}_{f,\gamma}(a) =  C_a \cdot C_\infty + \sum_{\gamma \in X(K)} \sum_{i=1}^{m_{\gamma}} c_{\gamma, i} \; C_a^Y \cdot Y_{\gamma,i}.$$
\end{theorem}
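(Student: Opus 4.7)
\medskip

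The plan is to split $\hat h_f(a) = \sum_{\gamma \in X(\Kbar)} \hat\lambda_{f,\gamma}(a)$ into two pieces according to whether $\gamma$ is a ``bad'' place or a ``good'' place, applying Proposition \ref{relative Neron} on the former and a trivial intersection identity on the latter. To set this up I would first enlarge $K$ so that: (i) the finitely many places where $\hat\lambda_{f,\gamma}(a) \neq -\min\{0,\ord_\gamma(a)\}$ lie in $X(K)$; (ii) all indeterminacy points of $\tilde f$ on $X\times\bP^1$ sit over $X(K)$; and (iii) Theorem \ref{modification} supplies a birational morphism $Y \to X\times \bP^1$, defined over $K$, with $\tilde f_Y$ algebraically stable and with $C_{f^n(a)}^Y$ disjoint from $I(\tilde f_Y)$ and meeting every singular fiber at smooth points, for every $n \ge 0$. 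Let $S \subset X(K)$ denote this finite ``bad'' set.

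For each $\gamma \in X(\Kbar)\setminus S$, the standard Call--Silverman normalization gives $\hat\lambda_{f,\gamma}(a) = -\min\{0,\ord_\gamma(a)\}$. This number vanishes when $a_\gamma \neq \infty$, so that $C_a$ and $C_\infty = X\times\{\infty\}$ do not meet over $\gamma$, and equals $-\ord_\gamma(a)$ when $a_\gamma = \infty$, which is precisely the local intersection multiplicity $(C_a \cdot C_\infty)_\gamma$ in $X\times\bP^1$. Over $\gamma \notin S$ the morphism $\pi: Y \to X \times \bP^1$ is an isomorphism, so there are no exceptional components to contribute and the formula of Proposition \ref{relative Neron} is trivially valid in the form $\hat\lambda_{f,\gamma}(a) = (C_a\cdot C_\infty)_\gamma$.

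For each $\gamma \in S$, the properties of $Y$ provided by Theorem \ref{modification} are exactly the hypotheses of Proposition \ref{relative Neron}: algebraic stability of $\tilde f_Y$ prevents any fibral component $Y_{\gamma,i}$ from being collapsed into $I(\tilde f_Y)$, and the globally Fatou hypothesis on $a$ ensures that every $C_{f^n(a)}^Y$ avoids $I(\tilde f_Y)\cap E_\gamma$ and meets $E_\gamma$ at smooth points. Proposition \ref{relative Neron} thus yields rational constants $c_{\gamma,i}$ with
$$\hat\lambda_{f,\gamma}(a) = (C_a\cdot C_\infty)_\gamma + \sum_i c_{\gamma,i}\, C_a^Y \cdot Y_{\gamma,i}.$$

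Summing both cases over all $\gamma \in X(\Kbar)$ gives
$$\hat h_f(a) = \sum_{\gamma \in X(\Kbar)} (C_a\cdot C_\infty)_\gamma \; + \; \sum_{\gamma \in S}\sum_i c_{\gamma,i}\, C_a^Y\cdot Y_{\gamma,i}.$$
The first sum is finite (only $\gamma$ with $a_\gamma = \infty$ contribute) and by bilinearity of intersection on the smooth projective surface $X\times\bP^1$ equals the global intersection number $C_a\cdot C_\infty$. The second sum may be harmlessly extended to all $\gamma \in X(K)$, since for $\gamma \notin S$ the fibre $E_\gamma$ has no exceptional components and the $c_{\gamma,i}$ terms do not appear. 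This produces exactly the identity in the statement. The main obstacle I expect is the verification at the ``good'' places: tying the choice of Call--Silverman local heights $\hat\lambda_{f,\gamma}$ to the intersection multiplicity $(C_a\cdot C_\infty)_\gamma$ requires matching the normalizations so that the constants $c_{\gamma,i}$ of Proposition \ref{relative Neron} are indeed zero (equivalently, $Y = X\times\bP^1$ locally) at $\gamma \notin S$; the remainder is a formal computation assembled from Theorem \ref{modification} and Proposition \ref{relative Neron}.
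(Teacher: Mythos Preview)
Your proposal is correct and follows essentially the same approach as the paper: split into good and bad places, apply Proposition \ref{relative Neron} at the bad places using the properties of $Y$ guaranteed by Theorem \ref{modification}, and sum. The paper's proof is terser (three sentences), noting only that the smoothness requirement in Proposition \ref{relative Neron} is supplied by Theorem \ref{stable modification} (which underlies the relevant clause of Theorem \ref{modification}); you have simply written out the good-place verification and the summation explicitly.
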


\begin{proof}
The theorem is almost immediate from Proposition \ref{relative Neron} and the statement of Theorem \ref{modification}, summing over all $\gamma \in X(\Kbar)$.  We only need the additional input of Theorem \ref{stable modification} that the orbit of $a$ will always lie in an $F$-disk for the vertex set $\Gamma'$.  This guarantees that the curves $C_{f^n(a)}^Y$ intersect the singular fibers only in their smooth points.  
\end{proof}

\begin{proof}[Proof of Proposition \ref{relative Neron}]
As for Theorem \ref{stable modification}, we continue to follow the arguments of \cite{DF:degenerations2}, and we also build on the machinery developed in \cite{DG:rationality}.   

We identify the components $Y_{\gamma,i}$ with a finite set of Type II points in the Berkovich space $\bP^{1,an}_\gamma$ over the field $\bL_\gamma$.  (We caution that some components $Y_i$ may be non-reduced, so we need to keep track of their multiplicities as well.)  See the discussion in, e.g., \cite[\S4]{DF:degenerations2}.  Let $\Gamma \subset \bP^{1,an}_\gamma$ be the union of this finite set of Type II points; note that $\Gamma$ must include the Gauss point of $\bP^{1,an}_\gamma$ because $\pi: Y \to X \times \bP^1$ is regular.

Suppose that $a\in \bP^1(k)$ is a point for which the curves $C^Y_{f^n(a)}$ are disjoint from the points of indeterminacy for $\tilde{f}_Y$ for all $n\geq 0$.  This means, as in \S\ref{Gamma stuff}, that $f^n(a)$ lies in an $F$-component for $\Gamma$ for every $n\geq 0$.  Fixing a homogeneous lift $F$ of $f$ so that $\ord_\gamma F = 0$, we define the order function $\sigma(F, \cdot)$ on $\bP^{1,an}_\gamma$ as in \cite[\S3.1]{DG:rationality}.  Specifically, for each $n\geq 0$, we let $A_n$ denote a homogeneous lift of $f^n(a) \in \bP^1(k)$ so that $\ord_\gamma A_n = 0$, and then
	$$\sigma_n := \sigma(F, f^n(a)) = \ord_\gamma F(A_n).$$
From \cite[Lemma 3.1]{DG:rationality}, the local canonical height at $\gamma$ (associated to this choice of $F$) can be computed as
	$$\hat{\lambda}_{f,\gamma}(a) = - \min\{0, \ord_\gamma(a)\} - \sum_{n = 0}^\infty \frac{\sigma_n}{d^{n+1}}.$$
The key observation is contained in \cite[Proposition 4.1, Theorem 4.2]{DG:rationality}:  for a point $a$ that lies in an $F$-disk component of $\bP^{1,an}_\gamma\setminus \Gamma$, the order function depends only on the boundary point of that $F$-disk.  When each iterate of $a$ lies in an $F$-disk, the sequence $\sigma_n$ depends only on the sequence of boundary points of these $F$-disks containing $f^n(a)$, over all $n\geq 0$.  However, by the stability of the pair $(f, \Gamma)$, these sequences, in turn, depend only on the boundary point of the disk containing $a$ itself.  Indeed, every $F$-disk with boundary point $\zeta$ will map into an $F$-disk with the same boundary point.  Moreover, the order function can only take finitely many possible values on the $F$-disks of $\Gamma$ (by \cite[Theorem 4.2]{DG:rationality}) and the stability of $(f,\Gamma)$ implies that the sequence $\{\sigma_n\}$ will be eventually periodic.  In other words, the sequence $\{\sigma_n\}$ depends only on the component $Y_{\gamma,i}$ that intersects $C_a^Y$ in $E_\gamma$.  The coefficient $c_{\gamma,i}$ is rational because the sequence $\{\sigma_n\}$ is eventually periodic.
\end{proof}

\bigskip
\section{Near a singularity:  uniform convergence to the escape rate}
\label{continuity}

In this section, we fix $\gamma \in X(K)$.  We assume that we are given $f: \bP^1\to \bP^1$ of degree $d \geq 2$, defined over $k = K(X)$, and a point $a \in \bP^1(k)$. We choose lifts $F$ and $A$, as defined in \S\ref{lifts},with $\ord_\gamma F = \ord_\gamma A = 0$ and we assume that 
	$$\Res(F_\gamma) = 0.$$
We also assume that the pair $(f,a)$ is hole-avoiding at $\gamma$, as defined in \S\ref{hole-avoiding}, so that 
	$$F_\gamma^n(A_\gamma) \not= (0,0)$$
for all $n\geq 0$.   We set
	$$A_n := F^n(A) \in k^2$$
and we study the convergence of the sequence of functions 
\begin{equation} \label{g_n}
	g_{n,v}(t) := \frac{1}{d^n} \log\|(A_n)_t\|_v
\end{equation}
in a neighborhood of $t=\gamma$ in $X(\bC_v)$, for each $v\in M_K$.  We prove:

\begin{theorem}  \label{uniform convergence}
Fix $\gamma \in X(K)$ and a hole-avoiding pair $(f,a)$ at $\gamma$ with lifts $F$ and $A$ satisfying $\ord_\gamma F = \ord_\gamma A = 0$ and $\Res(F_\gamma) = 0$.  There exists an $M_K$-neighborhood $\mathfrak{U}$ of $\gamma$ in $X$ so that, for each $v \in M_K$, the functions $g_{n,v}$ converge uniformly on $\mathfrak{U}_v$ to a continuous function $g_v$.  
\end{theorem}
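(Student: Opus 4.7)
The plan is to show $\{g_{n,v}\}$ is uniformly Cauchy on an $M_K$-neighborhood of $\gamma$ via the telescoping identity
\[
g_{n+1,v}(t) - g_{n,v}(t) \;=\; \frac{1}{d^{n+1}}\log\frac{\|F_t((A_n)_t)\|_v}{\|(A_n)_t\|_v^d},
\]
so that the uniform limit of the continuous $g_{n,v}$'s is itself continuous. First I would record that the hole-avoiding hypothesis, together with $\ord_\gamma F = \ord_\gamma A = 0$, inductively forces $\ord_\gamma A_n = 0$ for every $n$: if $\ord_\gamma A_n = 0$ and $(A_n)_\gamma$ is not a hole of $F_\gamma$, then $F_\gamma((A_n)_\gamma) \neq (0,0)$ gives $\ord_\gamma A_{n+1} = 0$. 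Hence each $A_n$ is a section $X \to \A^2 \setminus \{0\}$ regular at $\gamma$, with $(A_n)_\gamma \neq 0$.

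The upper bound of Proposition \ref{F v bounds}, applied with $\beta = 1$ since $\ord_\gamma F = 0$, yields an $M_K$-neighborhood $\mathfrak{U}$ of $\gamma$ and an $M_K$-constant $\mathfrak{b}$ such that $\|F_t(z,w)\|_v \leq e^{\mathfrak{b}_v}\|(z,w)\|_v^d$ on $\mathfrak{U}_v$, uniformly in $(z,w) \in \bC_v^2 \setminus \{0\}$. This gives immediately $g_{n+1,v}(t) - g_{n,v}(t) \leq \mathfrak{b}_v/d^{n+1}$, summable to an $M_K$-constant.

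The matching lower bound is the core of the argument. The direct lower bound of Proposition \ref{F v bounds} degenerates by a factor $|\xi_\gamma(t)|_v^{-\ord_\gamma\Res F/(2g+1)}$ as $t\to\gamma$, so it is insufficient. The idea is to exploit the factorization $F_\gamma = h\cdot\tilde F$ of the degenerate specialization, in which $h$ is the gcd of the components of $F_\gamma$ (a homogeneous polynomial of some degree $e \geq 1$) and $\tilde F$ has nonzero resultant. Writing $F_t = h\cdot\tilde F + O(\omega_\gamma(t))$ and evaluating on $(A_n)_t$, one should obtain
\[
\frac{\|F_t((A_n)_t)\|_v}{\|(A_n)_t\|_v^d} \;\gtrsim\; \frac{|h((A_n)_t)|_v}{\|(A_n)_t\|_v^e}\cdot\frac{\|\tilde F((A_n)_t)\|_v}{\|(A_n)_t\|_v^{d-e}},
\]
where the second factor is bounded below by a positive $M_K$-constant (because $\tilde F$ has nonzero resultant), and the first factor is a continuous function of the projective class of $(A_n)_t$ that vanishes exactly at the finitely many holes of $F_\gamma$. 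Hole-avoiding says $[(A_n)_\gamma]$ avoids these holes for every $n$; the task is to upgrade this nonvanishing into a uniform positive lower bound.

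The main obstacle will be achieving this uniformity in $n$. For each individual $n$, continuity of $h$ and the fact that $[A_{n,\gamma}]$ is not a zero of $h$ suffice. The real difficulty is that the orbit $\{[A_{n,\gamma}]\}_n$ in $\bP^1(\Kbar)$ may be infinite and could, a priori, $v$-adically accumulate at a hole. To rule this out, I would combine the $\gamma$-adic integrality of the $A_n$ (encoded by $\ord_\gamma A_n = 0$) with an adelic or Weil-height argument in the spirit of Proposition \ref{global bound}, transferring $\gamma$-adic tameness to a uniform $v$-adic separation of the orbit from the holes. Once the uniform separation $|h((A_n)_t)|_v/\|(A_n)_t\|_v^e \geq c_v > 0$ is established, combining with the upper bound yields $|g_{n+1,v}(t) - g_{n,v}(t)| \leq C_v/d^{n+1}$ with $C_v$ an $M_K$-constant, uniformly on $\mathfrak{U}_v$; the telescoping sum then converges uniformly, and the limit $g_v$ is continuous.
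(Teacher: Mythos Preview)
Your approach has a genuine gap: the uniform lower bound you seek, $|h((A_n)_t)|_v/\|(A_n)_t\|_v^e \geq c_v > 0$, is false in general, and no height-theoretic transfer from $\gamma$-adic tameness to $v$-adic separation can rescue it. Take the example of Remark~\ref{quasi adelic}: $f(z)=z(z+1)/(z+t)$ over $\bQ(t)$ at $\gamma=0$, with $F_0(z,w)=(z(z+w),zw)$, $H(z,w)=z$, $\hat F(z,w)=(z+w,w)$, hole at $(0:1)$, and $A=(1,1)$. One computes $(A_n)_0=C_n\,(n+1,1)$ with $C_{n+1}=C_n^2(n+1)$, so the projective orbit $[(A_n)_0]=(n{+}1:1)$ accumulates $p$-adically at the hole $(0:1)$ for every prime $p$. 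Concretely, $\|(A_{n+1})_0\|_p/\|(A_n)_0\|_p^2=|n{+}1|_p$, hence
\[
g_{n+1,p}(0)-g_{n,p}(0)=\frac{1}{2^{n+1}}\log|n{+}1|_p,
\]
and at $n{+}1=p^k$ the bracketed quantity $\log|n{+}1|_p=-k\log p$ is unbounded. So the telescoping increments are \emph{not} $O(d^{-n})$ with a uniform constant; your Cauchy estimate $|g_{n+1,v}-g_{n,v}|\le C_v/d^{n+1}$ fails already at the center $t=\gamma$. The hole-avoiding hypothesis is a statement about the $\gamma$-adic place of $k=K(X)$ and carries no information about the $v$-adic distribution of the specializations $(A_n)_\gamma\in K^2$ for $v\in M_K$.

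The paper's proof avoids this trap by \emph{not} bounding the individual increments. Instead it first establishes convergence at the center, $\alpha_v=\lim_n d^{-n}\log\|(A_n)_\gamma\|_v$ (Proposition~\ref{alpha}), exploiting the factorization \eqref{An0} to see that the sum $\sum_n d^{-n}\log|H(\hat F^n(A_\gamma))|_v$ converges even though its terms are not uniformly bounded. Then, for $t$ near $\gamma$, it compares $A_n(t)$ to $A_n(0)$ via the Maximum Principle applied to $u^{-1}(A_n(u)-A_n(0))$, combined with the crude bound \eqref{basic} from Proposition~\ref{F v bounds}. The argument partitions the punctured disk into shrinking annuli $\delta_{m+1}\le|u|\le\delta_m$ calibrated to the rate at which $\|A_m(0)\|_v\to 0$, balancing the degeneration in the resultant bound against the control at the center. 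This is the mechanism you are missing.
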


Note that the limit function $g_v$ coincides with the escape-rate function $G_{F_t,v}(A_t)$ defined by \eqref{arithmetic escape} in \S\ref{aer}, for $t\not=\gamma$.  So we know that the convergence of $g_{n,v}$ to $g_v$ is uniform on neighborhoods where $t$ remains {\em bounded away} from $\gamma$ and the other singularities of $f$.   The steps in the proof of Theorem \ref{uniform convergence} are inspired by the arguments in \cite{DWY:Lattes}, \cite{DWY:QPer1}, and \cite{Mavraki:Ye}.

\subsection{Convergence of the constant terms $g_{n,v}(\gamma)$}

\begin{proposition}  \label{alpha}
Fix $\gamma \in X(K)$.  Under the hypotheses of Theorem \ref{uniform convergence}, the limit 
	$$\alpha_v := \lim_{n\to\infty} \frac{1}{d^n} \log\|(A_n)_\gamma\|_v$$
exists in $\R$, for all $v\in M_K$.  Moreover, we have 
	$$\sum_{v\in M_K} N_v |\alpha_v| < \infty.$$
In other words, $\{\alpha_v: v \in M_K\}$ defines an $M_K$-quasiconstant.
\end{proposition}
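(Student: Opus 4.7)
The plan is to isolate the singularity of $F_\gamma$ by factoring out the common divisor of its coordinate polynomials, reducing the analysis to the dynamics of a quotient morphism of strictly smaller degree. Decompose $F_\gamma = H \cdot G$ over $K$, where $H \in K[z,w]$ is the greatest common divisor of the two coordinate polynomials of $F_\gamma$, of degree $e \geq 1$ (using $\Res(F_\gamma) = 0$), and $G \in K[z,w]^2$ is the resulting homogeneous map of degree $d' = d - e \leq d - 1$; by construction $\Res(G) \neq 0$ when $d' \geq 1$, so $G$ lifts a morphism $g_\gamma' : \bP^1 \to \bP^1$ of degree $d'$, while $G$ is a nonzero constant vector when $d' = 0$. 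A direct induction on $n$, using the homogeneity of $H$ and $G$, yields the closed-form factorization
\[
F_\gamma^n(x) \;=\; \prod_{k=0}^{n-1} H(G^k(x))^{d^{n-1-k}} \cdot G^n(x).
\]
Set $c_k := H(G^k(A_\gamma)) \in K$. The hole-avoiding hypothesis $F_\gamma^{k+1}(A_\gamma) \neq (0,0)$ forces $c_k \neq 0$ for every $k \geq 0$. Taking the $v$-adic norm of both sides and dividing by $d^n$ gives
\[
\frac{1}{d^n}\log\|(A_n)_\gamma\|_v \;=\; \sum_{k=0}^{n-1}\frac{\log|c_k|_v}{d^{k+1}} + \frac{\log\|G^n(A_\gamma)\|_v}{d^n}.
\]

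Next, control each summand at a fixed place $v$. Because $G$ is a regular homogeneous map (or nonzero constant) of degree $d' < d$, Proposition \ref{F v bounds} yields $\log\|G^n(A_\gamma)\|_v = O_v((d')^n)$, so the second summand tends to $0$. For the first, use that $c_k \in K^*$ together with the product formula to obtain
\[
\sum_{v \in M_K} N_v \, \bigl|\log|c_k|_v\bigr| \;=\; 2[K:\bQ]\, h(c_k),
\]
where $h$ denotes the absolute Weil height on $\bP^1(\Qbar)$. Standard height-under-composition estimates applied to the polynomial maps $G^k$ and $H$ give $h(c_k) = O((d')^k)$ when $d' \geq 2$, $h(c_k) = O(k)$ when $d' = 1$, and $h(c_k) = O(1)$ when $d' = 0$; in every case $\sum_k h(c_k)/d^{k+1}$ converges since $d' < d$. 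The pointwise bound $\bigl|\log|c_k|_v\bigr| \leq (2[K:\bQ]/N_v)\,h(c_k)$ then forces the series $\sum_k \log|c_k|_v/d^{k+1}$ to converge absolutely at every individual $v$, and combining with the previous observation yields
\[
\alpha_v \;=\; \sum_{k=0}^\infty \frac{\log|c_k|_v}{d^{k+1}} \;\in\; \R
\]
for every $v \in M_K$.

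The $M_K$-quasiconstant property follows by Tonelli:
\[
\sum_v N_v |\alpha_v| \;\leq\; \sum_{k=0}^\infty \frac{1}{d^{k+1}} \sum_v N_v \bigl|\log|c_k|_v\bigr| \;=\; \sum_{k=0}^\infty \frac{2[K:\bQ]\, h(c_k)}{d^{k+1}} \;<\; \infty.
\]
The main obstacle is deriving the closed-form factorization of $F_\gamma^n$ in the first paragraph; once that is in hand, everything is driven by the single quantitative input $d' < d$, which forces geometric decay of all the relevant series. The height-growth estimates are routine applications of height-under-composition inequalities together with the product formula.
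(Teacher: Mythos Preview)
Your proof is correct and follows essentially the same approach as the paper: both factor $F_\gamma = H \cdot G$ with $\deg G = d' < d$, derive the identical closed form $F_\gamma^n = \bigl(\prod_{k} H(G^k)^{d^{n-1-k}}\bigr)\, G^n$, and control the resulting series using the product formula and the gap $d' < d$. Your summability step via the identity $\sum_v N_v\,|\log|c_k|_v| = 2[K:\bQ]\,h(c_k)$ together with height-under-composition bounds is a clean repackaging of the paper's argument, which instead isolates the finite set $S_+$ of places where the upper bound on $|c_k|_v$ can exceed $1$ and bounds the complementary sum from below; one minor point is that the estimate $\log\|G^n(A_\gamma)\|_v = O_v((d')^n)$ comes from the standard bound for a regular homogeneous map over $K$ (as in \cite[Proposition~5.57]{Silverman:dynamics}), not from Proposition~\ref{F v bounds}, which treats the family $F_t$ over $X$.
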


\begin{remark}  \label{quasi adelic}
For each fixed $n$, we have $\|(A_n)_\gamma\|_v = 1$ for all but finitely many $v$.  But as $n$ grows, the number of places for which $\|(A_n)_\gamma\|_v \not=1$ can also grow, so that $\alpha_v$ can be nonzero for infinitely many $v \in M_K$.  A simple example is given by the function $f(z) = z(z+1)/(z+t)$ defined over $k = \bQ(t)$, which is similar to Example \ref{translation example}, at $t=0$.  Take $a = 1$.  Fix homogeneous polynomial lift $F(z,w) = (z(z+w), (z+tw)w)$, so that $F_0(z,w) = (z(z+w), zw)$, and set $A = A_0 = (1,1)$.  Then for every prime $p$, we have $\|(A_n)_0\|_p = 1$ for all $n< p$, and $\|(A_n)_0\|_p < 1$ for all $n\geq p$. We show below in \eqref{decreasing} that this will imply that the limit $\alpha_p$ of Proposition \ref{alpha} will be negative for all primes $p$. Many more examples are given in \cite{Mavraki:Ye}.  Bear in mind that this does not happen for Latt\`es examples (the maps arising as quotients of endomorphisms of elliptic curves) or for polynomials; in other words, for those types of maps, the $\alpha_v$ of Proposition \ref{alpha} always define an $M_K$-constant.  
\end{remark}

\begin{proof}[Proof of Proposition \ref{alpha}]
Since $\Res(F_\gamma) = 0$, specializing $F$ at $\gamma$, we can write 
	$$F_\gamma = H \, \hat F$$
where $H(z,w) \in K[z,w]$ is a nonconstant homogeneous polynomial of degree $k \leq d$, and $\hat F (z,w) \in (K[z,w])^2$ is a homogeneous polynomial map of degree $\ell = d - k < d$ inducing a morphism of degree $\ell$ on $\bP^1$.   The zeroes of $H$ in $\bP^1$ are called the holes of $f$ at $\gamma$, as defined in \S\ref{hole-avoiding}.  

Because the pair $(f,a)$ is hole-avoiding, the lift $A$ satisfies $F^n_\gamma(A_\gamma) \not= (0,0)$ for all $n$.  So it must be that either $\ell >0$ or, if $\ell =0$, the value of $\hat F$ is not a root of $H$.  Consequently, as in \cite[Lemma 2.2]{D:measures}, the specialization of each iterate $F^n$ can be expressed in terms of $H$ and $\hat F$ by
\begin{equation} \label{F0 iterates}
	F_\gamma^n(z,w) = \left(\prod_{i=0}^{n-1} H(\hat F^i(z,w))^{d^{n-1-i}} \right)  \hat F^n(z,w)
\end{equation}
for all $n\geq 1$.  In particular, this shows that 
\begin{equation} \label{An0}
	(A_n)_\gamma = \left(\prod_{i=0}^{n-1} H(\hat F^i(A_\gamma))^{d^{n-1-i}} \right)  \hat F^n(A_\gamma)
\end{equation}
for every $n$.  

For $\ell = 0$, the map $\hat F$ is constant, so $\hat F^n(A_\gamma) = (z_0, w_0) \in K^2 \setminus\{(0,0)\}$ for some point $(z_0,w_0)$ and for all $n\geq 1$.  The formula \eqref{An0} gives 
\begin{eqnarray*} 
\frac{1}{d^n} \log\|(A_n)_\gamma\|_v &=& \frac1d \log|H(A_\gamma)|_v +  \sum_{i=1}^{n-1} \frac{1}{d^{i+1}} \log|H(z_0,w_0)|_v + \frac{1}{d^n} \log\|(z_0,w_0)\|_v \\
&\longrightarrow&  \frac1d \log|H(A_\gamma)|_v+  \frac{1}{d(d-1)} \log|H(z_0,w_0)|_v
\end{eqnarray*}
as $n\to\infty$, for all places $v$ of $K$.  The statements of the proposition follow immediately in this case.

Now assume that $\ell \geq 1$.  There exists an $M_K$-constant $\mathfrak{L}$ so that 
	$$e^{-\mathfrak{L}_v} \|(z,w)\|_v^\ell \leq \|\hat F(z,w)\| \leq e^{\mathfrak{L}_v} \|(z,w)\|_v^\ell$$
for all $(z,w) \in \Kbar^2$ and for all $v \in M_K$ \cite[Proposition 5.57]{Silverman:dynamics}.  This implies that 
\begin{equation}  \label{An0 F hat}
	e^{-\mathfrak{L}_v(1+\ell+\cdots+\ell^{n-1})} \|A_\gamma\|_v^{\ell^n} \leq \|\hat F^n(A_\gamma)\|_v \leq 
	e^{\mathfrak{L}_v(1+\ell+\cdots+\ell^{n-1})} \|A_\gamma\|_v^{\ell^n}
\end{equation}
so that
\begin{equation} \label{easy part}
	\lim_{n\to\infty} \frac{1}{d^n} \log\|\hat F^n(A_\gamma)\|_v = 0
\end{equation}
for all $v \in M_K$, because $\ell < d$. 

Recalling that $\deg H = k = d - \ell$, there is also an $M_K$-constant $\mathfrak{H}$ so that 
	$$|H(z,w)|_v \leq e^{\mathfrak{H}_v} \| (z,w)\|^k_v$$
for all $(z,w) \in K^2$.  So
\begin{equation} \label{upper bound on H}
	|H(\hat F^i(A_\gamma))|_v \leq e^{\mathfrak{H}_v} \|\hat F^i(A_\gamma)\|_v^k \leq e^{\mathfrak{H}_v}e^{k\mathfrak{L}_v(1+\ell+\cdots+\ell^{i-1})} \|A_\gamma\|_v^{k\ell^i}
\end{equation}
at all places $v$ and for all $i\geq 1$.  Note that the bound on the right side of \eqref{upper bound on H} can be $>1$ at only finitely many places $v$ of $K$, independent of $i$.  Let $S_+$ denote this finite set of places. Therefore, since $H(\hat F^i(A_\gamma)) \in K^*$ for all $i$, we can apply the product formula to observe that there is a constant $c > 0$ so that 
\begin{equation} \label{hole part}
	\prod_{v \not\in S_+ }  |H(\hat F^i(A_\gamma))|_v^{N_v} \geq c^{\max\{i, \ell^i\}} 
\end{equation}
for all $i\geq 1$.  Using the formula \eqref{An0}, we combine \eqref{easy part} with \eqref{upper bound on H} and \eqref{hole part} to deduce the existence of
\begin{equation} \label{alpha as a sum}
\alpha_v = \lim_{n\to\infty} \frac{1}{d^n} \log\|(A_n)_\gamma\|_v =  \sum_{i=0}^\infty \frac{1}{d^{i-1}} \log|H(\hat F^i(A_\gamma))|_v
\end{equation}
at every place $v$, because $\ell < d$.

From \eqref{upper bound on H} and the summation expression for $\alpha_v$ in \eqref{alpha as a sum}, we see that $\alpha_v \leq 0$ for all $v \not\in S_+$.  To show that the sum over all places of the $\alpha_v$ is finite, we use \eqref{hole part} to estimate
	$$\frac{1}{d^{i-1}} \sum_{v \not\in S_+} N_v \log |H(\hat F^i(A_\gamma))|_v \geq \frac{\max\{i, \ell^i\}}{d^{i-1}} \log c$$
for each $i\geq 1$.  Summing over all $i$, we can then use Fubini's theorem to deduce that
	$$\sum_{v \not\in S_+} N_v \, \alpha_v  > -\infty,$$  
so that 
	$$\sum_{v\in M_K} N_v |\alpha_v| < \infty.$$
This completes the proof of the proposition.
\end{proof}

\subsection{Proof of Theorem \ref{uniform convergence}}  
Throughout this proof, we work in an $M_K$-neighborhood $\mathfrak{U}$ of $\gamma \in X(K)$, so that the conclusion of Proposition \ref{F v bounds} holds.  For simplicity, we let $u \in K(X)$ denote a choice of local coordinate on $X$ near $\gamma$ so that $u=0$ represents $\gamma$. 

We now fix $v \in M_K$, and we drop the dependence on $v$ to ease notation. Let $\delta$ denote the $v$-adic radius of the largest disk $\{|u|_v < \delta\}$ contained in the $M_K$-neighborhood $\mathfrak{U}_v$. Let $C = e^{\mathfrak{b}_v} \geq  1$ be the constant appearing in Proposition \ref{F v bounds} at this place.  For each $n$, we write $A_n(u)$ for the specialization of $A_n = F^n(A)$ at $u$. For every $n\geq m$, we define 
\begin{eqnarray*} 
g_n(u) &=&  \frac{1}{d^n}\log\|A_n(u)\| \\
	&=&  \frac{1}{d^m}\log\|A_m(u)\| + \frac{1}{d^m} \sum_{j = 1}^{n-m} \frac{1}{d^j} \log \frac{\|A_{m+j}(u)\|}{\|A_{m+j-1}(u)\|^d} 
\end{eqnarray*}
for $|u|<\delta$. Let $q = \ord_\gamma \Res(F)$.  
From Proposition \ref{F v bounds}, we have
\begin{equation} \label{basic}
g_m(u) + \frac{1}{d^m} \left( q \log|u| - \log C \right) ~ \leq  ~ g_n(u) ~ \leq ~ g_m(u) + \frac{1}{d^m}\log C
\end{equation}
for all $n\geq m \geq 0$ and for all $|u| < \delta$.  Let 
	$$\alpha = \lim_{n\to\infty} \frac{1}{d^n} \log\|A_n(0)\|;$$
its existence is guaranteed by Proposition \ref{alpha}.  

\smallskip\noindent
{\bf Step 1: a choice of $N$ and $\delta_N$ for a uniform upper bound.}  Fix $\eps>0$.  Choose $N$ so that we have 
\begin{equation} \label{large n}
e^{d^n(\alpha-\eps)} \leq \|A_n(0)\|  \leq  e^{d^n(\alpha+\eps)}
\end{equation} 
for all $n\geq N$, and so that
\begin{equation} \label{N for C}
 \frac{1}{d^N} \log C < \eps \quad\mbox{ and } \quad \left| \frac{1}{d^N} \log(1-\eps) \right| < \eps.
\end{equation}

Now choose $\delta_N > 0 $ so that, by continuity of $A_N(u)$, we have 
\begin{equation} \label{A_N control} 
	(1-\eps) e^{d^N(\alpha-\eps)} \leq \|A_N(u)\|  \leq e^{d^N(\alpha+2\eps)}
\end{equation}
for all $|u| \leq \delta_N$.  Applying the upper bound of \eqref{basic} and using \eqref{N for C}, this implies that
\begin{equation} \label{upper bound delta_N}
	g_n(u) ~ \leq ~  g_N(u) + \frac{1}{d^N} \log C  ~ \leq ~   \alpha + 3\eps
\end{equation}
for all $n\geq N$ and for all $|u|\leq \delta_N$.   

Note that the lower bound of \eqref{basic} is not enough to get uniform control on $g_n$ from below for $n\geq N$, because of the $\log|u|$ term.

\smallskip\noindent
{\bf Step 2:  the Maximum Principle and lower bounds within $\delta_n$.}  By the triangle inequality, we have 
	$$\|A_N(u) - A_N(0)\| \leq 2e^{d^N(\alpha+2\eps)}$$
for all $|u|\leq \delta_N$, from \eqref{large n} and \eqref{A_N control}.  Note that the coordinates of $A_N(u) - A_N(0)$ vanish at $t=0$, and so the Maximum Principle (applied to $\frac1u (A_N(u) - A_N(0))$) gives 
	$$\|A_N(u) - A_N(0)\| ~ \leq ~ \frac{|u|}{\delta_N} \; 2e^{d^N(\alpha+2\eps)}$$
for all $|u| \leq \delta_N$.  For a non-archimedean Maximum Principle see e.g. \cite[Proposition 8.14]{BRbook}. Using the upper bound of \eqref{upper bound delta_N}, the same argument implies that 
\begin{equation}  \label{max princ}
	\|A_n(u) - A_n(0)\| ~ \leq ~ \frac{|u|}{\delta_N} \; 2e^{d^n(\alpha+3\eps)}
\end{equation}
for all $n\geq N$ and for all $|u| \leq \delta_N$.  This implies that 
\begin{eqnarray*} 
\|A_n(u) \| &\geq& \|A_n(0)\| - \|A_n(u) - A_n(0)\| \\
	&\geq&  e^{d^n(\alpha-\eps)} - \frac{|u|}{\delta_N} \; 2e^{d^n(\alpha+3\eps)} \\
	&=& e^{d^n(\alpha-\eps)}\left( 1 - \frac{|u|}{\delta_N} \; 2e^{d^n(4\eps)} \right) 
\end{eqnarray*}
for all $n \geq N$ and for all $|u| \leq \delta_N$.  

Now define 
\begin{equation} \label{delta_n}
	\delta_n :=   \frac{\delta_N \eps}{2 e^{4d^n\eps}}
\end{equation}
for all $n > N$.  So we have 
\begin{equation} \label{A_n for delta_n}
\|A_n(u) \| ~  \geq ~  e^{d^n(\alpha-\eps)}\left( 1 - \eps \right) 
\end{equation}  
for all $|u| \leq \delta_n$ and for all $n > N$.  Combined with the lower bound of \eqref{A_N control} and the condition on $N$ in \eqref{N for C}, this shows that 
\begin{equation} \label{g_n from below}
	g_n(u) \geq  \alpha - 2\eps
\end{equation}
for all $|u| \leq \delta_n$ and for all $n\geq N$.  

\smallskip\noindent
{\bf Step 3:  Choosing larger $N_0$ and completing the proof.}  From the definition of $\delta_n$, we see that
	$$\frac{1}{d^n} \log \delta_n = \frac{1}{d^n} \log(\delta_N \eps/2) - 4 \eps$$
for all $n > N$.  Now choose $n_0 > N$ so that 
	$$\left | \frac{1}{d^{n_0}} \log(\delta_N \eps/2)  \right|  < \eps.$$
Recall that the sequence $\{g_n\}$ converges uniformly on neighborhoods in $u$ that are bounded away from $u=0$ (and any other singularities for $f$ in $X$), so, by our choice of $M_K$-neighborhood, there exists $N_0 \geq n_0$ so that 
	$$| g_n - g_m| < \eps$$
for all $n, m \geq N_0$, uniformly on $\{ \delta_{n_0} \leq |u| < \delta\}$.

For $|u| \leq \delta_{n_0}$, we know that 
	$$g_n(u) \leq \alpha + 3\eps$$
for all $n\geq n_0$ by \eqref{upper bound delta_N}.  And we know that 
	$$g_n(u) \geq  \alpha - 2\eps$$ 
for all $|u| \leq \delta_n$ and for all $n\geq n_0$, by \eqref{g_n from below}.  On the other hand, for $\delta_n < |u| \leq \delta_{n_0}$ we can choose $n >m \ge n_0$ so that $\delta_{m+1} \leq |u| \leq \delta_m$ and then \eqref{basic} gives 
	$$g_n(u) \geq g_m(u) + \frac{q}{d^m} \log\delta_{m+1} - \frac{1}{d^m} \log C \geq \alpha - 2\eps - 5qd \eps - \eps $$
So, in particular, we have 
	$$\alpha - (3 + 5qd) \eps \leq g_n \leq \alpha + 3\eps$$
for all $n\geq N_0$ and for all $|u| \leq \delta_{n_0}$.  This completes the proof of uniform convergence.  
\qed

\subsection{A summable lower bound on a disk}  
We conclude this section with a consequence of Proposition \ref{alpha} and its proof that will be used to prove Theorem \ref{dyn VCH}.

\begin{proposition} \label{good bound on g_v}
Fix $\gamma\in X(K)$. Under the hypotheses of Theorem \ref{uniform convergence} and in the notation of Proposition \ref{alpha}, there exists a finite set $S_\gamma \subset M_K$ so that
	$$(d (\ord_\gamma \Res F) +1) \, \alpha_v \leq g_v(t) \leq 0$$
for every $v \not\in S_\gamma$ and all $t$ in an $M_K$-neighborhood of $\gamma$.
\end{proposition}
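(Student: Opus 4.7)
Set $q=\ord_\gamma\Res F$. I would prove the two inequalities separately after enlarging $S_\gamma$ to a finite subset of $M_K$ that controls every relevant $M_K$-quantity: all archimedean places, the finite set $S_+$ from Proposition~\ref{alpha} (so that $\alpha_v\le 0$ for $v\notin S_\gamma$), every place at which the $M_K$-constants $\mathfrak{b}_v$ of Proposition~\ref{F v bounds} are nonzero or at which the identity $|\omega_\gamma|_v^{2g+1}=|\xi_\gamma|_v^{-1}$ of Proposition~\ref{global bound} fails, and finally a finite set---\emph{independent of $n$}---ensuring that every iterate $A_n$ has $v$-adically integral Taylor expansion at $\gamma$. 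This last is possible because $F$ and $A$ have coefficients in the local ring $R_\gamma=\mathcal{O}_{X,\gamma}$ (thanks to $\ord_\gamma F=\ord_\gamma A=0$), and polynomial iteration preserves $v$-integrality in $R_\gamma$ uniformly in $n$.

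\textbf{Upper bound.} For $v\notin S_\gamma$ and $t$ in the $M_K$-neighborhood, Proposition~\ref{F v bounds} simplifies to $\|F_t(z,w)\|_v\le\|(z,w)\|_v^d$. Iterating from $\|A(t)\|_v\le 1$ gives $\|A_n(t)\|_v\le 1$, hence $g_{n,v}(t)\le 0$, and $g_v(t)\le 0$ in the limit.

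\textbf{Lower bound.} Iterating the companion inequality $\|F_t(z,w)\|_v\ge|\omega_\gamma(t)|_v^q\|(z,w)\|_v^d$ and passing to the Green-function limit yields
$$G_{F_t,v}(z,w)\ge\log\|(z,w)\|_v+\frac{q\log|\omega_\gamma(t)|_v}{d-1}.$$
Evaluating at $A_n(t)$ and using the functional equation $G_{F_t,v}(A_n(t))=d^n g_v(t)$ produces, for every $n\ge 0$,
$$g_v(t)\ge g_{n,v}(t)+\frac{q\log|\omega_\gamma(t)|_v}{d^n(d-1)}. \qquad(\star)$$
Because $\|A_0(\gamma)\|_v=1>|\omega_\gamma(t)|_v$ and the sequence $\|A_n(\gamma)\|_v$ is nonincreasing for $v\notin S_\gamma$, I would let $n_0\ge 0$ be the largest integer with $\|A_{n_0}(\gamma)\|_v>|\omega_\gamma(t)|_v$. (If the strict inequality holds for every $n$, uniform integrality gives $\|A_n(t)\|_v=\|A_n(\gamma)\|_v$ for all $n$ by the ultrametric inequality, so $g_v(t)=\alpha_v\ge (dq+1)\alpha_v$ directly.) In the finite case, uniform integrality gives $\|A_{n_0}(t)-A_{n_0}(\gamma)\|_v\le|\omega_\gamma(t)|_v$, which combined with $\|A_{n_0}(\gamma)\|_v>|\omega_\gamma(t)|_v$ forces $\|A_{n_0}(t)\|_v=\|A_{n_0}(\gamma)\|_v$ by the ultrametric inequality, hence $g_{n_0,v}(t)=g_{n_0,v}(\gamma)\ge\alpha_v$.

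Finally, the maximality of $n_0$ gives $d^{n_0+1}g_{n_0+1,v}(\gamma)\le\log|\omega_\gamma(t)|_v$; since $g_{n_0+1,v}(\gamma)\ge\alpha_v$, we obtain $\log|\omega_\gamma(t)|_v\ge d^{n_0+1}\alpha_v$. Substituting into $(\star)$ with $n=n_0$,
$$g_v(t)\ge\alpha_v+\frac{q\,d^{n_0+1}\alpha_v}{d^{n_0}(d-1)}=\alpha_v\cdot\frac{d(q+1)-1}{d-1},$$
and since $\alpha_v\le 0$ and $\tfrac{d(q+1)-1}{d-1}\le dq+1$ for $d\ge 2$, we conclude $g_v(t)\ge(dq+1)\alpha_v$. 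The main obstacle is the uniform-in-$n$ integrality of the Taylor expansions: without it, $v$-adic control of $A_n(t)-A_n(\gamma)$ is lost as $n$ grows and the breakpoint argument at $n_0$ fails. The hypothesis $\ord_\gamma F=\ord_\gamma A=0$ of Theorem~\ref{uniform convergence} is precisely what confines the iteration to $R_\gamma$, allowing a single finite exceptional set $S_\gamma$ to work uniformly in $n$.
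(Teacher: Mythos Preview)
Your proof is correct and follows essentially the same approach as the paper's: both arguments establish the upper bound by iterating $\|F_t(z,w)\|_v\le\|(z,w)\|_v^d$, and for the lower bound both choose a breakpoint index (your $n_0$ is the paper's $j$) by comparing $|\omega_\gamma(t)|_v$ with the decreasing sequence $\|A_n(\gamma)\|_v$, then use $v$-integrality (equivalently, the non-archimedean Maximum Principle applied to $A_n(u)-A_n(0)$) to transfer control from $u=0$ to nearby $u$. Your use of the full geometric series in $(\star)$ yields the intermediate constant $\tfrac{d(q+1)-1}{d-1}$, which is slightly sharper than the paper's direct bound of $dq+1$, but you then relax it to match the stated inequality.
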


\begin{proof}
We first let $S_\gamma$ be the finite set of places $v \in M_K$, including all archimedean places, at which the quantities $\mathfrak{L}_v$ and $\mathfrak{H}_v$ in the proof of Proposition \ref{alpha} differ from 0 and where $\|A_\gamma\|_v\not=1$.  It follows from the computations in Proposition \ref{alpha} (specifically, equation \eqref{upper bound on H} and \eqref{alpha as a sum}) that $\alpha_v \leq 0$ for all $v \not\in S_\gamma$.  

Recall the formula for $(A_n)_\gamma$ given in \eqref{An0}.  For all $v \not\in S_\gamma$, we have 
	$$\|(A_{n+1})_\gamma\|_v =\|F_\gamma((A_n)_\gamma)\|_v \leq  \|(A_n)_\gamma\|^d$$
so that
\begin{equation} \label{decreasing}
	\frac{1}{d^{n+1}} \log \|(A_{n+1})_\gamma\|_v \leq \frac{1}{d^n} \log \|(A_n)_\gamma\|_v
\end{equation}
for all $n$.  For all $v \in M_K\setminus S_\gamma$, we also have $\|\hat{F}^n(A_\gamma)\|_v = 1$ for all $n$.  So $\|(A_n)_\gamma\|_v < 1$ for some $n>0$ if and only if there exists $i < n$ so that $|H(\hat{F}^i(A_\gamma))|_v < 1$.  Furthermore, from \eqref{decreasing}, such an $n$ exists if and only if $\alpha_v < 0$, for each $v \not\in S_\gamma$.   

Now let $u$ denote a local coordinate on $X$ with $u=0$ representing $\gamma$.
From Proposition \ref{global bound}, the coefficients of $F$ and the coordinates of $A$ are $M_K$-bounded on an $M_K$-neighborhood of $\gamma$.  We enlarge $S_\gamma$ if needed to assume that these coefficients are $\leq 1$ in absolute value and so that the neighborhood is given by $\{|u|_v < 1\}$ for all $v \not\in S_\gamma$.  We further enlarge $S_\gamma$ to include all places at which the $M_K$-constant $\mathfrak{b}_v$ from Proposition \ref{F v bounds} differs from 1, and also so that $|u|_v^{2g+1} = |\xi^\gamma|_v$ for $v\notin S_{\gamma}$ on the $M_K$-neighborhood of $\gamma$ (applying Proposition \ref{global bound} to $u$).

Then, for all $v\not\in S_\gamma$, the upper bound on the coefficients of $F$ and the coordinates of $A$ gives 
\begin{equation} \label{bounded by 1}
	\|A_n(u)\|_v \leq 1
\end{equation}
for all $n \geq 1$ and for all $|u|_v < 1$.  This implies immediately that $g_v(u) \leq 0$ for all $|u|_v < 1$ with $v \not\in S_\gamma$, proving the desired upper bound of the proposition.

Moreover, for $v \not\in S_\gamma$ where $\alpha_v=g_v(0)=0$, we conclude from the Maximum Principle (applied to the subharmonic $g_v$) that $g_v(u) = 0$ for all $|u|_v < 1$, and the estimate of the proposition holds for these $v$.
For the rest of the proof, we fix $v \not\in S_\gamma$ with $\alpha_v < 0$, and choose minimal $m \geq 0$ so that $\|(A_{m+1})_\gamma\|_v < 1$.  Since $\|(A_n)_\gamma\|_v^{1/d^n}$ is a non-increasing sequence from \eqref{decreasing}, we see that $\|(A_n)_\gamma\|_v$ decreases to $0$ as $n\to\infty$, and 
\begin{equation}  \label{above alpha}
	\frac{1}{d^n} \log\|(A_n)_\gamma\|_v \geq \alpha_v
\end{equation}
for all $n$.    As $\|(A_m)_\gamma\|_v = 1$, the inequality \eqref{bounded by 1} implies (with the Maximum Principle) that $\|A_m(u)\|_v = 1$ for all $|u|_v < 1$.

Let $q = \ord_\gamma (\Res F)$.  Proposition \ref{F v bounds} then gives
\begin{equation} \label{t bound}
	\frac{1}{d^n} \log\|A_n(u)\|_v \geq \frac{1}{d^m} \log\|A_m(u)\|_v + \frac{q}{d^m} \log|u|_v =  \frac{q}{d^m} \log|u|_v
\end{equation}
for all $n \geq m$ and for all $|u|_v < 1$.  Therefore, for all $u$ satisfying $\|A_{m+1}(0)\|_v \leq |u|_v < 1$, we have
	$$\frac{1}{d^n} \log\|A_n(u)\|_v ~ \geq ~  \frac{q}{d^m} \log \|A_{m+1}(0)\|_v ~ \geq ~ qd \,\alpha_v$$
for all $n\geq m$, from \eqref{t bound} and \eqref{above alpha}.  This shows that $g_v(u) \geq \, qd\, \alpha_v$ where $|u|_v \geq \|A_{m+1}(0)\|_v$.

On the other hand, for $|u|_v < \|A_{m+1}(0)\|_v$, we can choose $j\geq m+1$ so that 
	$$\|A_{j+1}(0)\|_v \leq |u|_v < \|A_j(0)\|.$$
Then, writing $A_j(u) = A_j(0) + u R_j(u)$ for $u$ near $0$, we know that $\|R_j(u)\|_v \leq 1$ for all $|u|_v < 1$, by \eqref{bounded by 1} and the Maximum Principle, and therefore 
	$$\|A_j(u) \|_v = \|A_j(0) + u R_j(u)\|_v = \|A_j(0)\|_v$$
where $|u|_v < \|A_j(0)\|_v$.  Therefore, 
\begin{eqnarray*}
\frac{1}{d^n} \log\|A_n(u)\|_v &\geq& \frac{1}{d^j} \log\|A_j(u)\|_v + \frac{q}{d^j} \log|u|_v \\
	&\geq&  \frac{1}{d^j} \log\|A_j(0)\|_v + \frac{q}{d^j} \log\|A_{j+1}(0)\|_v \\
	&\geq& (1 + dq)\alpha_v
\end{eqnarray*}
for all $\|A_{j+1}(0)\|_v \leq |u|_v < \|A_j(0)\|$ and for all $n\geq j$.  This implies that $g_v(u) \geq (1 + dq)\alpha_v$ for $u$ values in this region.  Since $\|A_n(0)\|_v \to 0$ as $n\to\infty$, the proof of the lower bound on $g_v$ is complete, thus completing the proof of the proposition.
\end{proof}

\bigskip
\section{Proofs of Theorems \ref{dyn VCH} and \ref{local dyn VCH}}

In this section, we complete the proofs of Theorems \ref{dyn VCH} and \ref{local dyn VCH}.  We fix $f: \bP^1 \to \bP^1$ defined over the field $k = K(X)$, of degree $d>1$, and we assume that $a \in \bP^1(k)$ is totally Fatou for $f$.  Fix homogeneous lifts $F$ of $f$ and $A$ of $a$ as in \S\ref{lifts}.  Recall the definitions of the finite sets $\cS(F) \subset \cS(F,A)$ in $X(\Kbar)$ from \S\ref{lifts}, and that $K$ was enlarged (if necessary) so that $\cS(F,A) \subset X(K)$, as stated in Convention \ref{K choice}. Recall also the definitions of the escape rates $G_{F,\gamma}(A)$ and $G_{F_t, v}(A_t)$ given in \eqref{geometric escape} and \eqref{arithmetic escape}, respectively.  The divisor
	$$D = \sum_{\gamma \in X(K)} G_{F,\gamma}(A)  \; \gamma$$
on $X$ was defined in \eqref{D(F,A)}; its support lies in $\cS(F,A)$.  

For the choice of Weil height $h_D$ defined by \eqref{Weil height}, we now consider the difference 
\begin{multline*}
\hat{h}_{f_t}(a_t) - h_D(t)  = \\
\frac{1}{[K:\bQ]} \frac{1}{|\Gal(\Kbar/K)\cdot t|} \sum_{x \in \Gal(\Kbar/K)\cdot t} \sum_{v \in M_K}  N_v \left(  G_{F_x,v}(A_x) -  \frac{1}{2g+1} \sum_{\gamma \in X(K)} G_{F,\gamma}(A) \log^+|\xi^{\gamma}(x)|_v \right) 
\end{multline*} 
for $t \in X(\Kbar) \setminus \cS(F,A)$. For each place $v$ of $K$, we examine the function 
\begin{equation} \label{V at v}
V_v(t) =  G_{F_t,v}(A_t) -  \frac{1}{2g+1} \sum_{\gamma \in X(K)} G_{F,\gamma}(A) \log^+|\xi^\gamma(t)|_v
\end{equation}
on $X(\Kbar)$ and its extension to $X(\bC_v)$ and the Berkovich analytification $X^{an}_v$. Recall that the steps needed to complete the proofs were outlined in \S\ref{proof goals}.

\subsection{Changing coordinates and lifts} 
If we change the lifts $F$ and $A$, multiplying each by an element of $k^*$, it follows from \eqref{geometric change} and \eqref{arithmetic change} that
\begin{eqnarray} \label{adjustment}
G_{cF_t,v}(\alpha A_t) -  \frac{1}{2g+1} G_{c F,\gamma}(\alpha A) \log^+|\xi^\gamma(t)|_v &=&  
G_{F_t,v}(A_t) -  \frac{1}{2g+1} G_{F,\gamma}(A) \log^+|\xi^\gamma(t)|_v  \nonumber \\
&&  + ~ \frac{1}{d-1}\log|c_t \alpha_t^{d-1}|_v  +  \\
&& \frac{1}{(2g+1)(d-1)} \ord_\gamma (c\alpha^{d-1}) \log^+|\xi^\gamma(t)|_v \nonumber
\end{eqnarray}
for any choice of $\gamma \in X(K)$.  Moreover, the sum of the last two terms is $M_K$-bounded on an $M_K$-neighborhood of $\gamma$, as a consequence of Proposition \ref{global bound}.

If we conjugate $F$ by an element $B \in \mathrm{GL}_2(k)$, we have 
\begin{equation} \label{conjugation} 
		G_{BFB^{-1},\gamma}(B(A))  = G_{F,\gamma}(A) ~ \mbox{ and } ~
G_{(BFB^{-1})_t,v}(B(A)_t)  = G_{F_t,v}(A_t)
\end{equation}
from the definitions of the escape rates, for each $\gamma \in X(K)$, and each place $v$ of $K$ and all $t \in X(\Kbar) \setminus \big(\cS(F,A)\cup \cS(BFB^{-1}, B(A)) \cup \cS(B)\big)$.  Replacing $F$ or $A$ by an iterate gives 
\begin{equation} \label{iterate}
	G_{F^n,\gamma}(F^m(A)) = d^m \, G_{F,\gamma}(A) ~ \mbox{ and } ~ G_{F_t^n,v}(F^m(A)_t) = d^m\,  G_{F_t,v}(A_t)
\end{equation}
for all $n \geq 1$ and $m \geq 0$, again immediate from the definitions.

\subsection{The divisor $D=D(F,A)$ is a $\bQ$-divisor} \label{Q divisor}
We need to show that $G_{F,\gamma}(A) \in \bQ$ for each $\gamma \in \cS(F,A)$.  This is immediate from the following proposition. (It also follows from the statement of Proposition \ref{relative Neron}.) We present an alternative short argument in the following proposition. Recall that $k_\gamma$ denotes the completion of $k$ at $\gamma$.

\begin{proposition} \label{rational}
Let $f: \bP^1\to \bP^1$ be of degree $d \geq 2$, defined over $k = K(X)$, $\gamma$ a point in $X(K)$, and $a\in \bP^1(k_\gamma)$. If the point $a$ is an element of the non-archimedean Fatou set $\Omega_\gamma(f)$ at $\gamma$, then the geometric escape rate $G_{F,\gamma}(A)$ is a rational number, for any choice of lifts $F$ and $A$. 
\end{proposition}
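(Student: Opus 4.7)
The plan is to reduce via Theorem \ref{B point} to a hole-avoiding pair, to observe that hole-avoidance with normalized lifts forces the geometric escape rate to vanish, and then to track how the transformation formulas \eqref{geometric change}, \eqref{conjugation}, and \eqref{iterate} introduce only rational corrections.

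First, I will apply Theorem \ref{B point} to produce $B \in \mathrm{PGL}_2(k)$ and integers $n \geq 1$, $m \geq 0$ such that the pair $(Bf^nB^{-1}, B(f^m(a)))$ is hole-avoiding at $\gamma$. Fix a homogeneous matrix lift $\tilde B \in \mathrm{GL}_2(k)$ of $B$, and set $G := \tilde B \circ F^n \circ \tilde B^{-1}$ and $B' := \tilde B(F^m(A))$; these are homogeneous lifts of $Bf^nB^{-1}$ and $B(f^m(a))$ respectively. Choose $c \in k^*$ and $\beta \in k_\gamma^*$ with $\ord_\gamma c = -\ord_\gamma G$ and $\ord_\gamma \beta = -\ord_\gamma B'$, so that $(cG, \beta B')$ is a normalized lift of the hole-avoiding pair. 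Since the hole-avoiding property is independent of the choice of normalized lift, we have $(cG)_\gamma^N\bigl((\beta B')_\gamma\bigr) \neq (0,0)$ for every $N \geq 0$. Because specialization commutes with polynomial composition and preserves integrality in the local ring at $\gamma$, a straightforward induction gives $\ord_\gamma (cG)^N(\beta B') = 0$ for every $N$, whence $\mathcal{G}_{cG,\gamma}(\beta B') = 0$.

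Finally, conjugation-invariance and the iteration rule \eqref{conjugation}, \eqref{iterate} yield
$$\mathcal{G}_{G,\gamma}(B') \;=\; \mathcal{G}_{\tilde B F^n \tilde B^{-1},\gamma}\bigl(\tilde B(F^m(A))\bigr) \;=\; \mathcal{G}_{F^n,\gamma}(F^m(A)) \;=\; d^m\, \mathcal{G}_{F,\gamma}(A),$$
while \eqref{geometric change}, applied to the degree-$d^n$ map $G$, gives
$$0 \;=\; \mathcal{G}_{cG,\gamma}(\beta B') \;=\; \mathcal{G}_{G,\gamma}(B') \,-\, \frac{\ord_\gamma c}{d^n - 1} \,-\, \ord_\gamma \beta.$$
Combining the two displays produces
$$\mathcal{G}_{F,\gamma}(A) \;=\; \frac{\ord_\gamma c}{d^m(d^n - 1)} \,+\, \frac{\ord_\gamma \beta}{d^m} \;\in\; \bQ,$$
since $\ord_\gamma c, \ord_\gamma \beta \in \bZ$. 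There is no serious obstacle here: the substantive dynamical input is already packaged inside Theorem \ref{B point}, and once the hole-avoiding reduction is in place the rest is bookkeeping with the rescaling rules, whose denominators are entirely controlled by the integers $d$, $m$, and $n$.
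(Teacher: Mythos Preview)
Your proof is correct and takes essentially the same approach as the paper: reduce to a hole-avoiding pair via Theorem \ref{B point}, observe that normalized lifts of a hole-avoiding pair have vanishing escape rate, and then unwind the rational corrections coming from \eqref{geometric change}, \eqref{conjugation}, and \eqref{iterate}. The paper presents the hole-avoiding case first as a standalone computation and then reduces the general case to it, while you fold both into a single argument by invoking Theorem \ref{B point} at the outset; the content is the same.
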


\begin{remark}  In \cite{DG:rationality} it was shown, for maps $f$ defined over $k$, that there can exist points $a \in \bP^1(k_\gamma)$ with irrational local canonical height.  Proposition \ref{rational} implies that these points must always lie in the non-archimedean Julia set of $f$ at $\gamma$.  We provide examples in Section \ref{example section}.  It is not known if the Julia points can be algebraic over $k$.  
\end{remark}

\begin{proof}
If the pair $(f,a)$ is hole-avoiding at $\gamma$, as defined in \S\ref{hole-avoiding}, and if $\beta F$ and $\alpha A$ are lifts of $f$ and $a$, respectively, so that $\ord_\gamma \beta F = \ord_\gamma \alpha A = 0$ for $\alpha, \beta \in k^*$, then 
	$$G_{F,\gamma}(A) = G_{\beta F, \gamma}( \alpha A) + \frac{1}{d-1} \ord_\gamma \beta + \ord_\gamma \alpha$$
from \eqref{geometric change}, so that 
	$$G_{F,\gamma}(A) = 0 + \frac{1}{d-1} \ord_\gamma \beta + \ord_\gamma \alpha  ~ \in ~ \bQ$$
because $\ord_\gamma (\beta F)^n(\alpha A) = 0$ for all $n \geq 0$.

If the pair $(f,a)$ is not hole-avoiding at $\gamma$, then Theorem \ref{B point} implies the existence of a change of coordinates $B \in \mathrm{GL}_2(k)$ and iterates so that the pair $(Bf^nB^{-1}, B(f^m(a)))$ is hole-avoiding at $\gamma$.   The conclusion then follows from \eqref{conjugation} and \eqref{iterate}. 
\end{proof}

\subsection{Variation of canonical height:  proofs of the main theorems}   \label{final}
Assume that $a \in \bP^1(k)$ is totally Fatou for $f$, and let $D = D(F,A)$.  Proposition \ref{rational} implies that $D$ is a $\bQ$-divisor, so it remains to study properties of the functions $V_v$, defined in \eqref{V at v}, associated to this divsor $D$ on the curve $X$ at each place $v$ of the number field $K$.  

We begin by proving Theorem \ref{local dyn VCH}, which states that the functions $V_v$ are continuous on the Berkovich analytification $X^{an}_v$ at all places $v$.  This implies, in particular, the existence of a uniform bound $C_v$ so that $|V_v| \leq C_v$ at all points of $X(\Kbar)$.  (Recall that we have fixed an embedding of $\Kbar \hookrightarrow \bC_v$ for each place $v$.)  Towards proving Theorem \ref{dyn VCH}, we then find a finite set of places $S\subset M_K$ outside of which we have strong bounds on $V_v$, so that we can show the sum $\sum_{v \in M_K\setminus S} N_v \, V_v(t)$ is uniformly bounded on $X(\Kbar)$.  Combined with the bound $C_v$ for each place $v$, we obtain a uniform bound on the sum $\sum_{v \in M_K} N_v \, V_v(t)$, for all $t\in X(\Kbar)$.  Averaging over Galois orbits will complete the proof of Theorem \ref{dyn VCH}.

\begin{proof}[Proof of Theorem \ref{local dyn VCH}]  
Fix $\gamma \in \cS(F, A)$.  First assume that the pair $(f,a)$ is hole-avoiding at $\gamma$, as defined in \S\ref{hole-avoiding}.  Choose functions $\alpha, \beta \in k$ so that $\ord_\gamma \beta F = \ord_\gamma \alpha A = 0$.  This places us in the setting required for the results of Section \ref{continuity}.  For the function $g_v$ defined there, note that  
\begin{equation} \label{g - V}
	g_v(t) - V_v(t) = \frac{1}{d-1}\log|\beta_t \alpha_t^{d-1}|_v  +  \frac{1}{(2g+1)(d-1)} \ord_\gamma ( \beta \alpha^{d-1}) \log^+|\xi^\gamma(t)|_v
\end{equation}
on an $M_K$-neighborhood of $\gamma$, as a consequence of \eqref{adjustment}, and this difference is continuous at all places $v \in M_K$ and an $M_K$-bounded function.  

If the pair $(f,a)$ fails to be hole-avoiding at $\gamma$, then from Theorem \ref{B point}, we can find a change of coordinates $B \in \mathrm{GL}_2(k)$ and pass to iterates so that the pair $(Bf^nB^{-1}, B(f^m(a)))$ is hole-avoiding at $\gamma$.  From properties \eqref{conjugation} and \eqref{iterate} of the escape rates, we can replace the lifts $(F,A)$ with $(BF^nB^{-1}, BF^m(A))$, and these changes do not affect the computation of $V_v$ on an $M_K$-neighborhood of $\gamma$ (outside of $\gamma$ itself, where the specialization $B_\gamma$ may fail to be invertible), except to multiply it by $d^m$ at every place $v$.  So we can assume that $(f,a)$ is hole-avoiding at $\gamma$.  

We can apply Theorem \ref{uniform convergence} to conclude that $V_v$ is a continuous function on an $M_K$-neighborhood of $\gamma$, for every $v \in M_K$, and that it extends to a continuous function on the closure of this neighborhood in the Berkovich analytification of $X$, for each $v$.  This completes the proof of Theorem \ref{local dyn VCH}, because the continuity of $V_v$ -- when bounded away from the elements of $\cS(F,A)$ in $X_v^{an}$ -- is immediate from the definitions of the escape rates $G_{F_t, v}(A_t)$ and the local height functions for $h_D$.
\end{proof}

\begin{proof}[Proof of Theorem \ref{dyn VCH}]
Fix $\gamma \in \cS(F,A)$.  As in the proof of Theorem \ref{local dyn VCH}, it suffices to assume that $(f,a)$ is hole-avoiding at $\gamma$.  Choose functions $\alpha, \beta \in k$ so that $\ord_\gamma \beta F = \ord_\gamma \alpha A = 0$. Let $S_\gamma$ be a finite set of places of the number field $K$ so that the function in \eqref{g - V} vanishes on an $M_K$-neighborhood of $\gamma$ for all $v \in M_K \setminus S_\gamma$.  The function $V_v$ for the given pair $(F,A)$ then coincides with the function $V_v$ for the pair $(\beta F, \alpha A)$ for all $v \in M_K\setminus S_\gamma$ on an $M_K$-neighborhood of $\gamma$ and is equal to $g_v$ at these places.  

We can enlarge the finite set $S_\gamma$ so that Propositions \ref{good bound on g_v} and \ref{alpha} imply the existence of an $M_K$-quasiconstant $\mathfrak{a}(\gamma)$ for which 
\begin{equation}  \label{sum bound}
	|V_v(t)| = |g_v(t)| \leq  \mathfrak{a}_v(\gamma)
\end{equation}
for all $v \not\in S_\gamma$ and $t$ in an $M_K$-neighborhood of $\gamma$.  

Let $\mathfrak{U}$ be the union of these $M_K$-neighborhoods over all $\gamma \in \cS(F,A)$.  From Proposition \ref{F v bounds}, we know that there exists an $M_K$-constant $\mathfrak{c}$ so that 
\begin{equation} \label{F c}
 e^{-\mathfrak{c}_v}   ~ \leq  ~ \frac{\|F_t(z,w)\|_v}{\|(z,w)\|_v^d} ~ \leq ~  e^{\mathfrak{c}_v}
\end{equation}
for all $t \in X(\bC_v)$ outside of $\mathfrak{U}_v$ and all $v \in M_K$.   From Proposition \ref{global bound}, we can increase the $M_K$-constant $\mathfrak{c}$ so that 
\begin{equation} \label{A c}
	e^{-\mathfrak{c}_v}   ~ \leq  ~ \|A_t\|_v  ~ \leq ~  e^{\mathfrak{c}_v}
\end{equation}
for all $t\in X(\bC_v)\setminus\mathfrak{U}_v$.

Let $S \subset M_K$ be a finite set containing $S_\gamma$ for each $\gamma \in \cS(F,A)$ and containing all places for which $\mathfrak{c}_v \not=0$ and for which $\mathfrak{U}_v$ is not equal to the union $\bigcup_{\gamma \in \cS(F,A)} \{|\xi^\gamma|_v > 1\}$.  
Now fix $t\in X(\Kbar)$. If $t \not\in \mathfrak{U}_v$ at $v \not\in S$, we have 
\begin{align}\label{away}
V_{v}(t) =G_{F_t,v}(A_t) = 0
\end{align}
because $\|F^n_t(A_t)\|_v = 1$ for all $n$, from \eqref{F c} and \eqref{A c}. On the other hand, if $t \in \mathfrak{U}_v$ for $v \not\in S$, then we still have the bound
\begin{align}\label{near}
|V_{v}(t)| \leq \mathfrak{a}_v(\gamma)
\end{align}
from \eqref{sum bound}. Recalling the summability of the bounds in \eqref{sum bound} near each $\gamma \in \cS(F,A)$, inequalities \eqref{away} and \eqref{near} yield
	$$\sum_{v \not\in S} N_v\,  |V_{v}(t)| ~ \leq  ~ C := \sum_{\gamma\in \cS(F,A)} \sum_{v \not\in S} N_v \, \mathfrak{a}_v(\gamma)  < \infty,$$
for each $t\in X(\Kbar)$.

By the continuity of $V_v$ on $X_v^{an}$ for every $v \in M_K$, from Theorem \ref{local dyn VCH}, there is a constant $C_v$ for each $v \in S$ so that $|V_v(t)| \leq C_v$ for all $t \in X(\Kbar)$.  This gives 
	$$\sum_{v \in M_K} N_v \, |V_{v}(t)| ~ \leq  ~ C  + \sum_{v \in S} N_v \, C_v  < \infty$$
for all $t \in X(\Kbar)$.  It follows that, taking averages over the Galois orbit of $t$, we have 
	$$\frac{1}{|\Gal(\Kbar/K)\cdot t|}  \sum_{x \in \Gal(\Kbar/K)\cdot t} \sum_{v \in M_K} N_v \, |V_{v}(t)| ~ \leq  ~ C  + \sum_{v \in S} N_v \, C_v$$
for all $t \in X(\Kbar)$. This completes the proof of Theorem \ref{dyn VCH}.
\end{proof}

\bigskip
\section{Examples}
\label{example section}

In this final section, we present examples to illustrate some of the subtle phenomena that can arise for non-polynomial maps $f: \bP^1 \to \bP^1$, even in the simplest setting of degree $d = 2$, with $K = \bQ$ and $k = \bQ(t)$.  

\subsection{The difference of heights in Theorem \ref{dyn VCH} is bounded but not $M_K$-bounded.}  \label{all primes}  
We first present an example, already seen in Remark \ref{quasi adelic}, where the conditions of Theorem \ref{dyn VCH} hold but the functions $V_v$ of Theorem \ref{local dyn VCH} are nontrivial at infinitely many places $v$ of $K = \bQ$. A mechanism to construct many other such examples appears in \cite{Mavraki:Ye}. Consider
	$$f(z) = \frac{z(z+1)}{z+t}$$ 
defined over $k = \bQ(t)$.  Put 
	$$F(z,w) = (z(z+w), (z+tw)w),$$
so that $\cS(F)$ consists only of the three points $t=0, 1, \infty$ in $X = \bP^1$.  Let $a = 1$, and take $A = (1,1)$ so that $\cS(F,A) = \cS(F) = \{0, 1, \infty\}$.  The point $a$ is totally Fatou as a consequence of Theorem \ref{B point}, because the pair $(f,a)$ is hole-avoiding at all points of $\cS(F)$.  Indeed, at $t=0$, we have $F_0(z,w) = (z(z+w), zw)$ with hole at $z/w=0$ and orbit $f_0^n(a) = n+1$ for all $n\geq 0$.  At $t=1$, we have $F_1(z,w) = (z(z+w), (z+w)w)$ with hole at $z/w = -1$, and orbit $f_1^n(a) = 1$ for all $n\geq 0$.  Finally, at $t=\infty$, we can choose a new lift $F' = \frac1t F$ so that $(F')_\infty(z,w) = (0, w^2)$ with hole at $z/w = \infty$ and orbit $f_\infty^n(a) = 0$ for all $n \geq 1$.   

It follows from these computations that $D = D(F,A) = (\infty)$ is the divisor of degree 1 on $X = \bP^1$ supported at the point $t = \infty$.  This implies, in particular, that $\hat{h}_f(a) = 1$.

Fix a prime $p$ of $\bQ$.  To see that the function $V_p$ is nontrivial on $X$, it suffices to show that $V_p(0) \not=0$.   Let $A_n = F^n(A)$.  As explained in Remark \ref{quasi adelic}, we have $\|(A_n)_0\|_p = 1$ for all $n< p$, and $\|(A_n)_0\|_p < 1$ for all $n\geq p$.  As computed in \eqref{decreasing}, we know that $\|(A_n)_0\|_p^{1/2^n}$ is a decreasing sequence for all primes $p$, so that the $\alpha_p$ of Proposition \ref{alpha} (defined as $g_p(0)$ for the function $g_p(t) = \lim_{n\to\infty} 2^{-n} \log \|(A_n)_t\|_p$ in a $p$-adic neighborhood of $t=0$) is non-zero for all primes $p$.  Moreover, as explained in the proof of Theorem \ref{local dyn VCH}, we also have that $V_p(0) = g_p(0)$ and so $V_p(0) = \alpha_p < 0$ for all primes $p$.

\subsection{All known non-polynomial examples are totally Fatou}
Here we survey the results in the literature where the conclusions of Theorems \ref{dyn VCH} and \ref{local dyn VCH} were known for examples $f: \bP^1 \to \bP^1$ that are {\em not} polynomial maps (nor conjugate to a polynomial).  In every case, the points $a\in \bP^1(k)$ that were treated satisfy our totally Fatou hypothesis. 

The first example is the one presented in the Introduction, where the variation of canonical height $t\mapsto \hat{h}_{f_t}(p_t)$ for a family of Latt\`es maps $f_t$ -- those arising as quotients of endomorphisms of ellptic curves -- is known to differ from a Weil height for a $\bQ$-divisor on the base curve $X$ by a bounded amount, for any choice of $p \in \bP^1(k)$ \cite{Tate:variation}.  The continuity of the local contributions $V_v$, as defined in Theorem \ref{local dyn VCH}, was shown by Silverman in \cite{Silverman:VCHII}.   Also as mentioned in the Introduction, it is well known that all points are totally Fatou for these maps; see, e.g., the computation of the Berkovich Julia set in \cite[\S5]{FRL:ergodic}. Alternatively, note that the existence of a N\'eron model forces all points to be hole-avoiding in appropriate coordinates.

In \cite{GHT:rational}, the authors prove Theorem \ref{dyn VCH} for rational maps $f$ defined over $k = K(X)$ for a curve $X$ and points $c\in \bP^1(k)$, under the assumptions that 
\begin{enumerate}
\item  there exists $t_0 \in X$ so that the map $f$ has good reduction at all $t \not= t_0$;
\item $f$ has a super-attracting fixed point at $z = \infty$; and 
\item the point $c$ satisfies $\ord_{t_0} f^n(c) \to -\infty$.
\end{enumerate}
Condition (3) implies that $c$ is in the basin of attraction of the super-attracting fixed point at $\infty$, so it is clearly Fatou at $t_0$.  (The hypothesis (3) is stated in \cite[Theorem 5.4]{GHT:rational} as $\{\deg f^n(c): n\geq 0\}$ is unbounded, but for a notion of degree defined in their Section 5 on the regular functions on $X\setminus\{t_0\}$ and extended to $k$ after equation (5.4).)

In \cite{Ghioca:Mavraki:variation}, the authors studied maps of the form 
	$$f(z) = \frac{z^d + t}{z}$$
over $k = \Qbar(t)$, for $d \geq 3$, and they prove Theorem \ref{dyn VCH} for all points $a \in \bP^1(k)$.  (The map $f$ for $d = 2$ is isotrivial, making the theorem true but much easier.)  In this example, the point $z=\infty$ is a super-attracting fixed point, and there are two places of bad reduction, at $t = 0$ and $t=\infty$.  All points $a \in \bP^1(k)$ are totally Fatou.  Indeed, at $t=0$, the reduction is $f_0(z) = z^{d-1}$ with only hole at $z=0$.  So the only points we need to consider are those which vanish at $t=0$. But for any integer $m \geq 1$, if $\ord_0 a = m$, then $\ord_0 f(a) = 1-m \leq 0$, so $f(a)$ which will no longer specialize to 0 at $t=0$; this implies that the pair $(f, f(a))$ is hole-avoiding at $t=0$ for all $a \in \bP^1(k)$.  At $t=\infty$, a computation shows that if $\ord_\infty a = r <  0$, then $\ord_\infty f(a) = (d-1) r$; iterating implies that $f^n(a) \to \infty$ in the $\infty$-adic topology, so the point $a$ will be Fatou at $t=\infty$.  Moreover, if $\ord_\infty a = r \geq 0$, then $\ord_\infty f(a) = -r-1 < 0$, and again $a$ is Fatou at $\infty$.

In \cite{DWY:QPer1}, the authors consider
	\begin{align}\label{per1}
	f(z) = \frac{\lambda z}{z^2 + t z + 1}
	\end{align}
for a fixed $\lambda \not=0$ in $\Qbar$, defined over $k = \Qbar(t)$, having a fixed point of multiplier $\lambda$.  For $\lambda$ not a root of unity or for $\lambda=1$, the result of Theorem \ref{dyn VCH} is obtained there for the critical points $c_\pm = \pm 1$ of $f$.  The critical points will be totally Fatou for any choice of $\lambda$.  It suffices to check the dynamics of $f$ at $t=\infty$.  For $\lambda = 1$, we can conjugate $f$ by $B(z) = 1/(tz)$ so the new map $z + 1 + 1/(t^2z)$ specializes to $z \mapsto z+1$ with hole at $z=0$, and the critical values in the new coordinate system $B(f(c_\pm)) = (\pm 2+t)/t$  specialize to $z=1$, so the pairs $(f, f(c_\pm))$ are seen to be hole-avoiding in the new coordinate system.  For $\lambda$ not a root of unity, the map $f$ can be conjugated to a map that specializes to $z \mapsto \lambda \, z$ with a hole at $z = 1$, and so that the critical values $f(c_\pm)$ in the new coordinates will specialize to $z = \lambda$.  Again the pairs $(f, f(c_\pm))$ are hole-avoiding in the new coordinate system.  These facts appear in the proof of \cite[Proposition 2.2]{DWY:QPer1} and in \cite[\S5]{D:moduli2}; these cases are also covered by \cite[Lemma 3.4]{Kiwi:quad}.  Finally, in \cite{Mavraki:Ye}, the authors obtain the result of Theorem \ref{dyn VCH} for the maps $f$ of the form \eqref{per1} when $\lambda$ is a root of unity and for a large class of points $c\in\Qbar(t)$ satisfying a hole-avoiding condition at the place of bad reduction $t=\infty$. This includes in particular maps of the form \eqref{per1} with $c_\pm=\pm1$. 
As explained in Theorem \ref{B point} above, this means that the points considered are totally Fatou.

\subsection{Julia points:  irrational local heights and $\R$-divisors}  \label{Julia example 1}
This next example is a map of degree 2 defined over the field $k = \bQ(t)$, with the property that all points with infinite orbit that lie in a local non-archimedean Julia set at the place $t=0$ of $k$ will have an irrational local canonical height.  If such a point can be algebraic over $k$, it would show that the conclusion of Theorem \ref{dyn VCH} would fail for Julia points, as stated; the divisor $D$ should be an $\R$-divisor on the curve $X$.  Set
\begin{equation} \label{Julia 1 f}
	f(z) = \frac{(t^2+t+1)z^2 + tz+ t^2-1}{(2t^2+t)z + t}.
\end{equation}
This map has fixed points at $z=1$ (with multiplier $1/t$) and at $z=-1$ (with multiplier $1/t^2$), and at $z=\infty$.  At the place $t=0$ of $k$, these fixed points at $\pm 1$ are repelling, and the fixed point at $\infty$ is attracting (but not super-attracting).  The Julia set in $\bP^{1,an}_{t=0}$, defined over the field $\bL$ of formal Puiseux series in $t$, is a Cantor set of Type I points, and $f$ is conjugate to the full 2-shift \cite[Theorem 3(1)]{Kiwi:quad}.  All points outside of the Julia set will tend to $\infty$ under iteration.  This map $f$ exhibits a polynomial-like behavior near its Julia set, and it can be computed that the Julia set is a subset the formal completion $k_0 := \bQ[[t]]$.  But this example is not {\em strongly} polynomial-like, in the sense of \cite[Theorem 1.5]{DG:rationality}, because the multipliers at the two repelling fixed points have distinct absolute values.

We can compute local canonical heights over the field $k$ with the procedure described in \cite{DG:rationality}.  
In homogeneous coordinates, put
	$$F(z,w) = ((t^2+t+1)z^2 + tzw + (t^2 - 1) w^2, ~ (2 t^2 + t) z w + t w^2).$$
At $t=0$, we have $F_0(z,w) = (z^2-w^2, ~0)$, and the Julia set is contained in the hole-directions $\pm 1$ from the Gauss point $\zeta_G$, i.e., in the union of the two disks $D_\pm = \{z \in \bL:  |z - (\pm 1)|_0 < 1\}$. 
The conjugacy between $f$ on its Julia set and the shift map on 2 symbols is given by the itinerary of a point as it moves between $D_+$ and $D_-$.  For a point $a \in k_0$ with lift $A \in (k_0)^2\setminus\{(0,0)\}$, a sequence of orders is defined by 
\begin{equation} \label{tau order}
	\tau_n := \ord_{t=0} F^n(A) = 2 \, \tau_{n-1} + \sigma_{n-1}
\end{equation}
so that 
	$$G_{F,0}(A) = -\lim_{n\to\infty} \frac{\tau_n}{2^n} = - \tau_0 - \sum_{n=1}^\infty \frac{\sigma_{n-1}}{2^n}.$$
From the formula for $F$, we can compute that $\sigma_n = 1$ if $f^n(a) \in D_+$ and $\sigma_n = 2$ if $f^n(a) \in D_-$, for all $n\geq 0$.  Because of the conjugation to the shift map, we see that the sequence $\{\sigma_n\}$ is eventually periodic if and only if the point $a$ is eventually periodic.  Therefore, $G_{F,0}(A)$ (and so also any presentation of the geometric local canonical height $\hat{\lambda}_{f,0}(a)$ at $t=0$) is irrational for all Julia points with infinite orbit.

\begin{remark}
The function $f$ of \eqref{Julia 1 f} is conjugate to $z \mapsto \frac{t^2 z^2+z}{t z + t^2}$, in a standard normal form for quadratic rational maps, with fixed points at 0 and $\infty$ of specified multipliers (in this case, having multiplier $1/t^2$ at 0 and $1/t$ at $\infty$).  We then moved the two repelling fixed points to $1$ and $-1$ and the attracting fixed point to $\infty$.  
\end{remark}

\subsection{Julia points with divergent escape rates}  \label{divergent example}
Our final example is
\begin{equation} \label{Julia 2 f}
	f(z) = \frac{z^2 + (t^2-t-1)z  - t^3 - 2t^2 + t}{z - t^2 - 1},
\end{equation}
defined over the field $k = \bQ(t)$, at the place $\gamma$ corresponding to $t=0$.  As for the example \eqref{Julia 1 f}, all Julia points at $t=0$ with infinite orbit for $f$ will have an irrational local canonical height at $t=0$.  This can be seen from the proof of \cite[Theorem 1.3]{DG:rationality}, because this $f$ is conjugate to the map $z \mapsto \frac{(z+1)(z-t)}{z+t}$ studied there, combined with an identification of the Julia set with the shift on 2 symbols \cite[Proposition 4.2]{Kiwi:quad}.

We construct (formal) points $a \in \bQ[[t]]$ in the Julia set of $f$ at $t=0$ so that the sequence of functions \eqref{g_n} that define $V_\infty$ (at the archimedean place) will diverge at $t=0$.  We do not know if the points $a$ we construct can be algebraic over $k$, nor even if the series will converge on a disk around $t=0$.  We use these examples to illustrate some of the features of Julia points that do not arise for the Fatou points.  

\begin{remark} \label{non compact}
As we shall see, taking any unbounded sequence of positive integers $\{m_k\}_{k \geq 0}$ in the construction below, this example also shows that the orbits of points in $\bP^1(k_\gamma)$ can have non-locally-compact closures.  This is distinct from what happens for polynomials; compare \cite[Theorem 3]{Favre:Gauthier:continuity}.
\end{remark}

More precisely, we construct examples so that the sequence 
	$$\alpha_n := \frac{1}{2^n} \log\|(A_n)_0\|_v,$$
as defined and studied in Proposition \ref{alpha}, will diverge to $-\infty$ at the place $v = \infty$.  In particular, this would show that -- if the point $a$ defines a convergent series in $\bQ[[t]]$ -- the conclusion of Theorem \ref{uniform convergence} would fail.  That is, the sequence of functions
	$$g_n(t) := \frac{1}{d^n} \log\|(A_n)_t\|$$
would converge, locally uniformly on a punctured disk around $t=0$, and we know that the limit function $g(t)$ must be bounded by $o(\log|t|)$ as $t\to 0$ \cite[Proposition 3.1]{D:stableheight}.  But the convergence to $g$ would not be uniform in a neighborhood of $t=0$.

For the construction, note first that $f$ specializes to the identity transformation $f_0(z) = z$ at $t=0$ with a hole at $z=1$.  The Berkovich Julia set is contained in the direction $z = 1$ from the Gauss point, and all Julia points have the form $1 + m \, t + O(t^2)$ for some integer $m \geq 0$.  We have 
	$$f(1 + w \, t + O(t^2)) = 1 + (w-1) \, t + O(t^2)$$
for all $w \not= 0$, and 
	$$f(1 + w\, t^2 + O(t^3)) = 1 + \frac{1+w}{1-w}\, t + O(t^2)$$
for all $w \not= 1$.  

For any sequence of positive integers $\{m_k\}_{k \geq 0}$, there is a unique point $a \in \bQ[[t]]$ so that 
	$$a = 1 + m_0 \, t + O(t^2)$$
and 
	$$f^{m_0+\cdots+m_{k-1} + k}(a) = 1 + m_k \, t + O(t^2)$$
for all $k\geq 1$ \cite[Theorem 1.3]{DG:rationality}, \cite[Proposition 4.2]{Kiwi:quad}. Let
	$$F(z,w) =   \left( z^2 + (t^2-t-1)zw  +(- t^3 - 2t^2 + t)w^2, \, zw - (t^2 + 1)w^2\right)$$
be a lift of $f$.  Set 
	$$A = (a, 1) = (1 + m_0 \, t + O(t^2), \, 1)$$
for the $a$ associated to a given sequence $\{m_k\}_{k\geq 0}$. For each $n\geq 1$, we set
	$$A_n = t^{-\sigma_{n-1}} F(A_{n-1}),$$
where $\sigma_{n-1}$ is chosen so that $\ord_{t=0} A_n = 0$, as above in \eqref{tau order}.  In fact, $\sigma_n = 1$ whenever $f^n(a) = 1 + w t + O(t^2)$ for $w \not=0$, and $\sigma_n = 2$ for $f^n(a) = 1 + O(t^2)$.  For each $n \geq 1$, we let $(A_n)_0$ denote the specialization at $t=0$, and set 
	$$\alpha_n = \frac{1}{2^n} \log\|(A_n)_0\|$$
in the archimedean norm.   

We now show that, by choosing the sequence $\{m_k\}_{k\geq 0}$ to grow to infinity sufficiently fast, we can have $\liminf_{n\to\infty} \alpha_n = -\infty$. For a lift $(y_0 + x_1 t + O(t^2), \, y_0 + y_1 t + O(t^2))$ of the point $1 + m\, t + O(t^2)$, with $m > 0$, we have $(x_1-y_1)/y_0 = m$, and this gives 
\begin{multline*}
F \left( y_0 + x_1 t + O(t^2), \, y_0 + y_1 t + O(t^2)\right) = t \, \left( y_0 (x_1-y_1), \, y_0 (x_1-y_1)\right) + O(t^2) \\ =  t \, \left( m\, y_0^2 , \, m\, y_0^2 )\right) + O(t^2)
\end{multline*}
for $m\not= 0$.  Now suppose we take any lift $(y_0 + y_1 t + x_2 t^2 + O(t^3), \, y_0 + y_1 t + y_2 t^2 + O(t^3))$ with $y_0 \not= 0$ of a point $p = 1 + c\, t^2 + O(t^3)$, with $c = \frac{m-1}{m+1}$ for $m \geq 1$ so that $f(p) = 1 + m\, t + O(t^2)$.  Then we must have $\frac{x_2-y_2}{y_0} = \frac{m-1}{m+1}$, and this gives
\begin{multline*}
F \left(y_0 + y_1 t + x_2 t^2 + O(t^3), \, y_0 + y_1 t + y_2 t^2 + O(t^3)\right) \\
 = t^2 \, \left( y_0 (x_2 -y_0 - y_2), \, y_0 (x_2 -y_0 - y_2)\right) + O(t^3) \\ =  t^2 \, \left( \frac{-2 y_0^2}{m+1} , \, \frac{-2 y_0^2}{m+1} \right) + O(t^3).
\end{multline*}
Iterating the point $A$, we find that 
	$$(A_1)_0 = (m_0, \, m_0)$$
	$$(A_2)_0 = \left(m_0^2(m_0-1), \, m_0^2(m_0-1)\right)$$
all the way to 
	$$(A_{m_0})_0 = \left( m_0^{2^{m_0-1}} (m_0-1)^{2^{m_0-2}} \cdots 1, \, m_0^{2^{m_0-1}} (m_0-1)^{2^{m_0-2}} \cdots 1\right) =:  (Y_{m_0}, Y_{m_0})$$
so that 
	$$\alpha_{m_0} = \frac{1}{2^{m_0}} \log|Y_{m_0}| = \sum_{j=1}^{m_0} \frac{1}{2^j} \log(m_0-j+1).$$
The point $A_{m_0}$ is a lift of $f^{m_0}(a)= 1 + 0\,t + \frac{m_1-1}{m_1+1} t^2 + O(t^3)$, so that 
	$$(A_{m_0+1})_0 = \left( \frac{-2}{m_1+1} Y_{m_0}^2, \, \frac{-2}{m_1+1} Y_{m_0}^2\right). $$
Thus,
	$$\alpha_{m_0+1} = \alpha_{m_0} + \frac{1}{2^{m_0+1}} \log\left( \frac{2}{m_1+1} \right).$$
Note that this $\alpha_{m_0+1}$ can be made as negative as desired by choosing $m_1 \gg m_0$.  Continuing to iterate, we have 
	$$\alpha_{m_0 + 1 + s} = \alpha_{m_0+1} + \sum_{j=1}^s \frac{1}{2^{m_0+1+j}} \log(m_1 - j + 1)$$
for all $s = 1, \ldots, m_1$.  Then
	$$\alpha_{m_0 + m_1 + 2} = \alpha_{m_0 + m_1 + 1} + \frac{1}{2^{m_0 + m_1+2}} \log\left( \frac{2}{m_2+1} \right).$$
Again, we can make $\alpha_{m_0 + m_1 + 2}$ as negative as desired by choosing $m_2 \gg m_1$.  We see that the pattern continues, and so, by choosing the sequence $\{m_k\}_{k\geq 0}$ to grow to infinity very fast, we conclude that the sequence $\{\alpha_n\}_{n\geq 0}$ is unbounded from below.



\bigskip \bigskip

\def\cprime{$'$}

\end{document}